\newtheorem{theorem}{Theorem}[section]
\newtheorem{lemm}[theorem]{Lemma}
\newtheorem{prop}[theorem]{Proposition}
\newtheorem{coro}[theorem]{Corollary}
\newtheorem{conj}[theorem]{Conjecture}
\theoremstyle{definition}
\newtheorem{defi}[theorem]{Definition}
\theoremstyle{remark}
\newtheorem{remark}[theorem]{Remark}
\numberwithin{equation}{section}
\def\b{\underline}
\def\dim{\hbox{dim}}
\newfont{\df}{eufm10}
\def\dim{\hbox{\rm dim}\,}
\def\ker{\hbox{Ker}\,}
\def\span{\hbox{\rm span}\,}
\def\im{\hbox{\rm Im}\,}
\numberwithin{equation}{section}
\begin{document}

\title[Loewy filtration and quantum de Rham cohomology]
{Loewy Filtration and Quantum de Rham Cohomology\\ over Quantum
Divided Power Algebra}

\author[Gu]{Haixia Gu}
\address{Department of Mathematics, East China Normal University,
Minhang Campus, Dong Chuan Road 500, Shanghai 200241, PR
China}\email{alla0824@126.com}

\author[Hu]{Naihong Hu$^\star$}
\address{Department of Mathematics, East China Normal University,
Min Hang Campus, Dong Chuan Road 500, Shanghai 200241, PR China}
\email{nhhu@math.ecnu.edu.cn}
\thanks{$^\star$N.H., Corresponding Author,
supported in part by the NNSF (Grant 11271131) and the FUDP from the
MOE of China}

\subjclass{Primary 17B10, 17B37, 20G05, 20G42, 81R50; Secondary 14F40, 81T70}
\date{April 3, 2012}

\keywords{quantum divided power algebra, Loewy filtration, rigidity,
$q$-differentials, quantum Grassmann algebra, quantum de Rham
cohomology.}

\begin{abstract}
The paper explores the indecomposable submodule structures
of quantum divided power algebra $\mathcal{A}_q(n)$ defined in \cite{HU} and its truncated objects $\mathcal{A}_q(n,
\bold m)$. An ``intertwinedly-lifting"
method is established to prove the indecomposability of a module when its socle is non-simple.
The Loewy filtrations are described for all homogeneous subspaces
$\mathcal{A}^{(s)}_q(n)$ or $\mathcal{A}_q^{(s)}(n, \bold m)$, the Loewy layers and dimensions are determined. The rigidity of these indecomposable modules is proved. An interesting combinatorial identity is derived from our realization model for a class of indecomposable $\mathfrak{u}_q(\mathfrak{sl}_n)$-modules.
Meanwhile, the quantum Grassmann algebra
$\Omega_q(n)$ over $\mathcal{A}_q(n)$ is constructed, together with the quantum de Rham
complex $(\Omega_q(n), d^\bullet)$ via defining the appropriate
$q$-differentials, and its subcomplex $(\Omega_q(n,\bold m),
d^\bullet)$. For the latter, the corresponding quantum
de Rham cohomology modules are decomposed into the direct sum of some sign-trivial
$\mathfrak{u}_q(\mathfrak{sl}_n)$-modules.
\end{abstract}

\maketitle
\section{Introduction}

\noindent{\it 1.1.} For the generic parameter $q\in\mathbb C^*$, it
is well-known that the finite dimensional representation theory of
quantum groups $U_q(\mathfrak g)$ is essentially the same as that of
the complex semisimple Lie algebras $\mathfrak g$ (see the
independent work in 1988 by Lusztig \cite{Lu1} and Rosso \cite{Ro}).
The representation theory of quantum groups $U_q(\mathfrak g)$ at
roots of unity was established in the early 90s by many authors (see
Anderson-Polo-Wen \cite{APW}, DeConcini-Kac-Procesi \cite{CK},
\cite{CKP}, \cite{CKP2}, \cite{CDP}, Lusztig \cite{Lu2}, \cite{Lu3},
\cite{Lu4},  Andersen-Janzten-Soergel \cite{AJS}, etc.). In recent years, another exciting progress has been made
towards geometric representation theory (eg. \cite{BK1}, \cite{BK2},
\cite{BG}, \cite{C}, \cite{CCC}, \cite{CPRR}, \cite{F}, etc.). The picture
looks much close to the modular case (see \cite{AJS}, \cite{CK}, \cite{CDP},
\cite{Lu2}, \cite{Lu3}, \cite{Lu4}, \cite{BG}, \cite{BMR}, \cite{MR}
and references therein). 
Even for the restricted quantum group
$\mathfrak{u}_q(\mathfrak{sl}_2)$, there has been drawing more
attention to the category of finite-dimensional modules since the early 90s up to now, for instance, the work of
DeConcini-Kac \cite{CK}, 
Chari-Premet \cite{CP}, Suter \cite{Su},
Xiao \cite{XJ}, and recently, Kondo-Saito \cite{KS}, etc. Their main
problems focus on determining all simple modules of
$\mathfrak{u}_q(\mathfrak{sl}_2)$; classifying and constructing the
restricted indecomposable modules of
$\mathfrak{u}_q(\mathfrak{sl}_2)$; decomposing
$\mathfrak{u}_q(\mathfrak{sl}_2)$ as principal indecomposable
modules (PIMs) and decomposing the tensor product of a PIM and a
module as a direct sum of PIMs; determining all finite dimensional
indecomposable representations of $\mathfrak{u}_q(\mathfrak{sl}_2)$;
exploring the tensor product decomposition rules for all
indecomposable modules of $\mathfrak{u}_q(\mathfrak{sl}_2)$ with $q$
being $2p$-th root of unity ($p\ge 2$), respectively, etc.

\medskip
\noindent{\it 1.2.} 
In the representation theory of quantum groups at roots of unity, it is often assumed that the
parameter $q$ is a primitive $\ell$-th root of unity with $\ell$ an odd prime. Recently, there has
been increasing interest in the cases where $\ell$ is an even integer. For example, in the study of knot invariants (\cite{MN}), or in logarithmic conformal field theories where Feigin et al. (\cite{FGST, GSTF}) make a new correspondence between logarithmic conformal field theories based on the so-called triplet VOA $W(p)$ and representation theory of the restricted quantum enveloping algebras. More precisely,
they gave the following 
\begin{conj} $($\cite{GSTF}$)$ Let $p\ge 2$, ${\mathfrak u}_q(\mathfrak{sl}_2)$ be the restricted quantum enveloping algebra at $2p$-th roots of unity. As a braided quasitensor category, $W(p)$-{\bf mod} is equivalent
to ${\mathfrak u}_q(\mathfrak{sl}_2)$-{\bf mod}. Here ${\mathfrak u}_q(\mathfrak{sl}_2)$-{\bf mod} denotes the category of finite-dimensional ${\mathfrak u}_q(\mathfrak{sl}_2)$-modules.
\end{conj}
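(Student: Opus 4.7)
The plan is to attack the conjecture in three progressively refined stages: an abelian equivalence, a monoidal upgrade, and finally a braided refinement. For the abelian stage, I would match simples and their projective covers block-by-block. On the quantum side, $\mathfrak{u}_q(\mathfrak{sl}_2)$ at a primitive $2p$-th root of unity has $2p$ simple modules $X^{\pm}_s$ ($1\le s\le p$), and the principal indecomposable modules $P^{\pm}_s$ admit explicit Loewy series that can be extracted by exactly the ``intertwinedly-lifting'' and socle/head filtration techniques developed earlier in this paper for $\mathcal{A}_q(n)$-modules. On the triplet side, the $2p$ simples $\mathcal{X}^{\pm}_s$ of $W(p)$ arise as subquotients of Feigin--Fuks type modules, and their projective covers are produced via Zhu's algebra. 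Identifying the basic algebras $\mathrm{End}\bigl(\bigoplus_s P^{\pm}_s\bigr)$ on the two sides as the \emph{same} finite-dimensional $\mathbb{C}$-algebra would yield a Morita equivalence, and hence an abelian equivalence $\Phi$ from $W(p)$-{\bf mod} to $\mathfrak{u}_q(\mathfrak{sl}_2)$-{\bf mod}.

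Next I would upgrade $\Phi$ to a monoidal functor. The tensor product on $W(p)$-{\bf mod} is furnished by the Huang--Lepowsky--Zhang vertex tensor category machinery, realized through $P(z)$-tensor products and spaces of conformal blocks on $\mathbb{P}^1\setminus\{0,z,\infty\}$. On the quantum side the standard coproduct of $\mathfrak{u}_q(\mathfrak{sl}_2)$ decomposes tensor products of PIMs into sums of PIMs and simples via the rules worked out by Chari--Premet, Suter, Xiao and Kondo--Saito. Matching the two fusion rings by direct character comparison, and then constructing coherent associator and unit constraints, should promote $\Phi$ to a tensor equivalence. The rigidity properties of the indecomposable modules established in the present paper are exactly the kind of input needed to rigidify the tensor category on the $\mathfrak{u}_q(\mathfrak{sl}_2)$ side.

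For the braided refinement I would compare a universal $R$-matrix on the quantum side with the monodromy of four-point conformal blocks on the CFT side. A technical subtlety is that $\mathfrak{u}_q(\mathfrak{sl}_2)$ at an even order root of unity is not strictly quasitriangular, so one must pass either to a Drinfeld double or to an explicit ribbon quasi-Hopf extension $\bar{\mathfrak{u}}_q(\mathfrak{sl}_2)$ to realize a genuine braiding. The CFT braiding comes from the monodromy representation of the braid group $B_3$ acting on three-point conformal blocks, and equating it with the quantum $R$-matrix reduces to the explicit solution of Knizhnik--Zamolodchikov-type equations on the configuration space of $\mathbb{P}^1$.

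The hardest step is undoubtedly the braided stage. Already the monoidal stage is substantial, since $W(p)$-{\bf mod} is non-semisimple, so that the mere existence and right-exactness of the vertex tensor product must be imported as a deep theorem of Huang--Lepowsky--Zhang. The braided comparison is strictly more demanding: the correct $R$-matrix at even roots of unity involves a quasi-Hopf twist, and matching the associated analytic monodromy data against the algebraic braiding requires a non-semisimple logarithmic analogue of the Kazhdan--Lusztig correspondence. I expect the principal obstacle to be the construction of an intermediate ribbon quasi-Hopf structure on $\bar{\mathfrak{u}}_q(\mathfrak{sl}_2)$ whose braiding and twist exactly reproduce the monodromy and conformal spin data of $W(p)$; without such an intermediary the two braidings cannot be directly compared.
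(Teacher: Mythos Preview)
The statement you are attempting to prove is labelled \emph{Conjecture} in the paper, and the paper does not prove it. It is cited from \cite{GSTF} as background motivation; the paper's own contributions concern the Loewy structure of $\mathcal{A}_q^{(s)}(n,\bold m)$ and quantum de Rham cohomology, not the FGST correspondence itself. So there is no ``paper's own proof'' against which to compare your proposal.

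More importantly, the paper explicitly flags that the conjecture \emph{as stated} cannot hold: immediately after Theorem~1.2 it remarks that $\mathfrak{u}_q(\mathfrak{sl}_2)$-\textbf{mod} ``is not a braided tensor category if $p\ge 3$ (since $\mathfrak{u}_q(\mathfrak{sl}_2)$ has no universal $R$-matrices for $p\ge 3$)'' and that ``Conjecture~1.1 needs to be modified''. You correctly identify this as the hardest step, but you frame it as a ``technical subtlety'' to be repaired by passing to a quasi-Hopf extension $\bar{\mathfrak{u}}_q(\mathfrak{sl}_2)$. That is not a proof of the conjecture; it is a replacement of one side of the conjecture by a different category. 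Your abelian stage is essentially the content of Theorem~1.2 (Nagatomo--Tsuchiya), which the paper cites as already established, and your monoidal stage overlaps with the Kondo--Saito work also cited; neither is a contribution of the present paper. The genuine gap in your plan is therefore not a missing lemma but a category mismatch: the braided stage cannot be completed for $\mathfrak{u}_q(\mathfrak{sl}_2)$-\textbf{mod} itself, and any fix changes the statement being proved.
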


They also proved the conjecture for $p=2$. After the above conjecture, Tsuchiya and Nagatomo
proved 
\begin{theorem} 
$($\cite{NT}$)$  As abelian categories, these are equivalent for any $p\ge 2$.
\end{theorem}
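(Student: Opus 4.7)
The plan is to reduce both sides to modules over a single finite-dimensional associative algebra and to exhibit an explicit algebra isomorphism between them. On the quantum side, since $\mathfrak u_q(\mathfrak{sl}_2)$ is a finite-dimensional Hopf algebra (with $q$ a primitive $2p$-th root of unity), $\mathfrak u_q(\mathfrak{sl}_2)$-{\bf mod} is automatically equivalent to $B$-{\bf mod}, where $B$ denotes the basic algebra of $\mathfrak u_q(\mathfrak{sl}_2)$. On the vertex operator algebra side the goal is to realize $W(p)$-{\bf mod} as $A$-{\bf mod} for a comparably small finite-dimensional algebra $A$ and then to prove $A\cong B$.

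First I would compute $B$ explicitly, using the known structure of $\mathfrak u_q(\mathfrak{sl}_2)$ at a $2p$-th root of unity. The classification of simples gives $2p$ isomorphism classes $X^{\pm}_s$ with $1\le s\le p$; the Steinberg-type simples $X^\pm_p$ are already projective, whereas the remaining $2(p-1)$ simples have non-semisimple projective covers whose Loewy layers are well documented (see e.g.\ \cite{CK}, \cite{CP}, \cite{Su}, \cite{XJ}, \cite{KS}). Reading off this data yields the Gabriel quiver of $B$: two isolated vertices (corresponding to $X^\pm_p$) together with two oriented strings of $p-1$ vertices each, subject to the relations that any two consecutive equi-oriented arrows compose to zero and that the two length-two loops at each interior vertex coincide up to a scalar. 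This presents $B$ as a specific bound quiver algebra of biserial type.

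For the VOA side, the plan is to use the higher Zhu algebras $A_N(V)$: granted $C_2$-cofiniteness of $W(p)$, an equivalence $W(p)$-{\bf mod}$\,\simeq\,A_N(W(p))$-{\bf mod} holds for $N$ sufficiently large. One then enumerates the $2p$ simple $W(p)$-modules by their lowest conformal weights, constructs logarithmic projective covers via Nahm--Gaberdiel--Kausch fusion with the standard generator, and reads off their composition series to recover the same quiver with the same relations; matching with $B$ is then a finite combinatorial check. The main obstacle is precisely the non-rationality of $W(p)$: most of the standard Zhu-algebra machinery was developed in the semisimple setting and must be genuinely refined to control the non-split extensions already present in $A_0(W(p))$ and to establish that some $A_N(W(p))$ is Morita equivalent to the basic algebra of $W(p)$-{\bf mod}. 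Once that VOA-theoretic infrastructure is in place, the remaining portion of the argument is essentially formal.
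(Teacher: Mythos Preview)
The paper does not prove this theorem; it merely quotes it as a result of Nagatomo and Tsuchiya \cite{NT} for background and motivation in the introduction. There is therefore no ``paper's own proof'' to compare your proposal against. Your task was to reproduce the argument the paper gives, and the paper gives none --- it simply attributes the statement and moves on.

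That said, your outline is broadly in the spirit of how such abelian equivalences are established: exhibit both categories as module categories over explicit finite-dimensional algebras and match them. The quantum-group half is standard and your description of the basic algebra $B$ is essentially correct. The VOA half, however, is where all of the genuine difficulty lies, and your proposal acknowledges but does not resolve it: $C_2$-cofiniteness of $W(p)$, finiteness and Morita control of the higher Zhu algebras in the non-rational setting, and the construction and Loewy analysis of the logarithmic projective covers are each substantial theorems in their own right (and comprise the bulk of \cite{NT}). So as a proof this is a sketch of a strategy with the hard analytic and representation-theoretic core deferred; as a comparison with the present paper, there is simply nothing to compare.
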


These work motivated the investigations of the ``quantum group-side" of the FGST's correspondence, in particular, as tensor categories, see Kondo-Saito (\cite{KS}) and Semikhatov (\cite{S}). Note that ${\mathfrak u}_q(\mathfrak{sl}_2)$-{\bf mod} has a structure of a rigid tensor category, but it is not a braided tensor category if $p\ge 3$ (since ${\mathfrak u}_q(\mathfrak{sl}_2)$ has no universal $R$-matrices for $p\ge 3$). Kondo-Saito's main result is to determine indecomposable decomposition of all tensor products of indecomposable ${\mathfrak u}_q(\mathfrak{sl}_2)$-modules in explicit formulas. These also suggest that Conjecture 1.1 needs to be modified; although $W(p)$-{\bf mod} and ${\mathfrak u}_q(\mathfrak{sl}_2)$-{\bf mod} are equivalent as abelian categories by Theorem 1.2, their natural tensor structures do not agree with each other. 

On the other hand, Hu \cite{HU} first defined the quantum divided power
algebras $\mathcal{A}_q(n)$ and the restricted quantum divided power
subalgebras $\mathcal{A}_q(n, \bold 1)$ as
$\mathfrak{u}_q(\mathfrak{sl}_n)$-module algebras by defining the
appropriate $q$-derivations, and thereby provided a realization
model for some simple modules with highest weights
$(\ell{-}1{-}s_i)\lambda_{i-1}+s_i\lambda_i$ $(0\le s_i<\ell$).
Recently, Semikhatov \cite{S} also exploited the divided-power quantum
plane $\underline{\mathbb C}_q$ that is the rank $2$ quantum
divided power algebra $\mathcal{A}_q(2)$ and its
$\mathfrak{u}_q(\mathfrak{sl}_2)$-module algebra realization to
derive an explicit description of the indecomposable decompositions
of $(\underline{\mathbb C}_q)_{(np-1)}$ and of the space of
$1$-forms $(\Omega_q^1)_{(np-1)}$ for the Wess-Zumino de Rham
complex on $\underline{\mathbb C}_q$ (at $q$ a $2p$-th root of
$1$). 

Anyway, up to now, the study for the tensor category ${\mathfrak u}_q(\mathfrak{sl}_2)$-{\bf mod} is sufficient enough and perfect.
A natural question is to ask what about the tensor category ${\mathfrak u}_q(\mathfrak{sl}_n)$-{\bf mod}, for $n>2$.

\medskip
\noindent{\it 1.3.} In contrast to the generic case, the category ${\mathfrak u}_q(\mathfrak{sl}_n)$-{\bf mod} of
finite dimensional $\mathfrak{u}_q(\mathfrak{sl}_n)$-modules is
non-semisimple. So in this case it is necessary to pay more
attention to studying indecomposable modules. While, category ${\mathfrak u}_q(\mathfrak{sl}_n)$-{\bf mod} for $n>2$ is more complicated than ${\mathfrak u}_q(\mathfrak{sl}_2)$-{\bf mod}, as witnessed by a Theorem of Feldvoss-Witherspoon (\cite{FW}) stating that small quantum groups of rank at least two are wild, which was a conjecture of Cibils (\cite{C}), meanwhile, ${\mathfrak u}_q(\mathfrak{sl}_2)$ is known to be tame (see \cite{Su, XJ}).
In this paper, we will
focus on the restricted quantum groups
$\mathfrak{u}_q(\mathfrak{sl}_n)$ for $n>2$ and explore the indecomposable submodule
structures for $\mathcal{A}_q(n)$ and its truncated objects
$\mathcal{A}_q(n, \bold m)$ by the method of filtrations analysis, among which
Propositions 3.3---3.6 and Lemma 3.7 serve as
the basic but essential observations for the whole story.
Furthermore, we define the quantum Grassmann algebra
$\Omega_q(n)$ over $\mathcal A_q(n)$ and construct the quantum de Rham complex
$(\Omega_q(n), d^\bullet)$ via defining the appropriate
$q$-differentials $d^\bullet$  and its subcomplex $(\Omega_q(n,\bold
m), d^\bullet)$, describe the corresponding quantum de Rham
cohomology modules $H^\bullet(\Omega_q)$ for
$\Omega_q=\Omega_q(n)$ or $\Omega_q(n,\bold m)$, as well as compute the dimensions of $H^\bullet(\Omega_q)$.

\medskip
\noindent{\it 1.4.} The paper is organized as follows. Section 2
collects some notation and the results on the quantum divided power
algebra as $\mathfrak{u}_q(\mathfrak{sl}_n)$-module algebra from
\cite{HU}. In Section 3, an important notion, named ``energy degree"
is introduced, which is crucial for the description of Loewy
filtrations as well as Loewy layers of the $s$-th homogeneous
subspaces $\mathcal{A}^{(s)}_q(n, \bold m)$ (see Theorem 3.10). We develop a new ``intertwinedly-lifting"
method to prove the
indecomposability of $\mathcal{A}^{(s)}_q(n, \bold m)$ in the case when its socle is non-simple (see the proof of Theorem 3.8 (5) (ii)), and its rigidity (see Theorem 3.12)
under the assumption that $n\geq3$ and $\mathbf{char}(q)=l\geq3$. Thereby, we see that
all $\mathcal{A}^{(s)}_q(n)$'s are indecomposable and rigid (see Corollary 3.13),
and the indecomposable decomposition of $\mathcal{A}_q(n)$ is
$\mathcal{A}_q(n)=\bigoplus_{s=0}^{+\infty}\mathcal{A}^{(s)}_q(n)$.
As a by-product, since for different $s$, $\mathcal{A}^{(s)}_q(n)$'s are
not isomorphic to each other, 
$\mathfrak{u}_q(\mathfrak{sl}_n)$ $(\;n\ge 3\;)$ is of infinite representation
type (cf. \cite{ASS}). Section 4 is devoted to defining the $q$-differentials by using the $q$-derivations in \cite{HU},
which are not the ``differential calculus" in the sense of Woronowicz (\cite{W}), as well as constructing the quantum de Rham complex
$\Omega_q(n)$ over $\mathcal{A}_q(n)$ (see Propositions 4.2 \& 4.4), which is different from the
Wess-Zumino de Rham complex used in \cite{Ma}, \cite{S} in the rank $1$ case. For the quantum de Rham subcomplex
$\Omega_q(n,\bold m)$, we give an interesting description of
the corresponding quantum de Rham cohomologies (see Theorems 4.6 \& 4.8).

\section{Some notation and earlier results}

\noindent {\it 2.1. Arithmetic properties of $q$-binomials.} Let
$\mathbb{Z}[v, v^{-1}]$ be the Laurant polynomial ring in variable
$v$. For any integer $n\geq0$, define
$$
[n]_v =\frac{v^n-v^{-n}}{v-v^{-1}}, \qquad
[n]_v!=[n]_v[n-1]_v\cdots[1]_v.
$$
Obviously, $[n]_v$, $[n]_v!\in\mathbb{Z}[v, v^{-1}]$.

For integers $m$, $r\geq0$, we have (\cite{Lu4}),
$$
\left[{m\atop
r}\right]_v=\prod^r_{i=1}\frac{v^{m-i+1}-v^{-m+i-1}}{v^i-v^{-i}}\in\mathbb{Z}[v,
v^{-1}].
$$
Thus,

\smallskip
(1) For $0\le r\le m$, $[{m\atop r}]_v=[m]_v!/([r]_v![m-r]_v!)$;

\smallskip
(2) For $0\le m<r$,  $[{m\atop r}]_v=0$;

\smallskip
(3) For $m<0$, $[{m\atop r}]_v=(-1)^r[{{-m+r-1}\atop{r}}]_v$;

\smallskip
(4) Set $[{m\atop r}]_v=0$, when $r<0$.

\smallskip
Assume $k$ is an algebraically closed field of characteristic zero
and $q\in k^*$. We briefly set
$$[n]:=[n]_{v=q}, \qquad [n]!:=[n]_{v=q}!, \qquad
\left[{n\atop r}\right]:=\left[{m\atop r}\right]_{v=q},$$ when $v$
is specialized to $q$, where $q$-binomials satisfy
$$\left[{n\atop
r}\right]=q^{r-n}\left[{{n{-}1}\atop
{r{-}1}}\right]+q^r\left[{{n{-}1}\atop r}\right].$$

Define the $characteristic$ of $q$ as in \cite{HU},
$\mathbf{char}(q):=\text{min}\{\,\ell \mid [\,\ell\,]=0,
\ell\in\mathbb{Z}_{\geq0}\}$. $\mathbf{char}(q)=0$ if and only if
$q$ is generic. If $\mathbf{char}(q)=\ell>0$ and $q\neq\pm1$, then
either

(1) $q$ is the $2\ell$-th primitive root of unity; or

(2) $\ell$ is odd and $q$ is the $\ell$-th primitive root of unity.

\smallskip
Assume that the $\mathbb{Z}[v, v^{-1}]$-algebra $\mathcal{R}$ with
$\phi : \mathbb{Z}[v, v^{-1}]\longrightarrow \mathcal{R}$ and
$\mathbf{v}=\phi(v)$, is an integral domain satisfying
$\mathbf{v}^{2\ell}=1$ and $\mathbf{v}^{2t}\neq 1$ for all
$0<t<\ell$.

\begin{lemm}\textsc{(\cite{Lu4}, chapter 34)}
$(1)$ If $t\geq 1$ is not divided by $\ell$, and $a\in\mathbb{Z}$ is
divided by $\ell$, then $\phi(\left[a\atop t\right])=0$.

$(2)$ If $a_1\in\mathbb{Z}$ and $t_1\in \mathbb{N}$, then
$\phi(\left[\ell a_1\atop \ell t_1\right])=\mathbf{v}^{\ell
^2(a_1+1)t_1}\left(a_1\atop t_1\right).$

$(3)$ Let $a\in\mathbb{Z}$ and $t\in \mathbb{N}$, write $a=a_0+\ell
a_1$ with $a_0, a_1\in\mathbb{Z}$ such that $0\leq a_0\leq \ell -1$
and $t=t_0+\ell t_1$ with $t_0, t_1\in\mathbb{N}$ such that $0\leq
t_0\leq \ell -1$, then $\phi(\left[a\atop
t\right])=\mathbf{v}^{(a_0t_1-a_1t_0)\ell
+(a_1+1)t_1\ell^2}\phi(\left[a_0\atop t_0\right])\left(a_1\atop
t_1\right)$.

$(4)$ $\phi(\left[a\atop t\right])=\mathbf{v}^{(a_0t_1-a_1t_0)\ell
}\phi(\left[a_0\atop t_0\right])\phi(\left[\ell a_1\atop \ell
t_1\right])$.

$(5)$ $\mathbf{v}^{\ell^2+\ell }=(-1)^{\ell +1}$.
\end{lemm}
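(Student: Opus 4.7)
The plan is to establish the five items in the order $(5)$, $(2)$, $(3)$, and then to deduce $(1)$ and $(4)$ as corollaries. Item $(5)$ is a short parity calculation, $(2)$ is a direct specialization of a $q$-binomial at a $2\ell$-th root of unity, $(3)$ is the $q$-Lucas-type factorization separating the ``low'' and ``high'' digits of $a$ and $t$ in base $\ell$, and the remaining items follow by inspection.

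For $(5)$, the hypotheses $\mathbf v^{2\ell}=1$ and $\mathbf v^{2t}\neq1$ for $0<t<\ell$ force $\mathbf v^\ell=-1$ when $\ell$ is even (apply $t=\ell/2$, so that $\mathbf v^\ell\neq1$ but $(\mathbf v^\ell)^2=1$), while $\mathbf v^\ell$ may be either $\pm1$ when $\ell$ is odd. A case split on the parity of $\ell$ and a direct computation of $\mathbf v^{\ell^2+\ell}=\mathbf v^{\ell(\ell+1)}$ then yields $(\mathbf v^{2\ell})^{(\ell+1)/2}=1=(-1)^{\ell+1}$ in the odd case and $\mathbf v^{\ell^2}\cdot\mathbf v^\ell=1\cdot(-1)=(-1)^{\ell+1}$ in the even case.

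For $(2)$, the plan is to expand
\[
\left[{\ell a_1\atop \ell t_1}\right]_v=\prod_{i=1}^{\ell t_1}\frac{v^{\ell a_1-i+1}-v^{-(\ell a_1-i+1)}}{v^i-v^{-i}}
\]
and sort the factors by the residue of $i$ modulo $\ell$. Using $\mathbf v^{2\ell a_1}=1$, each class with $\ell\nmid i$ contributes a numerator factor that equals, up to an explicit power of $\mathbf v$, a denominator factor in the same class, and these pair off. What survives is the product over $i=\ell,2\ell,\dots,\ell t_1$, a product of $0/0$ ratios $[\ell(a_1-j+1)]/[\ell j]$ for $j=1,\dots,t_1$, each evaluated by l'H\^opital, or equivalently by $\lim_{v\to\mathbf v}(v^{\ell m}-v^{-\ell m})/(v^\ell-v^{-\ell})=m$. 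The product collapses to $\binom{a_1}{t_1}$ times a monomial in $\mathbf v$ whose exponent, after repeated use of $(5)$ to convert $\mathbf v^{\ell^2+\ell}$ blocks into signs, equals $\ell^2(a_1+1)t_1$.

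For $(3)$, I would argue by iterated use of the recursion
\[
\left[{a\atop t}\right]_v=v^{t-a}\left[{a-1\atop t-1}\right]_v+v^t\left[{a-1\atop t}\right]_v
\]
to unfold $a=a_0+\ell a_1$ and $t=t_0+\ell t_1$ into a sum over low-digit/high-digit splittings; after specialization at $\mathbf v$, all cross-terms mixing the two digit ranges vanish by the ``$\ell\mid a$ and $\ell\nmid t$'' vanishing encoded in $(1)$ (which is itself the case $a_0=0,\,t_0>0$ of $(3)$ and is proved directly by observing that the product expansion of $\left[{\ell a_1\atop t}\right]_v$ contains the factor $[\ell a_1]=0$ and has strictly fewer compensating zeros in the denominator). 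What survives is $\phi\!\left(\left[a_0\atop t_0\right]\right)\,\phi\!\left(\left[\ell a_1\atop \ell t_1\right]\right)$ multiplied by a power of $\mathbf v$; the hardest part is matching this accumulated exponent with the stated $(a_0t_1-a_1t_0)\ell+(a_1+1)t_1\ell^2$, a bookkeeping that again leans on $(5)$ and on free reduction of exponents modulo $2\ell$. Combining with $(2)$ gives $(3)$ as stated, and $(4)$ is the intermediate equality before $(2)$ is substituted in.
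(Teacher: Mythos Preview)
The paper does not prove this lemma: it is quoted verbatim from Lusztig's book (the citation \textsc{[Lu4, chapter 34]} in the heading is the entire justification), and the text moves on immediately to Lemma~2.2. So there is no ``paper's own proof'' to compare against; your outline is being measured against the standard argument in Lusztig.

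Your plan is sound and is essentially a reorganization of Lusztig's proof. A few remarks on the loose spots. First, in $(2)$ the phrase ``each evaluated by l'H\^opital'' is informal: the Gaussian binomial is already a Laurent polynomial in $\mathbb Z[v,v^{-1}]$, so there is no genuine $0/0$; what you are really doing is computing the value of that polynomial at $\mathbf v$ by tracking orders of vanishing of the individual numerator and denominator factors at $\mathbf v$. This is fine, but say it that way. Second, your direct argument for $(1)$ (``the product contains the factor $[\ell a_1]=0$ and has strictly fewer compensating zeros in the denominator'') needs the same care: you must count that for $t=t_0+\ell t_1$ with $1\le t_0\le\ell{-}1$ the numerator has $t_1{+}1$ vanishing factors (at $i\equiv1\pmod\ell$) while the denominator has only $t_1$ (at $i\equiv0\pmod\ell$), and you should note that the case $a_1<0$ reduces to $a_1\ge0$ via the reflection $\left[{m\atop r}\right]_v=(-1)^r\left[{-m+r-1\atop r}\right]_v$. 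Third, for $(3)$ the recursion approach works but is bookkeeping-heavy; Lusztig's route is cleaner, using the $q$-Vandermonde identity
\[
\left[{a_0+\ell a_1\atop t}\right]_v=\sum_{t'+t''=t} v^{\ell a_1 t'-a_0 t''}\left[{a_0\atop t'}\right]_v\left[{\ell a_1\atop t''}\right]_v,
\]
after which $(1)$ kills every summand with $\ell\nmid t''$, leaving exactly the $t''=\ell t_1$, $t'=t_0$ term. This collapses the exponent-matching you flag as ``the hardest part'' to a one-line substitution. Either way, once $(2)$ and $(3)$ are in hand, $(4)$ is immediate by back-substituting $(2)$ into $(3)$, and $(1)$ is the special case $a_0=0$, $t_0>0$ of $(3)$ since $\left[{0\atop t_0}\right]_v=0$.
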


According to this proposition, it is easy to get the following.

\begin{lemm}\textsc{(\cite{Lu2}, \cite{Lu4}; \cite{HU}, 1.5)}
Assume that $q\in k^*$, $\mathbf{char}(q)=\ell \geq3$.

$(1)$ Let $m=m_0+m_1\ell$, $r=r_0+r_1\ell $ with $0\leq m_0$, $r_0<\ell$,
$m_1$, $r_1\geq0$, and $m\geq r$. Then  $[{m\atop r}]=[{m_0\atop
r_0}]\binom{m_1}{r_1}$ when $\ell $ is odd and $q$ is the $\ell$-th
primitive root of unity; $[{m\atop r}]=(-1)^{(m_1+1)r_1\ell
+m_0r_1-r_0m_1}[{m_0\atop r_0}]\binom{m_1}{r_1}$ when $q$ is the
$2\ell$-th primitive root of unity, where $\binom{m_1}{r_1}$ is an
ordinary binomial coefficient.

$(2)$ Let $m=m_0+m_1\ell, 0\leq m_0<\ell, m_1\in \mathbb{Z}$, if $\ell$
is odd and $q$ is an $\ell$-th primitive root of unity,
 then $[{m\atop
\ell}]=m_1$; if $\ell$ the $2\ell$-th primitive root of unity, then
$[{m\atop \ell}]=(-1)^{(m_1+1)\ell +m_0}m_1$.

$(3)$ If $m=m_0+m_1\ell$, $m'=m'_0+m'_1\ell \in\mathbb{Z}$ with $0\leq
m_0$, $m'_0<\ell$ satisfy $q^m=q^{m'}$, $[{m\atop \ell}]=[{m'\atop \ell
}]$, then $m=m'$.
\end{lemm}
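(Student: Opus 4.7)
The plan is to specialize the general Lusztig identities recorded in Lemma~2.1 at $v=q$ and to convert the formal factors $\mathbf{v}^{\ell^2+\ell}$ and $\mathbf{v}^\ell$ into explicit signs in each of the two subcases of $\mathbf{char}(q)=\ell$. The key facts I will use throughout are that the order of $q$ is either $\ell$ (odd case) or $2\ell$ ($2\ell$-th root case), together with part (5) of Lemma~2.1, which forces $q^\ell=1$ in the first case and $q^\ell=-1$, $q^{\ell^2}=(-1)^\ell$ in the second.

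For part (1), I substitute $a=m$, $t=r$ (so that $a_0=m_0$, $a_1=m_1$, $t_0=r_0$, $t_1=r_1$) into Lemma~2.1(3), which reads $\phi([{a\atop t}])=\mathbf{v}^{(a_0t_1-a_1t_0)\ell+(a_1+1)t_1\ell^2}\phi([{a_0\atop t_0}])\binom{a_1}{t_1}$. In the odd-$\ell$, $\ell$-th primitive root case the whole $q$-power collapses to $1$ and gives $[{m_0\atop r_0}]\binom{m_1}{r_1}$ at once. In the $2\ell$-th primitive root case the exponent becomes $(-1)^{m_0r_1-m_1r_0}\cdot(-1)^{(m_1+1)r_1\ell}$, which I will regroup as the claimed $(-1)^{(m_1+1)r_1\ell+m_0r_1-r_0m_1}$. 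Part (2) is then immediate by specializing to $r=\ell$, i.e.\ $r_0=0$ and $r_1=1$, using $[{m_0\atop 0}]=1$ and $\binom{m_1}{1}=m_1$.

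For part (3), the identity $q^m=q^{m'}$ forces $m\equiv m'$ modulo the order of $q$: namely mod $\ell$ in the odd case and mod $2\ell$ in the $2\ell$-th root case. Combined with the restrictions $0\le m_0,m'_0<\ell$, this yields $m_0=m'_0$ in the odd case and $m_0=m'_0$ with $m_1\equiv m'_1\pmod 2$ in the $2\ell$-th root case. Invoking part (2), the equality $[{m\atop\ell}]=[{m'\atop\ell}]$ then reduces to $m_1=m'_1$ once one checks that the two $\pm$ prefactors in the $2\ell$-th root formula agree, which is precisely what the parity match secures.

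The main technical nuisance will be keeping the sign exponents straight in the $2\ell$-th root subcase, since $\ell$ itself may be either odd or even and one has to combine $q^\ell=-1$ with $q^{\ell^2}=(-1)^\ell$ carefully without introducing spurious sign factors; once these reductions are in hand, the whole lemma becomes a routine bookkeeping exercise built on Lemma~2.1.
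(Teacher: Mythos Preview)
Your proposal is correct and follows exactly the approach the paper indicates: the paper itself provides no detailed proof but simply states that Lemma~2.2 follows from Lemma~2.1 (``According to this proposition, it is easy to get the following''), and your argument is precisely the specialization of Lemma~2.1(3) at $v=q$ together with the appropriate sign bookkeeping in the $2\ell$-th root case. The only point worth noting is that for part~(2) with $m_1\in\mathbb{Z}$ possibly negative you should appeal directly to Lemma~2.1(3) (which allows $a\in\mathbb{Z}$) rather than to your part~(1) (which assumes $m_1\ge 0$ and $m\ge r$), but your phrasing already suggests this is what you intend.
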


\noindent{\it 2.2. Quantum $($restricted\,$)$ divided power
algebras.} Following  \cite{HU}, 2.1, for any $\alpha=(\alpha_1,
\ldots, \alpha_n), \beta=(\beta_1, \ldots, \beta_n)\in\mathbb{Z}^n$,
define the map
$\ast\colon\mathbb{Z}^n\times\mathbb{Z}^n\longrightarrow\mathbb{Z}$
as $\alpha\ast\beta=\sum_{j=1}^{n-1}\sum_{i>j}\alpha_i\beta_j$ and a
bicharacter
$\theta\colon\mathbb{Z}^n\times\mathbb{Z}^n\longrightarrow k$ of the
additive group $\mathbb{Z}^n$ as $\theta(\alpha,
\beta)=q^{\alpha\ast\beta-\beta\ast\alpha}$. Denote
$\varepsilon_i=(0,\cdots,\underset{i}{1},0,\cdots,0)$.

The second author introduced in \cite{HU} a {\it quantum divided
power algebra} $\mathcal{A}_q(n)$ as follows. Define
$\mathcal{A}_q(n):=\span_k\{
\,x^{(\alpha)}\mid\alpha\in\mathbb{Z}^n_+ \,\}$, with $x^{(0)}=1$,
$x^{(\varepsilon_i)}=x_i$ and
$$
x^{(\alpha)}x^{(\beta)}=q^{\alpha*\beta}\left[\alpha+\beta \atop
\alpha\right]x^{(\alpha+\beta)}=\theta(\alpha,
\beta)\,x^{(\beta)}x^{(\alpha)},
$$
where
$[{{\alpha+\beta}\atop{\alpha}}]:=\prod^n_{i=1}[{{\alpha_i+\beta_i}\atop{\alpha_i}}]$,
$[{{\alpha_i+\beta_i}\atop{\alpha_i}}]=[\alpha_i+\beta_i]!/[\alpha_i]![\beta_i]!,
\alpha_i$, $\beta_i\in\mathbb{Z}_+$.

When $\mathbf{char}(q)=\ell\geq3$, denote $\tau=(\ell {-}1, \ldots,
\ell {-}1)\in\mathbb{Z}_+^n$. Set
$$
\mathcal{A}_q(n, \textbf{1}):=\span_k\left.\Bigl\{
x^{(\alpha)}\in\mathcal{A}_q(n)\,\right|\,\alpha\in\mathbb{Z}^n_+,\
\alpha\leq\tau\,\Bigr\},
$$
where $\alpha\le \tau$ $\Longleftrightarrow$ $\alpha_i\le \tau_i$
for each $i$. Obviously, this is a subalgebra of $\mathcal{A}_q(n)$
with dimension $\ell^n$, which is called the {\it quantum restricted
divided power algebra}.

\begin{lemm}
\textsc{(\cite{HU}, 2.4)} Assume
$\mathbf{char}(q)=\ell\,(\ge 3)$, then the 
algebra $\mathcal{A}_q(n)$ is generated by $x_i$, $x_i^{(\ell)}
\,(1\leq i\leq n)$. When $\ell$ is odd and $q$ is an $\ell$-th
primitive root of $1$, $x_i^{(\ell)} \,(1\leq i\leq n)$ are central
in $\mathcal{A}_q(n)$, and $\mathcal{A}_q(n)\cong\mathcal{A}_q(n,
\bold 1)\bigotimes_kk[x_1^{(\ell )}, \ldots, x_n^{(\ell)}]$, as
algebras.
\end{lemm}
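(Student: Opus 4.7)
The plan is to verify three assertions in turn: that $\mathcal{A}_q(n)$ is generated by the elements $x_i$ and $x_i^{(\ell)}$; that under the additional assumption ($\ell$ odd and $q$ a primitive $\ell$-th root of unity) each $x_i^{(\ell)}$ is central; and that the induced algebra map
\[
\Phi\colon\mathcal{A}_q(n,\bold 1)\otimes_k k[x_1^{(\ell)},\ldots,x_n^{(\ell)}]\longrightarrow\mathcal{A}_q(n)
\]
is an isomorphism.

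First I would reduce the generation statement to a single-variable calculation. Since $(a\varepsilon_i)\ast(b\varepsilon_j)=ab$ when $i>j$ and vanishes otherwise, and since the multi-index $q$-binomial $\bigl[{\alpha_1\varepsilon_1+\cdots+\alpha_k\varepsilon_k+\alpha_{k+1}\varepsilon_{k+1}\atop \alpha_1\varepsilon_1+\cdots+\alpha_k\varepsilon_k}\bigr]$ factorizes coordinatewise into terms $\bigl[{\alpha_i\atop \alpha_i}\bigr]=1$ or $\bigl[{\alpha_{k+1}\atop 0}\bigr]=1$, an iteration of the defining product gives $x^{(\alpha)}=x_1^{(\alpha_1)}x_2^{(\alpha_2)}\cdots x_n^{(\alpha_n)}$. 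So it suffices to factor each $x_i^{(\alpha_i)}$. Writing $\alpha_i=a_0+a_1\ell$ with $0\le a_0<\ell$, Lemma~2.2\,(1) gives $\bigl[{k\ell\atop \ell}\bigr]=\bigl[{0\atop 0}\bigr]\binom{k}{1}=k$ and $\bigl[{a_0+a_1\ell\atop a_0}\bigr]=\bigl[{a_0\atop a_0}\bigr]\binom{a_1}{0}=1$ (up to signs in the $2\ell$-th root case). An induction on $a_1$, using the one-variable rule $x_i^{(a)}x_i^{(b)}=\bigl[{a+b\atop a}\bigr]x_i^{(a+b)}$, then produces $(x_i^{(\ell)})^{a_1}=a_1!\,x_i^{(a_1\ell)}$ and $x_i^{(a_0)}x_i^{(a_1\ell)}=x_i^{(\alpha_i)}$; together with $x_i^{a_0}=[a_0]!\,x_i^{(a_0)}$ this exhibits $x_i^{(\alpha_i)}$ as a nonzero scalar multiple of $x_i^{a_0}(x_i^{(\ell)})^{a_1}$, and the scalars are nonzero in $k$ because $0<[j]\in k^*$ for $0<j<\ell$ and $k$ has characteristic zero.

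For centrality I would compute directly from the definition of $\ast$ that
\[
\alpha\ast(\ell\varepsilon_i)-(\ell\varepsilon_i)\ast\alpha=\ell\Bigl(\sum_{k>i}\alpha_k-\sum_{k<i}\alpha_k\Bigr)\in\ell\mathbb{Z}.
\]
Under the hypothesis $q^\ell=1$ this forces $\theta(\alpha,\ell\varepsilon_i)=q^{\ell\cdot(\text{integer})}=1$ for every $\alpha$, and the commutation relation $x^{(\alpha)}x_i^{(\ell)}=\theta(\alpha,\ell\varepsilon_i)\,x_i^{(\ell)}x^{(\alpha)}$ then places $x_i^{(\ell)}$ in the center.

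With centrality in hand, $\Phi$ is a well-defined algebra homomorphism once $k[x_1^{(\ell)},\ldots,x_n^{(\ell)}]$ is recognized as a genuine polynomial ring: the $x_i^{(\ell)}$ commute pairwise by centrality, and their monomials $(x_1^{(\ell)})^{r_1}\cdots(x_n^{(\ell)})^{r_n}$ are nonzero scalar multiples of the distinct basis vectors $x^{(\sum_i r_i\ell\varepsilon_i)}$, hence algebraically independent. Surjectivity of $\Phi$ is exactly the generation step; injectivity follows from the unique base-$\ell$ decomposition $\alpha=\beta+\sum_i r_i\ell\varepsilon_i$ with $\beta\le\tau$, under which $\Phi\bigl(x^{(\beta)}\otimes\prod_i(x_i^{(\ell)})^{r_i}\bigr)$ is a nonzero scalar multiple of $x^{(\alpha)}$, so $\Phi$ carries a basis to scalar multiples of a basis. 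The main obstacle throughout is the careful use of Lemma~2.2\,(1) to collapse the exotic $q$-binomials $\bigl[{k\ell\atop \ell}\bigr]$ and $\bigl[{a_0+a_1\ell\atop a_0}\bigr]$ to nonzero ordinary scalars; everything else is bookkeeping.
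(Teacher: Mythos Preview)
Your proof is correct. The paper itself does not supply a proof of this lemma; it merely cites it from \cite{HU}, 2.4. Your argument is the natural one: the key computational inputs are exactly the ordered-product factorisation $x^{(\alpha)}=x_1^{(\alpha_1)}\cdots x_n^{(\alpha_n)}$ (which uses that the relevant $\ast$-pairings and $q$-binomials are trivial), the Lucas-type formula in Lemma~2.2\,(1) to evaluate $\bigl[{k\ell\atop\ell}\bigr]$ and $\bigl[{a_0+a_1\ell\atop a_0}\bigr]$ as nonzero scalars, and the observation $\alpha\ast(\ell\varepsilon_i)-(\ell\varepsilon_i)\ast\alpha\in\ell\mathbb Z$ for centrality. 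One small point worth making explicit: in the $2\ell$-th root case the signs from Lemma~2.2\,(1) do not obstruct anything, since the only use made of those $q$-binomials is that they are nonzero scalars; your parenthetical ``up to signs'' is therefore sufficient, but a reader might appreciate a sentence saying so.
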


Define an automorphism of $\mathcal{A}_q(n)$ as
$\sigma_i(x^{(\beta)})=q^{\beta_i}x^{(\beta)}$. Obviously,
$\sigma_i\sigma_j=\sigma_j\sigma_i$. In particular,
$\sigma_i=\text{id}$ for $q=1$. Define a {\it $q$-derivative} of
$\mathcal{A}_q(n)$ as $\frac{\partial_q}{\partial
x_i}(x^{(\beta)})=q^{-\varepsilon_i\ast\beta}x^{(\beta-\varepsilon_i)}$.
Briefly, denote it by $\partial_i$. Then one has
$\partial_i\partial_j=\theta(\varepsilon_i,\varepsilon_j)\partial_j\partial_i$.

The $U_q(\mathfrak{sl}_n)$-module algebra structure of
$\mathcal{A}_q(n)$ can be realized by virtue of the generators $\sigma_i^{\pm1}, \Theta(\pm\varepsilon_i), x_i,
 \partial_i$ in the quantum Weyl algebra $\mathcal
W_q(2n)$ defined by \cite{HU}.

\begin{prop}\textsc{(\cite{HU}, 4.1)}
For any monomial $x^{(\beta)}\in \mathcal{A}_q(n)$, set
\begin{eqnarray}
e_i.\, x^{(\beta)}&=&(x_i\partial_{i+1}\sigma_i)(x^{(\beta)})=[\beta_i{+}1]\,x^{(\beta+\varepsilon_i-\varepsilon_{i+1})}, \\
f_i.\, x^{(\beta)}&=&(\sigma^{-1}x_{i+1}\partial_i)(x^{(\beta)})=[\beta_{i+1}{+}1]\,x^{(\beta-\varepsilon_i+\varepsilon_{i+1})}, \\
K_i.\, x^{(\beta)}&=&(\sigma_i\sigma_{i+1}^{-1})(x^{(\beta)})=q^{\beta_i-\beta_{i+1}}x^{(\beta)}, \\
K_i^{-1}.\,
x^{(\beta)}&=&(\sigma_i^{-1}\sigma_{i+1})(x^{(\beta)})=q^{\beta_{i+1}-\beta_{i}}x^{(\beta)},
\end{eqnarray}
where $e_i,\, f_i,\, K_i,\, K_i^{-1} \,(1\leq i\leq n{-}1)$
\footnote{In this paper, the coalgebra structure of
$U_q(\mathfrak{sl}_n)$ is defined over the generators as follows:
$\Delta(K^{\pm1}_i)=K^{\pm1}_i\otimes K^{\pm1}_i,\,
\Delta(e_i)=e_i\otimes K_i+1\otimes  e_i,\,
\Delta(f_i)=f_i\otimes1+K_i^{-1}\otimes f_i,\, \epsilon(K^\pm_i)=1,
\epsilon(e_i)=0,\, \epsilon(f_i)=0,\, S(K^{\pm1}_i)=K^{\mp1}_i,\,
S(e_i)=-e_iK^{-1}_i,\, S(f_i)=-K_if_i$, for $i=1, \cdots, n-1$.} are
the generators of $U_q(\mathfrak{sl}_n)$.

This equips $\mathcal{A}_q(n)$ with a $U$-module algebra, where
$U=U_q(\mathfrak{sl}_n)$, or $\mathfrak u_q(\mathfrak{sl}_n)$
$:=U_q(\mathfrak{sl}_n)/(e_i^\ell,\, f_i^\ell,\,
K_i^{2\ell}-1, \ \forall\,\, i<n)$ at roots of $1$.
\end{prop}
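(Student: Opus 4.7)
\emph{Proof proposal.} My plan is to establish the proposition in three stages: first verify the four explicit action formulas by direct computation, then verify that the resulting operators satisfy the Chevalley--Serre relations together with the Hopf-module-algebra compatibility on $\mathcal{A}_q(n)$, and finally check that the action descends to $\mathfrak{u}_q(\mathfrak{sl}_n)$ at roots of unity.

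For the first stage, formulas (3) and (4) are immediate from the definition of $\sigma_i$. For (1), I would successively apply $\sigma_i$, then $\partial_{i+1}$, then left multiplication by $x_i = x^{(\varepsilon_i)}$, using the product formula $x^{(\varepsilon_i)} x^{(\gamma)} = q^{\varepsilon_i \ast \gamma} [\gamma_i+1]\, x^{(\gamma + \varepsilon_i)}$. Collecting the $q$-scalars produced by these three operations, one checks that all $\ast$-contributions cancel (using $\varepsilon_i \ast \varepsilon_{i+1} = 0$ and $\varepsilon_{i+1} \ast \varepsilon_i = 1$), leaving exactly $[\beta_i+1]\, x^{(\beta + \varepsilon_i - \varepsilon_{i+1})}$. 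The computation for (2) is parallel; the extra $\sigma^{-1}$ placed on the left produces precisely the $K_i^{-1}$ factor anticipated by $\Delta(f_i) = f_i\otimes 1 + K_i^{-1}\otimes f_i$.

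For the second stage, the torus relations and $K_i e_j K_i^{-1} = q^{a_{ij}} e_j$, $K_i f_j K_i^{-1} = q^{-a_{ij}} f_j$ follow directly from (1)--(4) once one notes that the shift $\varepsilon_j - \varepsilon_{j+1}$ multiplies the $K_i$-eigenvalue by $q^{a_{ij}}$. The commutator $[e_i,f_j]$ evaluated on $x^{(\beta)}$ vanishes for $i\neq j$ because the two shifts act on disjoint coordinate pairs; for $i=j$, a telescoping of $q$-integer products collapses to $(q^{\beta_i-\beta_{i+1}} - q^{\beta_{i+1}-\beta_i})/(q-q^{-1})$, matching $(K_i - K_i^{-1})/(q-q^{-1})$. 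The quantum Serre relations for $|i-j|>1$ are immediate, and for $|i-j|=1$ they reduce to a standard identity among $q$-integers. The module-algebra axiom $u.(x^{(\alpha)} x^{(\beta)}) = \sum (u_{(1)}.x^{(\alpha)})(u_{(2)}.x^{(\beta)})$ is then checked generator-by-generator on $x^{(\alpha)} x^{(\beta)} = q^{\alpha\ast\beta}[{\alpha+\beta\atop \alpha}]\, x^{(\alpha+\beta)}$; for $K_i^{\pm1}$ both sides scale by the same exponent of $q$, and for $e_i$ (dually $f_i$), unwinding the bicharacter $\theta$ and the Leibniz-type expansion from $\Delta(e_i) = e_i\otimes K_i + 1\otimes e_i$ reduces the required identity to the $q$-Pascal recursion $[{n\atop r}] = q^{r-n}[{n-1\atop r-1}] + q^r [{n-1\atop r}]$ recorded in Section 2.1.

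For the third stage, formulas (1) and (2) show that $e_i^\ell.\, x^{(\beta)}$ and $f_i^\ell.\, x^{(\beta)}$ each carry a factor of the form $[\gamma+1][\gamma+2]\cdots[\gamma+\ell]$, which contains $[\ell] = 0$ since $\mathbf{char}(q) = \ell$; hence these operators vanish, and $K_i^{2\ell} = 1$ is immediate from (3). The main obstacle is the module-algebra compatibility for $e_i$ and $f_i$ in the second stage: it requires careful bookkeeping of the asymmetric bicharacter $\theta(\alpha,\beta) = q^{\alpha\ast\beta - \beta\ast\alpha}$ against the non-cocommutative coproduct, and arranging the cancellations that collapse the computation onto the $q$-Pascal recursion is where the technical delicacy lies.
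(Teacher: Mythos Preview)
Your proposal is correct and follows the natural direct-verification route. Note, however, that the paper does not actually prove this proposition: it is quoted verbatim from \cite{HU}, Theorem~4.1, and no argument is given in the present text. So there is no ``paper's own proof'' to compare against here; your outline is essentially what one would do to reconstruct the proof from \cite{HU}.

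One small imprecision: in Stage~2 you justify $[e_i,f_j]=0$ for $i\neq j$ by saying the shifts act on ``disjoint coordinate pairs''. When $|i-j|=1$ the pairs $\{i,i{+}1\}$ and $\{j,j{+}1\}$ overlap in one coordinate, so they are not literally disjoint; the commutator still vanishes, but because the two scalar prefactors produced in either order coincide (e.g.\ $[\beta_i{+}1][\beta_{i+2}{+}1]$ in both orders when $j=i{+}1$), not because of disjointness. This is a wording issue rather than a gap.
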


Denote by $|\,\alpha\,|:=\sum\alpha_i$ the degree of
$x^{(\alpha)}\in \mathcal A_q(n)$. Set $\mathcal A_q:=\mathcal
A_q(n)$ or $\mathcal A_q(n,\bold 1)$, let
$\mathcal{A}^{(s)}_q:=\span_k\bigl\{\, x^{(\alpha)}\in\mathcal
A_q\,\big|\,\, |\,\alpha\,|=s\,\bigr\}$ be the subspace of $\mathcal
A_q$ spanned by homogeneous elements of degree $s$.

\begin{theorem}\textsc{(\cite{HU}, 4.2)}
$\mathcal{A}^{(s)}_q$ is a $U$-submodule of $\mathcal{A}_q$.

$(1)$ If $\mathbf{char}(q)=0$, $\mathcal{A}^{(s)}_q(n)\cong
V(s\lambda_1)$ is a simple module generated by highest weight vector
$x^{(\bold s)}$, where $\bold s=(s, 0, \cdots,
0)=s\varepsilon_1=s\lambda_1$, $\lambda_1$ is the first fundamental
weight of $\mathfrak{sl}_n$.

$(2)$ If $\mathbf{char}(q)=\ell \geq 3$, $\mathcal{A}^{(s)}_q(n,
\bold 1)\cong
V\bigl((l{-}1{-}s_i)\lambda_{i{-}1}{+}s_i\lambda_i\bigr)$ is a
simple module generated by highest weight vector $x^{(\bold s)}$,
where
 $s=(i{-}1)(\ell
{-}1)+s_i$, $0\leq s_i<\ell$ for $1\leq i\leq n$, and $\bold s=(\ell
{-}1,\ldots,\ell
{-}1,s_i,0,\ldots,0)=(l{-}1{-}s_i)\lambda_{i-1}+s_i\lambda_i$,
$\lambda_i=\varepsilon_1+\cdots+\varepsilon_i$ $(i<n)$ is the $i$-th
fundamental weight of $\mathfrak{sl}_n$.
\end{theorem}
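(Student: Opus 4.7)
The plan is to verify four items: (i) $\mathcal{A}_q^{(s)}$ is $U$-stable, (ii) $x^{(\mathbf{s})}$ is a highest-weight vector of the stated weight, (iii) the cyclic submodule $U.x^{(\mathbf{s})}$ exhausts $\mathcal{A}_q^{(s)}$, and (iv) $\mathcal{A}_q^{(s)}$ is simple. Throughout, $\mathcal{A}_q^{(s)}$ stands for $\mathcal{A}_q^{(s)}(n)$ in case (1) and $\mathcal{A}_q^{(s)}(n,\mathbf{1})$ in case (2). Items (i) and (ii) follow directly from Proposition 2.4: the generators $e_i$, $f_i$ shift the index tuple $\alpha$ by $\pm(\varepsilon_i-\varepsilon_{i+1})$ while $K_i^{\pm 1}$ acts diagonally, preserving $|\alpha|$; for $x^{(\mathbf{s})}$, each $e_j.x^{(\mathbf{s})}$ vanishes because either $\mathbf{s}_j=\ell-1$ (making $[\mathbf{s}_j+1]=0$) or $\mathbf{s}_{j+1}=0$ (making $x^{(\mathbf{s}+\varepsilon_j-\varepsilon_{j+1})}=0$); the $K_j$-eigenvalues $q^{\mathbf{s}_j-\mathbf{s}_{j+1}}$ assemble to $s\lambda_1$ in case (1) and to $(\ell-1-s_i)\lambda_{i-1}+s_i\lambda_i$ in case (2).

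For (iii) I would do downward induction on the dominance order on the index set $\{\alpha\in\mathbb{Z}_+^n : |\alpha|=s\}$ (with the extra condition $\alpha\le\tau$ in case (2)). If $\alpha\ne\mathbf{s}$, the highest-weight characterization in (ii) must fail at some $j$, giving $\alpha_{j+1}>0$ (and, in case (2), also $\alpha_j<\ell-1$); then $\alpha':=\alpha+\varepsilon_j-\varepsilon_{j+1}$ lies in the index set and strictly dominates $\alpha$, and the formula $f_j.x^{(\alpha')}=[\alpha_{j+1}]\,x^{(\alpha)}$ has a nonzero coefficient, since $\alpha_{j+1}$ is a positive integer strictly below $\ell$ in case (2) (and a positive integer in case (1)). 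By induction $x^{(\alpha')}\in U.x^{(\mathbf{s})}$, so $x^{(\alpha)}\in U.x^{(\mathbf{s})}$ as well.

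For (iv) I would first extend (ii) to linear combinations: for any $v=\sum c_\alpha x^{(\alpha)}$ with $e_j.v=0$ for every $j<n$, the injectivity of the shift $\alpha\mapsto\alpha+\varepsilon_j-\varepsilon_{j+1}$ on the monomial basis forces $c_\alpha=0$ unless $\alpha=\mathbf{s}$, so the highest-weight subspace of $\mathcal{A}_q^{(s)}$ is one-dimensional. Next observe that the $\mathfrak{sl}_n$-weight $\sum_i(\alpha_i-\alpha_{i+1})\lambda_i$ of a monomial separates distinct monomials of fixed total degree $s$, yielding a refinement of the $K_i$-eigenspace decomposition by the $\mathfrak{sl}_n$ weight lattice with one-dimensional homogeneous pieces, which the $U$-action shifts by $\pm\alpha_j$. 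For any nonzero submodule $W$, pick $0\ne v\in W$ and a dominance-maximal monomial $x^{(\alpha^*)}$ in $v$'s support; there is a product $u$ of $e_j$'s taking $x^{(\alpha^*)}$ to a nonzero multiple of $x^{(\mathbf{s})}$ (the reverse of the $f_j$-path in (iii)). Applied to $v$, this $u$ shifts every monomial in the support by the common weight difference $\mathbf{s}-\alpha^*$, so only the $\alpha^*$-term contributes to the $\lambda$-isotypic component of $u.v\in W$; iterating this ``top-extraction'' until the residue lies in $\mathbb{C}\cdot x^{(\mathbf{s})}$ shows $x^{(\mathbf{s})}\in W$, whence $W=\mathcal{A}_q^{(s)}$ by (iii).

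The expected main obstacle is (iv). In case (1) the $K_i$-eigenvalues distinguish integral weights, so simplicity is immediate, but in case (2) submodules need not be homogeneous for the weight-lattice grading because distinct integral weights can share $K_i$-eigenvalues at a root of unity. The delicate point is the strict upper-triangularity of the $e_j$-action in the weight-lattice grading: even though the $\lambda$-isotypic component of $u.v$ is singled out only by ``integral weight'' data not detected by $K_i$-eigenvalues, this triangularity ensures that the component in question is preserved under further $e_j$-applications (being killed outright rather than mixed with anything else), and the one-dimensionality of the highest-weight subspace then pins down the residual contribution to a scalar multiple of $x^{(\mathbf{s})}$.
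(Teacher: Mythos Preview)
The paper does not supply its own proof of this theorem: it is quoted in Section~2 as a result from \cite{HU}, and is only invoked later. The techniques you use in (iii) are exactly those the paper attributes to the original proof (see the sentence preceding Proposition~3.5 and the inductive step inside that proof), so your approach to cyclic generation matches what is intended.

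Items (i)--(iii) and the first half of (iv) (one-dimensionality of the space of vectors killed by every $e_j$) are correct and cleanly argued. The second half of (iv), however, is not complete as written. Your ``top-extraction'' step produces $u.v\in W$ with a nonzero $x^{(\mathbf s)}$-component in the weight-lattice grading, but---as you yourself observe---in case (2) the submodule $W$ need not be homogeneous for that grading, so this component cannot be extracted by applying elements of $U$. The proposed iteration then does not visibly converge: each pass yields another element of $W$ with $x^{(\mathbf s)}$ in its support plus lower terms, with no decreasing invariant forcing the lower terms to disappear.

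The remedy is already implicit in your last paragraph and is simpler than the top-extraction scheme. The strict upper-triangularity of every $e_j$ in the weight-lattice grading (each application raises the minimum of $\sum_i(n{-}i)\alpha_i$ over the support by one) implies that, starting from any $0\ne v\in W$, one may repeatedly apply $e_j$'s---at each step choosing any $j$ with $e_j.v\ne 0$---and the process terminates after finitely many steps with a nonzero vector annihilated by every $e_j$. By your one-dimensionality computation that vector lies in $k\,x^{(\mathbf s)}$, whence $x^{(\mathbf s)}\in W$ and (iii) gives $W=\mathcal A_q^{(s)}$. Replacing the ``iteration'' by this direct nilpotence argument closes the gap; no reference to the dominance-maximal monomial or to the $\lambda$-isotypic component is needed.
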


Set $P_{a,b}(t):=(1+t+t^2+\cdots+t^{b-1})^a$, for $a, \,b\in
\mathbb{Z}_{\geq 0}$.

\begin{coro}
 $\dim\mathcal{A}^{(s)}_q(n, \bold
1)=$ the coefficient of $t^s$ of polynomial
$P_{n,\ell}(t)=\sum_{i=0}^{\lfloor
\frac{s}{\ell}\rfloor}(-1)^i\binom{n}{i}\binom{n{+}s{-}i\ell{-}1}{n{-}1}$.
\end{coro}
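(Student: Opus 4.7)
The plan is to prove this in two short steps: first translate the dimension into a generating-function coefficient via an explicit basis count, then expand that generating function using the geometric series identity to extract the closed form.

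First, I would recall that by definition $\mathcal A_q(n,\bold 1)=\mathrm{span}_k\{x^{(\alpha)}\mid 0\le \alpha_i\le \ell-1,\ 1\le i\le n\}$, so the $s$-th homogeneous subspace $\mathcal A^{(s)}_q(n,\bold 1)$ has basis $\{x^{(\alpha)}\mid \alpha\in\mathbb Z_+^n,\ |\alpha|=s,\ \alpha\le\tau\}$. Hence $\dim\mathcal A^{(s)}_q(n,\bold 1)$ equals the number of $n$-tuples $(\alpha_1,\ldots,\alpha_n)$ of nonnegative integers with $\alpha_i\le \ell-1$ and $\sum_i\alpha_i=s$. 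By the standard product-of-generating-functions argument, this count is exactly the coefficient of $t^s$ in $\prod_{i=1}^{n}(1+t+t^2+\cdots+t^{\ell-1})=P_{n,\ell}(t)$, establishing the first equality.

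Next I would apply the identity $1+t+\cdots+t^{\ell-1}=\dfrac{1-t^\ell}{1-t}$ to write
$$P_{n,\ell}(t)=\frac{(1-t^\ell)^n}{(1-t)^n}=\Bigl(\sum_{i=0}^{n}(-1)^i\binom{n}{i}t^{i\ell}\Bigr)\Bigl(\sum_{j\ge 0}\binom{n+j-1}{n-1}t^{j}\Bigr),$$
using the binomial theorem for the numerator and the negative binomial series $(1-t)^{-n}=\sum_{j\ge 0}\binom{n+j-1}{n-1}t^j$ for the denominator. Extracting the coefficient of $t^s$ by setting $j=s-i\ell$ (and noting that the term contributes only when $i\ell\le s$, i.e.\ $0\le i\le\lfloor s/\ell\rfloor$, and that $\binom{n}{i}=0$ automatically for $i>n$) gives
$$[t^s]P_{n,\ell}(t)=\sum_{i=0}^{\lfloor s/\ell\rfloor}(-1)^i\binom{n}{i}\binom{n+s-i\ell-1}{n-1},$$
which is the desired closed form.

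Neither step involves a genuine obstacle: the basis description of $\mathcal A_q(n,\bold 1)$ is immediate from Section 2.2, and the generating-function manipulation is classical. The only minor care needed is bookkeeping at the upper summation limit (verifying that the convention $\binom{n}{i}=0$ for $i>n$ makes the stated range $0\le i\le\lfloor s/\ell\rfloor$ consistent with the expansion of $(1-t^\ell)^n$); this is a one-line check rather than a real difficulty.
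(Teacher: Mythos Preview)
Your proof is correct and follows essentially the same approach as the paper. The paper does not give an explicit proof of this corollary, but the argument for the analogous (and more general) Proposition 3.1(2) is identical in spirit: count the monomial basis via the generating function $P_{n,m\ell}(t)$ and then expand $(1-t^{m\ell})^n(1-t)^{-n}$ to extract the closed form.
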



Since $\dim\mathcal{A}^{(s)}_q<\infty$ for all $s\geq 0$, they are
both noetherian and artinian modules. Thus they satisfy the
conditions of the Krull-Schmidt theorem.

\begin{lemm}\textsc{(Krull-Schmidt theorem)}
Let $M$ be a module that is both noetherian and artinian, and let
$M=M_1\oplus\cdots\oplus M_n=N_1\oplus\cdots\oplus N_m$, where $M_i$
and $N_j$ are indecomposable. Then $m=n$ and there exists a
permutation $i\mapsto i'$ such that $M_i\cong N_{i'}$, $1\leq i\leq
n$.
\end{lemm}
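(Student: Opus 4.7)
The plan is the classical three-step strategy: establish Fitting's lemma for the indecomposable summands, use it to show their endomorphism rings are local, then run the standard exchange argument with induction on $n$. Every $M_i$ and $N_j$ inherits both chain conditions from $M$ (as a summand it is both a submodule and a quotient), which is all that the proof really needs.

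For Step 1 (Fitting's lemma), given any $\varphi\in\mathrm{End}(M_i)$, the ascending chain $\ker\varphi\subseteq\ker\varphi^2\subseteq\cdots$ stabilizes by the noetherian hypothesis, while the descending chain $\im\varphi\supseteq\im\varphi^2\supseteq\cdots$ stabilizes by the artinian hypothesis. For sufficiently large $k$ I would verify the splitting $M_i=\ker\varphi^k\oplus\im\varphi^k$ by a direct argument, and since $M_i$ is indecomposable one summand must vanish. Hence $\varphi$ is either nilpotent or an automorphism.

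For Step 2 (locality of $\mathrm{End}(M_i)$), by Step~1 every non-unit is nilpotent, so it suffices to show that the sum of two non-units is again a non-unit. Assuming $f,g$ are non-units but $f+g$ is an automorphism with inverse $h$, I apply Step~1 to $hf$: if $hf$ were an automorphism, then $f=h^{-1}(hf)$ would be too, contradicting the non-invertibility of $f$; so $hf$ is nilpotent. Then $1-hf=hg$ is inverted by the terminating geometric series $\sum_{k\geq 0}(hf)^k$, forcing $g=h^{-1}(hg)$ to be invertible---contradiction. Thus the non-units form a two-sided ideal and $\mathrm{End}(M_i)$ is local.

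For Step 3 (exchange and induction on $n$), let $\iota'_i,\pi'_i$ and $\iota_j,\pi_j$ be the inclusions and projections attached to the two decompositions. From the identity $1_{M_1}=\sum_j\pi'_1\iota_j\pi_j\iota'_1$ in $\mathrm{End}(M_1)$ and the locality of Step~2, at least one summand must be a unit; after reindexing the $N_j$'s, assume $\pi'_1\iota_1\pi_1\iota'_1$ is an automorphism of $M_1$. A short check then shows that $\pi_1\iota'_1\colon M_1\to N_1$ is an isomorphism and that $M=M_1\oplus N_2\oplus\cdots\oplus N_m$. Cancelling the common summand $M_1$ leaves $M_2\oplus\cdots\oplus M_n\cong N_2\oplus\cdots\oplus N_m$, and induction on $n$ produces the desired permutation. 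I expect Step~2 to be the real obstacle, because it is there that both chain conditions are genuinely used (through Fitting's lemma); once locality of the $\mathrm{End}(M_i)$ is available, the exchange manipulation in Step~3 is purely formal and the induction closes with no further appeal to the module structure of $M$.
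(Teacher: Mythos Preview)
Your outline is the standard, correct proof of the Krull--Schmidt theorem via Fitting's lemma, locality of the endomorphism ring of an indecomposable, and the exchange argument. There is nothing to compare against here: the paper states this lemma as a classical result and gives no proof of its own, so your approach is not merely compatible with the paper's---it supplies what the paper deliberately omits.
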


\noindent {\it 2.3. Quantum exterior algebra.} Recall the Manin's
quantum exterior algebra $k[A_q^{0|n}]$
$:=k\{x_1,\cdots,x_n\}/(x_i^2, x_jx_i+q^{-1}x_ix_j, i<j)$, which is
a $U$-module algebra with $e_i.\,x_j=\delta_{i+1, j}x_i$,
$f_i.\,x_j=\delta_{ij}x_{i+1}$,
$K_i.\,x_j=q^{(\varepsilon_i-\varepsilon_{i+1}\mid
\varepsilon_j)}x_j$, for $U=U_q(\mathfrak{sl}_n)$ or $\mathfrak
u_q(\mathfrak{sl}_n)$.

The known fact below is independent of $\mathbf{char}(q)$.

\begin{lemm} $k[A_q^{0|n}]=\bigoplus_{s=0}^nk[A_q^{0|n}]_{(s)}$ as $U$-modules, and
$$
k[A_q^{0|n}]_{(s)}=\span_k\bigl\{\, x_{i_1}\cdots x_{i_s}\,\big|\,
1\le i_1\le\cdots\le i_s\le n\,\bigr\}\cong V(\lambda_s)
$$
is a simple module generated by highest weight vector $x_1\cdots
x_s$, where $\lambda_s$ is the $s$-th fundamental weight of
$\mathfrak{sl}_n$.
\end{lemm}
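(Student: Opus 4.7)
The plan is to realize each homogeneous piece $k[A_q^{0|n}]_{(s)}$ as the simple highest weight $U$-module $V(\lambda_s)$ with explicit highest weight vector $x_1\cdots x_s$; the argument will never invoke a Weyl-type dimension formula, so it works uniformly in $\mathbf{char}(q)$. First I would verify that the degree grading is $U$-stable: each generator $x_j$ has degree $1$, and the operators $e_i,\,f_i,\,K_i^{\pm1}$ send $x_j$ to a scalar multiple of some $x_k$ (or to zero); the coproducts $\Delta(e_i)=e_i\ot K_i+1\ot e_i$ and $\Delta(f_i)=f_i\ot 1+K_i^{-1}\ot f_i$ then inductively send a degree-$s$ monomial to a sum of degree-$s$ monomials, so each piece is a $U$-submodule. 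The relations $x_i^2=0$ and $x_jx_i=-q^{-1}x_ix_j$ for $i<j$ reduce any monomial to a strictly ordered one, and a standard diamond-lemma argument shows $\{x_{i_1}\cdots x_{i_s}\mid i_1<\cdots<i_s\}$ is a basis, giving $\dim k[A_q^{0|n}]_{(s)}=\binom{n}{s}$.

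Next, using iterated coproducts I would compute $K_i.(x_1\cdots x_s)=q^{(\varepsilon_i-\varepsilon_{i+1}\mid\varepsilon_1+\cdots+\varepsilon_s)}x_1\cdots x_s=q^{\delta_{i,s}}x_1\cdots x_s$, which is the action of $\lambda_s$. For $e_i$, only the slot occupied by $x_{i+1}$ can contribute: if $i+1>s$ no such slot exists, while if $i+1\le s$ the substitution $x_{i+1}\mapsto x_i$ creates an $x_i^2$ inside the product, giving zero. So $x_1\cdots x_s$ is a highest weight vector of weight $\lambda_s$. More generally, the basis monomials have pairwise distinct weights $\varepsilon_{i_1}+\cdots+\varepsilon_{i_s}$, so every weight space is one-dimensional, and any highest weight vector must be a scalar multiple of some basis monomial; the annihilation condition by every $e_j$ forces the index set to contain $j$ whenever it contains $j+1$, singling out $\{1,\ldots,s\}$. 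Thus the highest weight vector is unique up to scalar.

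Finally, for simplicity, any nonzero $U$-submodule $W\subset k[A_q^{0|n}]_{(s)}$ contains a vector of dominance-maximal weight, which is killed by every $e_i$ and hence, by uniqueness, is a nonzero scalar multiple of $x_1\cdots x_s$. To deduce $W=k[A_q^{0|n}]_{(s)}$ I would realize each basis monomial $x_{i_1}\cdots x_{i_s}$ as a nonzero scalar times an iterated $f$-image of $x_1\cdots x_s$: processing positions $k=s,s{-}1,\ldots,1$ in turn, apply $f_k,f_{k+1},\ldots,f_{i_k-1}$ to raise the $k$-th index from $k$ up to $i_k$. At every intermediate step the target slot is empty, since positions $k+1,\ldots,s$ sit at $i_{k+1}<\cdots<i_s$, all strictly above $i_k$, while positions $1,\ldots,k-1$ still sit at $1,\ldots,k-1$; so no $x_j^2=0$ relation is triggered, and only unit scalars (from $f_j.x_j=x_{j+1}$ and from the $K_j^{-1}$ eigenvalues on the other factors) appear. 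The main bookkeeping point is tracking these $K_j^{-1}$ scalars through the iterated coproducts, but since they are always powers of $q$, no coefficient collapses; the argument is identical for generic $q$ and at roots of unity, identifying $k[A_q^{0|n}]_{(s)}\cong V(\lambda_s)$.
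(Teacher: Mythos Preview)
Your proof is correct. The paper does not supply its own argument for this lemma; it simply records it as a ``known fact\ldots\ independent of $\mathbf{char}(q)$'' and moves on. Your write-up fills that gap cleanly: the weight computation for $x_1\cdots x_s$, the observation that all weight spaces are one-dimensional (forcing any highest weight vector to be a monomial and then pinning it down as $x_1\cdots x_s$), and the explicit descending-slot $f$-chain to reach every basis monomial are exactly the ingredients one needs, and your check that no intermediate $f_j$ produces a repeated index is the right bookkeeping. One cosmetic remark: the phrase ``only unit scalars'' is slightly off, since the $K_j^{-1}$-eigenvalues are arbitrary powers of $q$; you immediately clarify this, but you might just say ``nonzero scalars'' there. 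Also note the paper's displayed index condition $i_1\le\cdots\le i_s$ is a typo for strict inequality, which you have silently (and correctly) used.
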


\noindent {\it 2.4. Convention.} In the rest of paper, we will focus
our discussions on the case when $\mathbb{Q}(q)\subseteq k$,
$\mathbf{char}(q)=\ell ~(\ge 3)$ and $U=\mathfrak
u_q(\mathfrak{sl}_n)$ with $n>2$ (since for the rank $1$ case, there
are sufficient discussions in the literature).

\section{Loewy filtration of $\mathcal{A}_q^{(s)}(n, \bold m)$ and its rigidity}

\noindent {\it 3.1. Truncated objects $\mathcal{A}_q(n, \bold m)$.}
Set $\bold m=(m\ell{-}1, \cdots, m\ell{-}1)\in\mathbb{Z}_+^n$,
$m\in\mathbb{N}$, and
\begin{gather*}
\mathcal{A}_q(n, \bold m):=\span_k\bigl\{\,
x^{(\alpha)}\in\mathcal{A}_q(n)\,\,\big|\,\,\alpha\leq\bold m\,\bigr\}, \\
\mathcal{A}_q^{(s)}(n, \bold m):=\span_k\bigl\{\,
x^{(\alpha)}\in\mathcal{A}_q(n, \bold
m)\,\,\big|\,\,|\,\alpha\,|=s\,\bigr\},
\end{gather*}
then $\mathcal{A}_q(n, \bold
m)=\bigoplus_{s=0}^N\mathcal{A}_q^{(s)}(n, \bold m)$, where
$N=|\,\bold m\,|=n(m\ell{-}1)$.
\begin{prop}
$(1)$ $\mathcal{A}_q^{(s)}(n, \bold m)$ $(0\leq s\leq N)$ are
$\mathfrak{u}_q(\mathfrak{sl}_n)$-submodules.

$(2)$ $\dim\mathcal{A}^{(s)}_q(n, \bold m)=$ the coefficient of
$\,t^s$ of polynomial $P_{n, m\ell}(t)$
$=(1+t+t^2+\cdots+t^{m\ell-1})^{n}=\sum_{i=0}^{\lfloor
\frac{s}{m\ell}\rfloor}(-1)^i\binom{n}{i}\binom{n{+}s{-}im\ell{-}1}{n{-}1}$.
\end{prop}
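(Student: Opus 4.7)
The claim splits naturally into a module-theoretic part (1) and a generating-function part (2), and I would handle them independently.

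For part (1), the plan is to read off from the formulas in Proposition 2.4 that the four generating actions of $\mathfrak{u}_q(\mathfrak{sl}_n)$ preserve both the total degree $|\,\alpha\,|$ and the componentwise bound $\alpha\le\bold m$. Indeed, since $e_i$ sends $x^{(\beta)}$ to a scalar multiple of $x^{(\beta+\varepsilon_i-\varepsilon_{i+1})}$ and $f_i$ to a multiple of $x^{(\beta-\varepsilon_i+\varepsilon_{i+1})}$, the total degree $\sum\beta_j$ is visibly fixed, while $K_i^{\pm 1}$ acts diagonally. Thus $\mathcal{A}_q^{(s)}(n,\bold m)$ is closed under the Cartan action and under degree-preserving raising/lowering. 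The only issue is that a single application of $e_i$ could push $\beta_i$ from $m\ell{-}1$ up to $m\ell$, and similarly $f_i$ could push $\beta_{i+1}$ up to $m\ell$; here I would use that under the assumption $\mathbf{char}(q)=\ell$, we have $q^{m\ell}=q^{-m\ell}$ so that $[\,m\ell\,]=0$, hence the prefactors $[\beta_i{+}1]$ in $e_i$ and $[\beta_{i+1}{+}1]$ in $f_i$ kill the offending term. The dual situation $\beta_i<0$ or $\beta_{i+1}<0$ is handled by the standing convention $x^{(\alpha)}=0$ whenever some $\alpha_i<0$. Together this gives $U.\mathcal{A}_q^{(s)}(n,\bold m)\subseteq\mathcal{A}_q^{(s)}(n,\bold m)$.

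For part (2), the plan is purely combinatorial. By definition, $\dim\mathcal{A}_q^{(s)}(n,\bold m)$ is the number of tuples $\alpha=(\alpha_1,\dots,\alpha_n)\in\mathbb{Z}_+^n$ with $0\le\alpha_j\le m\ell-1$ and $\sum_j\alpha_j=s$. This count is precisely the coefficient of $t^s$ in the generating function
\[
P_{n,m\ell}(t)=\bigl(1+t+\cdots+t^{m\ell-1}\bigr)^n=\Bigl(\frac{1-t^{m\ell}}{1-t}\Bigr)^{\!n}.
\]
Expanding the numerator by the binomial theorem and the denominator by the negative binomial series,
\[
P_{n,m\ell}(t)=\sum_{i=0}^{n}(-1)^i\binom{n}{i}t^{im\ell}\cdot\sum_{j\ge 0}\binom{n+j-1}{n-1}t^j,
\]
and matching $im\ell+j=s$ (so $j=s-im\ell\ge 0$ forces $i\le\lfloor s/(m\ell)\rfloor$) yields the stated closed form
\[
\sum_{i=0}^{\lfloor s/(m\ell)\rfloor}(-1)^i\binom{n}{i}\binom{n+s-im\ell-1}{n-1}.
\]

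The main (and only) subtlety is the vanishing argument for $e_i$ and $f_i$ at the boundary $\beta_i=m\ell-1$ in part (1); everything else is either a direct check on the action formulas or a routine binomial manipulation. No dimension argument or indecomposability is needed here, so I would keep the write-up short and defer the deeper structural analysis to the subsequent Loewy-filtration results.
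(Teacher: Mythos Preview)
Your proposal is correct and follows essentially the same approach as the paper: for (1) you check the generators case by case, using $[m\ell]=0$ at the boundary (the paper states this vanishing more tersely as a consequence of Proposition 2.4), and for (2) you identify the dimension as the coefficient of $t^s$ in $\bigl((1-t^{m\ell})/(1-t)\bigr)^n$ and expand, exactly as the paper does. The only cosmetic difference is that the paper packages the counting via a homomorphism $\phi:\mathcal{A}_q(n)\to k[t]$ sending each $x_i\mapsto t$, but this is just another way of saying the same generating-function identity.
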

\begin{proof} (1)
For any $x^{(\alpha)}\in \mathcal{A}_q^{(s)}(n, \bold m)$:

(i) if $\alpha_i=m\ell-1$, or $\alpha_i<m\ell-1$ and
$\alpha_{i+1}=0$, then Proposition 2.4, (2.1) yields
$e_i.\,x^{(\alpha)}=0$;

(ii) if $\alpha_i<m\ell-1$ and $\alpha_{i+1}>0$, then
$\alpha+\varepsilon_i-\varepsilon_{i+1}\le \bold m$, and
$x^{(\alpha+\varepsilon_i-\varepsilon_{i+1})}\in\mathcal{A}_q^{(s)}(n,
\bold m)$. Thus, $e_i.\,x^{(\alpha)}\in\mathcal{A}_q^{(s)}(n, \bold
m)$.

Similarly, Proposition 2.4, (2.2)--(2.4) imply $f_i.\,x^{(\alpha)},\
K_i^{\pm1}.\,x^{(\alpha)}\in\mathcal{A}_q^{(s)}(n, \bold m)$.

Hence, $\mathcal{A}_q^{(s)}(n, \bold m)$ is a
$\mathfrak{u}_q(\mathfrak{sl}_n)$-submodule.

\smallskip
(2) Note that $\bigl\{x^{(\alpha)}\in\mathcal{A}_q(n, \bold
m)\,\big|\,|\,\alpha\,|=s\bigr\}$ is a basis of
$\mathcal{A}_q^{(s)}(n,\bold m)$. The homomorphism $\phi:
\mathcal{A}_q(n)\longrightarrow k[\,t\,]$ with $\phi(x_i)=t,\
\phi(x_i^{(\ell)})=t^\ell$ restricted to $\mathcal{A}_q^{(s)}(n,
\bold m)$ counts up the cardinal of the above basis set as the
coefficients of $t^s$ of polynomial $P_{n,m\ell}(t)$. The final
identity is due to the expansion of generating function
$(1{-}t^{m\ell})^n(1{-}t)^{-n}$.
\end{proof}

\noindent {\it 3.2. Energy degrees and action rules.} In this
subsection, we introduce an important concept, the so-called {\it
``energy degree"}. We will see that this captures the essential
features of the submodule structures in the root of unity case.

For any rational number $x$, denote by $\lfloor x\rfloor$ the
integer part of $x$.

\begin{defi}
For any $x^{(\alpha)}\in\mathcal{A}_q(n, \bold m)$ or
$\mathcal{A}_q(n)$, the {\it energy degree} of $x^{(\alpha)}$,
denoted by $\mathrm{Edeg}\,x^{(\alpha)}$, is defined as
$$
\mathrm{Edeg}\,x^{(\alpha)}:=\sum_{i=1}^n\left\lfloor
\frac{\alpha_i}{\ell}\right\rfloor
=\sum_{i=1}^n\mathrm{Edeg}_i\,x^{(\alpha)},
$$
where $\mathrm{Edeg}_i\,x^{(\alpha)}$ indicates the {\it $i$-th
energy degree} of $x^{(\alpha)}$, i.e.,
$\mathrm{Edeg}_i\,x^{(\alpha)}:=\lfloor
\frac{\alpha_i}{\ell}\rfloor$.

In general, for any $x\in\mathcal{A}_q(n, \bold m)$ or
$\mathcal{A}_q(n)$, define
$$
\mathrm{Edeg}\,x:=\mathrm{max}\bigl\{\,\mathrm{Edeg}\,x^{(\alpha)}\,\bigl|\,x
=\sum k_\alpha x^{(\alpha)},\ k_\alpha\in k^* \,\bigr\}.
$$
\end{defi}

\begin{prop}
$\mathrm{Edeg}\,(u.\, x^{(\alpha)})\le \mathrm{Edeg}\,x^{(\alpha)}$,
for any $u\in \mathfrak{u}_q(\mathfrak{sl}_n)$ and
$x^{(\alpha)}\in\mathcal{A}_q^{(s)}(n, \bold m)$ or
$\mathcal{A}_q^{(s)}(n)$. In particular, $\mathrm{Edeg}_i\,(u.\,
x^{(\alpha)})\leq \mathrm{Edeg}_i\,x^{(\alpha)}$ for each $i$.
\end{prop}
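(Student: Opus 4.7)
The plan is to reduce everything to a direct verification on the Chevalley generators $e_i$, $f_i$, $K_i^{\pm 1}$ of $\mathfrak{u}_q(\mathfrak{sl}_n)$, and then propagate by induction on word length. First, since $\mathrm{Edeg}$ of a $k$-linear combination $\sum k_\alpha\,x^{(\alpha)}$ is defined as the maximum of $\mathrm{Edeg}\,x^{(\alpha)}$ over the summands with $k_\alpha\neq 0$, it suffices to prove the stronger pointwise claim: for every monomial $w$ in the generators, the basis element $w.\,x^{(\alpha)}$ is a scalar multiple of a single $x^{(\gamma)}$ (or zero), and whenever this scalar is nonzero, $\mathrm{Edeg}_i\,x^{(\gamma)}\le \mathrm{Edeg}_i\,x^{(\alpha)}$ for every $i$. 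By an obvious induction on the length of $w$, this reduces to checking the single-generator case.

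For the single-generator case I would split into three subcases. The case of $K_i^{\pm 1}$ is immediate from (2.3)--(2.4), since $K_i^{\pm 1}.\,x^{(\alpha)}$ is a scalar multiple of $x^{(\alpha)}$ itself. For $e_i$, formula (2.1) gives $e_i.\,x^{(\alpha)} = [\alpha_i{+}1]\,x^{(\alpha+\varepsilon_i-\varepsilon_{i+1})}$. Coordinates $j \ne i, i{+}1$ are untouched, so $\mathrm{Edeg}_j$ is preserved; the $(i{+}1)$-st coordinate decreases by $1$, and from $\lfloor (\alpha_{i+1}{-}1)/\ell\rfloor\le \lfloor\alpha_{i+1}/\ell\rfloor$ we see $\mathrm{Edeg}_{i+1}$ cannot increase. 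The only delicate coordinate is the $i$-th, where $\alpha_i\mapsto \alpha_i{+}1$: writing $\alpha_i = a_0 + a_1\ell$ with $0\le a_0<\ell$, one has $\lfloor(\alpha_i{+}1)/\ell\rfloor=\lfloor\alpha_i/\ell\rfloor$ except precisely when $a_0=\ell{-}1$; but in that case $[\alpha_i{+}1]=[\ell(a_1{+}1)]=0$ by Lemma 2.1, so the entire image $e_i.\,x^{(\alpha)}$ vanishes and there is nothing to check. The case of $f_i$ is symmetric via (2.2), with the role of $[\alpha_i{+}1]$ played by $[\alpha_{i+1}{+}1]$.

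Finally, I would assemble the induction: given $u\in \mathfrak{u}_q(\mathfrak{sl}_n)$, write it as a $k$-linear combination of monomials $w$ in the generators, and for each such $w = g_1 g_2 \cdots g_r$ iterate the single-generator step applied to $g_r.\,x^{(\alpha)}$, then $g_{r-1}.\,(g_r.\,x^{(\alpha)})$, and so on. Each step either kills the current monomial or replaces it by a scalar multiple of another monomial whose $\mathrm{Edeg}_i$ does not exceed the previous one for any $i$; hence the componentwise bound persists through the whole word. Taking the maximum over the summands and then over $i$ yields both parts of the statement. The only nontrivial ingredient is the dovetailing observed in the $e_i$/$f_i$ analysis, namely that the quantum bracket coefficient vanishes at exactly the boundary values where the corresponding coordinate would otherwise jump across a multiple of $\ell$; once this is recorded, the proof collapses to a one-line verification.
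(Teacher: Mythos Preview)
Your proposal is correct and follows essentially the same route as the paper's proof: both reduce to checking the generators $e_i$, $f_i$, $K_i^{\pm 1}$ on a basis monomial $x^{(\alpha)}$, and the key observation in both is that the only coordinate which could increase (namely $\alpha_i\mapsto\alpha_i{+}1$ under $e_i$, or $\alpha_{i+1}\mapsto\alpha_{i+1}{+}1$ under $f_i$) does so precisely when the quantum integer coefficient $[\alpha_i{+}1]$ (resp.\ $[\alpha_{i+1}{+}1]$) vanishes. One small remark: the vanishing $[\ell(a_1{+}1)]=0$ follows directly from $\mathbf{char}(q)=\ell$ (i.e., $q^{2\ell}=1$) rather than from Lemma~2.1, which concerns Gaussian binomials; this is a labeling slip, not a gap.
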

\begin{proof}  It suffices to check the behavior of generators
$e_i, f_i, K_i^{\pm1}~(1\leq i<n)$ of
$\mathfrak{u}_q(\mathfrak{sl}_n)$ acting on any basis element
$x^{(\alpha)}\in\mathcal{A}_q^{(s)}(n, \bold m)$ or
$\mathcal{A}_q^{(s)}(n)$.

\smallskip
(1) Note that
$e_i.\,x^{(\alpha)}=[\alpha_i{+}1]\,x^{(\alpha+\varepsilon_i-\varepsilon_{i+1})}$,
by Proposition 2.4, (2.1).

If $e_i.\,x^{(\alpha)}\ne 0$, then $\ell\nmid(\alpha_i+1)$ and
$\alpha_{i+1}>0$. Observing
$$
\mathrm{Edeg}\,x^{(\alpha+\varepsilon_i-\varepsilon_{i+1})}\\
=\left\lfloor\frac{\alpha_1}{\ell}\right\rfloor+\cdots +\left\lfloor
\frac{\alpha_{i-1}}{\ell}\right\rfloor+\left\lfloor\frac{\alpha_i{+}1}{\ell}\right\rfloor
+\left\lfloor \frac{\alpha_{i+1}{-}1}{\ell}\right\rfloor+
\cdots+\left\lfloor \frac{\alpha_n}{\ell}\right\rfloor,
$$
we get
$$
\mathrm{Edeg}\,x^{(\alpha+\varepsilon_i-\varepsilon_{i+1})}-\mathrm{Edeg}\,x^{(\alpha)}
=\left\lfloor\frac{\alpha_i{+}1}{\ell}\right\rfloor+
\left\lfloor\frac{\alpha_{i+1}{-}1}{\ell}\right\rfloor-
  \left\lfloor\frac{\alpha_i}{\ell}\right\rfloor- \left\lfloor\frac{\alpha_{i+1}}{\ell}\right\rfloor.
$$

Obviously,
$\lfloor\frac{\alpha_{i+1}-1}{\ell}\rfloor-\lfloor\frac{\alpha_{i+1}}{\ell}\rfloor\leq0
$. If
$\lfloor\frac{\alpha_i+1}{\ell}\rfloor-\lfloor\frac{\alpha_i}{\ell}\rfloor>0
$, then $\alpha_i\equiv \ell{-}1 (\text{mod}\, \ell)$. It is
contrary to the assumption above, so
$\lfloor\frac{\alpha_i{+}1}{\ell}\rfloor-\lfloor\frac{\alpha_i}{\ell}\rfloor\leq
0$.

Therefore, $\mathrm{Edeg}\,(e_i.\,x^{(\alpha)})\leq
\mathrm{Edeg}\,x^{(\alpha)}$.

Similarly, by Proposition 2.4, (2.2)--(2.4), we get
$\mathrm{Edeg}\,(f_i.\,x^{(\alpha)})\leq
 \mathrm{Edeg}\,x^{(\alpha)}$,
$\mathrm{Edeg}\,(K_i^{\pm1}.\,x^{(\alpha)})=\mathrm{Edeg}\,x^{(\alpha)}$.

\smallskip
(2) In the proof of (1), we actually show that
$\mathrm{Edeg}_j\,(u.\, x^{(\alpha)})\leq
\mathrm{Edeg}_j\,x^{(\alpha)}$ for each $j$ and for arbitrary $u\in
\mathfrak{u}_q(\mathfrak{sl}_n)$,
$x^{(\alpha)}\in\mathcal{A}_q^{(s)}(n, \bold m)$ or
$\mathcal{A}_q^{(s)}(n)$.
\end{proof}

\begin{prop}
Given $x^{(\alpha)},x^{(\beta)}\in \mathcal{A}_q^{(s)}(n, \bold m)$
or $\mathcal{A}_q^{(s)}(n)$ with
$\mathrm{Edeg}\,x^{(\alpha)}=\mathrm{Edeg}\,x^{(\beta)}$. If
$\mathrm{Edeg}_i\,x^{(\alpha)}\ne\mathrm{Edeg}_i\,x^{(\beta)}$ for
some $i\ (1\leq i\leq n)$, then for any $u,\, v\in
\mathfrak{u}_q(\mathfrak{sl}_n)$, $u.\,x^{(\alpha)}\neq
x^{(\beta)}$, $v.\,x^{(\beta)}\neq x^{(\alpha)}$. Namely,
$x^{(\alpha)}\not\in\mathfrak{u}_q(\mathfrak{sl}_n).\,x^{(\beta)}$,
$x^{(\beta)}\notin\mathfrak{u}_q(\mathfrak{sl}_n).\,x^{(\alpha)}$.
\end{prop}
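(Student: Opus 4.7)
The plan is to argue by contradiction, leveraging the coordinate-wise refinement of Proposition~3.5 (namely $\mathrm{Edeg}_i\,(u.\,x^{(\alpha)}) \leq \mathrm{Edeg}_i\,x^{(\alpha)}$ for each $i$, which was already extracted in part (2) of its proof). Everything reduces to observing that the hypothesis $\mathrm{Edeg}\,x^{(\alpha)} = \mathrm{Edeg}\,x^{(\beta)}$ forces the chain of per-coordinate inequalities to saturate.

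Suppose, towards a contradiction, that $u.\,x^{(\alpha)} = x^{(\beta)}$ for some $u \in \mathfrak{u}_q(\mathfrak{sl}_n)$. Since the right-hand side is a single basis monomial, the monomial support of the left-hand side is $\{x^{(\beta)}\}$, so by the maximum-over-support definition of $\mathrm{Edeg}_i$ on general elements (Definition~3.2) one gets $\mathrm{Edeg}_i\,x^{(\beta)} = \mathrm{Edeg}_i\,(u.\,x^{(\alpha)}) \leq \mathrm{Edeg}_i\,x^{(\alpha)}$ for every $1 \leq i \leq n$. Summing over $i$ produces
\[
\mathrm{Edeg}\,x^{(\beta)} \;=\; \sum_{i=1}^n \mathrm{Edeg}_i\,x^{(\beta)} \;\leq\; \sum_{i=1}^n \mathrm{Edeg}_i\,x^{(\alpha)} \;=\; \mathrm{Edeg}\,x^{(\alpha)}.
\]
By hypothesis the two outer quantities coincide, so every non-negative summand $\mathrm{Edeg}_i\,x^{(\alpha)} - \mathrm{Edeg}_i\,x^{(\beta)}$ must vanish, i.e.\ $\mathrm{Edeg}_i\,x^{(\alpha)} = \mathrm{Edeg}_i\,x^{(\beta)}$ for every $i$. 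This contradicts the standing assumption that the two profiles disagree at some index. Interchanging $\alpha$ and $\beta$ yields the second assertion $v.\,x^{(\beta)} \neq x^{(\alpha)}$, and the conclusions about the cyclic submodules $\mathfrak{u}_q(\mathfrak{sl}_n).\,x^{(\alpha)}$ and $\mathfrak{u}_q(\mathfrak{sl}_n).\,x^{(\beta)}$ then follow by linearity.

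There is no genuine technical obstacle here; the statement is essentially the ``equality case'' of Proposition~3.5 combined with the elementary fact that a sum of non-negative integers vanishes only if each summand does. The one point that deserves care is the passage from the bound on monomials to the bound on the \emph{element} $u.\,x^{(\alpha)}$, but this is handled by the max-over-support convention in Definition~3.2 together with the fact that when $u.\,x^{(\alpha)}$ equals a single basis monomial $x^{(\beta)}$ the max is attained at $\beta$. The real significance of the proposition is structural rather than technical: it refines the total-energy invariant into a componentwise obstruction, and this is precisely the separation that later fuels the ``intertwinedly-lifting'' analysis of the Loewy structure of $\mathcal{A}_q^{(s)}(n,\bold m)$ announced in Section~1.
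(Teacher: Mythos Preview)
Your proof is correct and essentially identical to the paper's: both argue by contradiction using the coordinate-wise inequality $\mathrm{Edeg}_i(u.\,x^{(\alpha)}) \le \mathrm{Edeg}_i\,x^{(\alpha)}$ (this is Proposition~3.3 in the paper, not 3.5 as you cite), and both exploit the equality of total energy degrees to force a contradiction with the assumed coordinate discrepancy. The only cosmetic difference is that the paper first isolates a specific index $j$ where $\mathrm{Edeg}_j\,x^{(\alpha)} < \mathrm{Edeg}_j\,x^{(\beta)}$ and contradicts directly at that index, whereas you sum the coordinate inequalities and invoke the equality of totals; the two formulations are logically equivalent, and your final remark about ``linearity'' is unnecessary since the non-membership in the cyclic submodule is already a literal restatement of $v.\,x^{(\beta)} \neq x^{(\alpha)}$ for all $v$.
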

\begin{proof}  Without
loss of generality, we assume that
$\mathrm{Edeg}_i\,x^{(\alpha)}>\mathrm{Edeg}_i\,x^{(\beta)}$. Since
$\mathrm{Edeg}\,x^{(\alpha)}=\mathrm{Edeg}\,x^{(\beta)}$, there must
exist a $j\ne i$ with $1\leq j\leq n$ such that
$\mathrm{Edeg}_j\,x^{(\alpha)}<\mathrm{Edeg}_j\,x^{(\beta)}$.

\smallskip
(1) If there exists $u\in \mathfrak{u}_q(\mathfrak{sl}_n)$ such that
$u.\, x^{(\alpha)}= x^{(\beta)}$, by Proposition 3.3,
$\mathrm{Edeg}_r\,x^{(\beta)}=\mathrm{Edeg}_r\, (u.\,
x^{(\alpha)})\leq \mathrm{Edeg}_r\,x^{(\alpha)}$ for $1\le r\le n$.
It contradicts the fact
$\mathrm{Edeg}_j\,x^{(\alpha)}<\mathrm{Edeg}_j\,x^{(\beta)}$ for
some $j\,(\ne i)$. Hence, $ u.\, x^{(\alpha)}\neq x^{(\beta)}$, for
any $u\in \mathfrak{u}_q(\mathfrak{sl}_n)$.

\smallskip
(2) Using the assumption
$\mathrm{Edeg}_i\,x^{(\alpha)}>\mathrm{Edeg}_i\,x^{(\beta)}$, by a
similar argument of (1), we can derive $ v.\,x^{(\beta)}\neq
x^{(\alpha)}$, for any $v\in \mathfrak{u}_q(\mathfrak{sl}_n)$.
\end{proof}

The proof of Theorem 2.5 (2) (see \cite{HU}, 4.2) motivates the
following observation.

\begin{prop}
Given $x^{(\alpha)}, x^{(\beta)}\in\mathcal{A}_q^{(s)}(n, \bold m)$
or $\mathcal{A}_q^{(s)}(n)$ with
$\mathrm{Edeg}_i\,x^{(\alpha)}=\mathrm{Edeg}_i\,x^{(\beta)}$ for
each $i$ $(1\leq i \leq n)$, then there exist $u,\ v\in
\mathfrak{u}_q(\mathfrak{sl}_n)$ such that $u.\, x^{(\alpha)}=
x^{(\beta)}$,  $v.\, x^{(\beta)}= x^{(\alpha)}$. In this case,
$\mathfrak{u}_q(\mathfrak{sl}_n).\,x^{(\alpha)}=\mathfrak{u}_q(\mathfrak{sl}_n).\,x^{(\beta)}$.
\end{prop}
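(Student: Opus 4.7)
\noindent The plan is to reduce the statement to the simplicity of the restricted homogeneous component $\mathcal{A}_q^{(s_0)}(n,\bold 1)$ from Theorem 2.5(2) and then transport the resulting transition to the ambient module $\mathcal{A}_q^{(s)}(n,\bold m)$ (or $\mathcal{A}_q^{(s)}(n)$) via the arithmetic of $q$-binomials at roots of unity recorded in Lemma 2.2. Writing $\alpha_i=a_i\ell+\alpha_i^{(0)}$ and $\beta_i=a_i\ell+\beta_i^{(0)}$ with $0\le\alpha_i^{(0)},\beta_i^{(0)}<\ell$, the hypothesis $\mathrm{Edeg}_i\,x^{(\alpha)}=\mathrm{Edeg}_i\,x^{(\beta)}$ forces the integer parts $a_i$ to coincide, and $|\alpha|=|\beta|=s$ gives $|\alpha^{(0)}|=|\beta^{(0)}|=:s_0$. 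Hence $x^{(\alpha^{(0)})}$ and $x^{(\beta^{(0)})}$ are two non-zero basis elements of the simple $\mathfrak{u}_q(\mathfrak{sl}_n)$-module $\mathcal{A}_q^{(s_0)}(n,\bold 1)$.

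\medskip

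\noindent Following the construction behind Theorem 2.5(2), I would first exhibit a single monomial $u_0$ in the generators $e_i,f_i$ such that $u_0.\,x^{(\alpha^{(0)})}=c_0\,x^{(\beta^{(0)})}$ for some $c_0\in k^*$: apply a sequence of $e_i^{r_i}$-actions raising $\alpha^{(0)}$ through intermediate residue vectors all lying in $[0,\ell-1]^n$ to the canonical highest-weight residue $\bold s_0$ of Theorem 2.5(2), followed by a dual sequence of $f_i^{s_i}$-actions lowering $\bold s_0$ to $\beta^{(0)}$. Every exponent occurring is less than $\ell$, and every intermediate residue vector lies in $[0,\ell-1]^n$, so every coefficient $[\gamma_j+1]$ encountered belongs to $\{[1],\ldots,[\ell-1]\}$ and is non-zero.

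\medskip

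\noindent The main step is then to apply this same monomial $u_0$ to $x^{(\alpha)}$ in the ambient module. At each intermediate $e_i$-action the coefficient produced is $[\gamma_i+1+a_i\ell]$ with $0<\gamma_i+1\le\ell-1$; Lemma 2.2(1), applied with $m=\gamma_i+1+a_i\ell$ and $r=1$ (so $m_0=\gamma_i+1$, $m_1=a_i$, $r_0=1$, $r_1=0$), yields $[\gamma_i+1+a_i\ell]=\pm[\gamma_i+1]\ne 0$. Analogous identities govern the scalars arising from $f_i$-actions, while $K_i^{\pm 1}$ acts diagonally with a non-zero eigenvalue. Iterating shows $u_0.\,x^{(\alpha)}=c_0'\,x^{(\beta)}$ for some $c_0'\in k^*$; setting $u:=(c_0')^{-1}u_0$ gives $u.\,x^{(\alpha)}=x^{(\beta)}$, and a symmetric construction produces $v$ with $v.\,x^{(\beta)}=x^{(\alpha)}$. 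The equality $\mathfrak{u}_q(\mathfrak{sl}_n).\,x^{(\alpha)}=\mathfrak{u}_q(\mathfrak{sl}_n).\,x^{(\beta)}$ is then immediate.

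\medskip

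\noindent The principal obstacle I foresee is the first step: verifying that Theorem 2.5(2) actually delivers a single safe monomial $u_0$ rather than merely a $\mathfrak{u}_q(\mathfrak{sl}_n)$-linear combination, because distinct monomial summands would incur different $\pm$ signs on lifting and could conceivably cancel. A cleaner alternative, which I would adopt if this bookkeeping became unwieldy, is to induct on the distance $\sum_i|\alpha_i^{(0)}-\beta_i^{(0)}|$: whenever this distance is positive, locate adjacent coordinates whose residues can be shifted by one unit via a single $e_i$ or $f_i$ that remains in $[0,\ell-1]$, thereby strictly decreasing the distance; the inductive hypothesis, combined with the same Lemma 2.2(1) identity, then transports the move to $x^{(\alpha)}$ with only a non-zero scalar correction.
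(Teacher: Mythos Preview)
Your proposal is correct and follows essentially the same route as the paper: both arguments route through the canonical highest-weight residue vector $\bold s_0=(\ell{-}1,\ldots,\ell{-}1,r_i,0,\ldots,0)$ and use explicit monomials in the $e_i,f_i$ whose successive coefficients are of the form $[a_i\ell+j]$ with $0<j<\ell$, hence nonzero. The only organizational difference is that the paper works directly in the ambient module $\mathcal{A}_q^{(s)}(n,\bold m)$ throughout (setting $\eta=\ell\cdot(a_1,\ldots,a_n)+\bold s_0$ and proving separately that $\mathfrak{u}_q(\mathfrak n^-)$ carries $x^{(\eta)}$ onto $x^{(\beta)}$ and $\mathfrak{u}_q(\mathfrak n^+)$ carries $x^{(\beta)}$ back to $x^{(\eta)}$), whereas you first solve the problem in $\mathcal{A}_q^{(s_0)}(n,\bold 1)$ and then lift via Lemma~2.2(1). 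The paper also exploits the module-algebra structure (the coproduct identity $\Delta f_j=f_j\otimes 1+K_j^{-1}\otimes f_j$ together with the factorization $x^{(\eta'+h_k\varepsilon_k)}=c\,x^{(\eta')}x^{(h_k\varepsilon_k)}$) to streamline the inductive step, which sidesteps entirely the single-monomial-versus-linear-combination worry you flagged; your alternative distance-induction would achieve the same end.
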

\begin{proof}
Put $m_j:=\mathrm{Edeg}_j\,x^{(\beta)}$ and
$r:=s-\sum_{j=1}^{n}m_j\ell$, for $x^{(\beta)}\in
\mathcal{A}_q^{(s)}(n, \bold m)$. Clearly, $0\leq r\leq
n(\ell{-}1)$. Write $r=(i{-}1)(\ell{-}1)+r_i$ with $1\le i\le n$,
$0\le r_i\le \ell{-}1$.

Set $\gamma:=({\ell{-}1, \cdots, \ell{-}1}, r_i,
0, \cdots, 0)$ and $\eta:=(m_1\ell, \cdots,m_n\ell)+\gamma$. Then
$|\,\gamma\,|=r$, and $|\,\eta\,|=s$, that is, $x^{(\eta)}\in
\mathcal{A}_q^{(s)}(n, \bold m)$.

Write $\beta=\sum_{j=1}^n(m_j\ell+h_j)\varepsilon_j$ with $0\le
h_j\le \ell{-}1$, $\sum_{j=1}^nh_j=r$ (since $|\,\beta\,|=s$).
Denote by $k$ the last ordinal number with $h_k\ne 0$ for the
$n$-tuple $(h_1,\cdots,h_n)$. So, $k\geq i$ if $r_i\neq0$, and $k\ge
i{-}1$ if $r_i=0$.

\smallskip
(I) Note that the pair $(x^{(\eta)}, x^{(\beta)})$ satisfies the
hypothesis of our Proposition. Firstly, for the given pair
$(x^{(\eta)}, x^{(\beta)})$, we can prove the following Claims (A),
(B).

\smallskip
Claim (A): There exists $u_1\in \mathfrak u_q(\mathfrak{n}^-)$, such
that $u_1.\,x^{(\eta)}=x^{(\beta)}$.

\smallskip
Case (1). If $r_i\ge h_k$, then by Proposition 2.4, (2.2) \& Lemma
2.2 (1), we get
\begin{equation*}
\begin{aligned}
f_{k-1}^{h_k}\cdots f_i^{h_k}.\,x^{(\eta)}
&=\prod_{z=i+1}^k\prod_{j=1}^{h_k}[\,m_z\ell{+}j\,]\,x^{(\eta-h_k\varepsilon_i+h_k\varepsilon_k)}\\
&=q^{h_km_k\ell-\eta'*h_k\varepsilon_k}\prod_{z=i+1}^{k}\prod_{j=1}^{h_k}[\,m_z\ell{+}j\,]\,x^{(\eta')}x^{(h_k\varepsilon_k)}\neq0,
\end{aligned}
\end{equation*}
where $
\eta'=\eta-h_k\varepsilon_i=\sum_{j=1}^{i-1}(m_j\ell{+}\ell{-}1)\varepsilon_j+(m_i\ell{+}r_i{-}h_k)\varepsilon_i
+\sum_{j=i+1}^n(m_j\ell)\varepsilon_j$.

\smallskip
Case (2). If $r_i<h_k$, then by Proposition 2.4, (2.2) \& Lemma 2.2
(1), we get
\begin{equation*}
\begin{aligned}
f_{k-1}^{h_k-r_i}\cdots& f_{i-1}^{h_k-r_i}f_{k-1}^{r_i}\cdots
f_i^{r_i}.\,x^{(\eta)}\\
&\qquad
=\Bigl(\prod_{z=i+1}^k\prod_{j=1}^{r_i}[\,m_z\ell{+}j\,]\Bigr)\Bigl(\prod_{z'={i}}^{k-1}\prod_{j'=1}^{h_k-r_i}
[\,m_{z'}\ell{+}j'\,]\Bigr)\times \\
&\qquad\qquad \times[\,m_k\ell{+}r_i{+}1\,]\cdots
[\,m_k\ell{+}h_k\,]\,x^{(\eta-(h_k{-}r_i)\varepsilon_{i-1}-r_i\varepsilon_i+h_k\varepsilon_k)}\\
&\qquad=\Bigl(\prod_{z=i+1}^k\prod_{j=1}^{r_i}[\,m_z\ell{+}j\,]\Bigr)
\Bigl(\prod_{z'={i}}^{k-1}\prod_{j'=1}^{h_k-r_i}[\,m_{z'}\ell{+}j'\,]\Bigr)\times\\
&\qquad\qquad \times[\,m_k\ell{+}r_i{+}1\,]\cdots
[\,m_k\ell{+}h_k\,]\,q^{h_km_k\ell-\eta'*h_k\varepsilon_k}\,x^{(\eta')}x^{(h_k\varepsilon_k)}\neq0,
\end{aligned}
\end{equation*}
where $\eta'=\eta-(h_k{-}r_i)\varepsilon_{i-1}-r_i\varepsilon_i$.

Set $\beta':=\beta-h_k\varepsilon_k$. For the pair $(x^{(\eta')},
x^{(\beta')})$, using an induction on $\eta$ (at first, noting that
the argument holds for $\eta=\varepsilon_1=\lambda_1$), the same
argument of the proof of Theorem 2.5 (2) (see \cite{HU}, 4.2), there
exists $u_1'\in \mathfrak{u}_q(\mathfrak n^-)$ generated by $f_j\,
(j<k{-}1)$, such that $u_1'.\, x^{(\eta')}=x^{(\beta')}$. Note that
$f_j.\,x^{(h_k\varepsilon_k)}=0 \ (j<k{-}1)$ and $\Delta
f_j=f_j\otimes 1+K_j^{-1}\otimes f_j$, then
$$
u_1'.\,(x^{(\eta')}x^{(h_k\varepsilon_k)})=(u_1'.\,x^{(\eta')})\,x^{(h_k\varepsilon_k)}
=x^{(\beta')}x^{(h_k\varepsilon_k)}=q^{\beta'*h_k\varepsilon_k-m_kh_k\ell}\,x^{(\beta)}\neq0.
$$
Combining with both cases (1) and (2), we get the claim as desired.

\smallskip
Conversely, for the given pair $(x^{(\beta)}, x^{(\eta)})$, we can
prove the following

\smallskip
Claim (B):  There exists $u_2\in \mathfrak u_q(\mathfrak{n}^+)$,
such that $u_2.\,x^{(\beta)}=x^{(\eta)}$.

\smallskip
Case (i). If $r_i\geq h_k$, then
\begin{equation*}
e_i^{h_k}\cdots e_{k-1}^{h_k}.\, x^{(\eta'+h_k\varepsilon_k)}
=\Bigl(\prod_{z=i+1}^{k-1}\prod_{j=1}^{h_k}[m_z\ell{+}j]\Bigr)[m_i\ell{+}r_i{-}h_k{+}1]\cdots[m_i\ell{+}r_i]\,x^{(\eta)}\neq0,
\end{equation*}
where
$\eta'=\eta-h_k\varepsilon_i=\sum_{j=1}^{i-1}(m_j\ell{+}\ell{-}1)\varepsilon_j+(m_i\ell{+}r_i{-}h_k)\varepsilon_i
+\sum_{j=i+1}^n(m_j\ell)\varepsilon_j$.

\smallskip
Case (ii). If $r_i<h_k$, then
\begin{equation*}
\begin{aligned}
e_i^{r_i}\cdots e_{k-1}^{r_i}&e_{i-1}^{h_k-r_i}\cdots
e_{k-1}^{h_k-r_i}.\, x^{(\eta'+h_k\varepsilon_k)}\\
=&\,\Bigl(\prod_{z=i}^{k-1}\prod_{j=1}^{h_k-r_i}[m_z\ell{+}j]\Bigr)
[m_{i-1}\ell{+}\ell{-}1{-}(h_k{-}r_i){+}1]\cdots[m_{i-1}\ell{+}\ell{-}1]\times\\
&\times \Bigl(\prod_{z'=i}^{k-1}\prod_{j'=1}^{r_i}
[m_{z'}\ell{+}j']\Bigr)\,x^{(\eta)}\neq0,
\end{aligned}
\end{equation*}
where $\eta'=\eta-(h_k{-}r_i)\varepsilon_{i-1}-r_i\varepsilon_i$.

Inductively, for the pair $(\beta', \eta')$
$(\beta':=\beta\,{-}\,h_k\varepsilon_k)$, there exists $u_2'\in
\mathfrak{u}_q(\mathfrak n^+)$ generated by $e_j\, (j< k{-}1)$ such
that $u_2'.\,x^{(\beta')}=x^{(\eta')}$. Note that
$e_j.\,x^{(h_k\varepsilon_k)}=0$,
$K_j.\,x^{(h_k\varepsilon_k)}=x^{(h_k\varepsilon_k)}$ for $j<k{-1}$,
and $\Delta e_j=e_j\otimes K_j+1\otimes e_j$, then there are $c,
\,c'\in k^*$ such that
$$
u_2'.\,
x^{(\beta)}=c\,u_2'.\,(x^{(\beta')}\,x^{(h_k\varepsilon_k)})=
c\,(u_2'.\,x^{(\beta')})\,x^{(h_k\varepsilon_k)}=c'\;x^{(\eta'+h_k\varepsilon_k)}\neq0.
$$
Combining with both cases (i) and (ii), we get the claim as
required.

\smallskip
(II) For the given pair $(x^{(\alpha)}, x^{(\beta)})$ satisfying the
hypothesis of our Proposition, consider both pairs $(x^{(\eta)},
x^{(\beta)})$ and $(x^{(\alpha)}, x^{(\eta)})$, by Claims (A) and
(B), we see that there exists $u_1, u_2\in
\mathfrak{u}_q(\mathfrak{sl}_n)$ such that $u_1.\,
x^{(\eta)}=x^{(\beta)}, u_2.\, x^{(\alpha)}=x^{(\eta)}$. Set
$u=u_1u_2$, then $u.\, x^{(\alpha)}=x^{(\beta)}$. Similarly, there
exists $v\in \mathfrak{u}_q(\mathfrak{sl}_n)$ such that $v.\,
x^{(\beta)}=x^{(\alpha)}$.
\end{proof}

The observation below is more crucial to understand the submodules
structure of $\mathcal{A}_q^{(s)}(n, \bold m)$ and
$\mathcal{A}_q^{(s)}(n)$. Its proof is skillful.

\begin{prop}
Given $x^{(\alpha)},x^{(\beta)}\in \mathcal{A}_q^{(s)}(n, \bold m)$
or $\mathcal{A}_q^{(s)}(n)$ with $\mathrm{Edeg}
\,x^{(\alpha)}>\mathrm{Edeg}\,x^{(\beta)}$. If $\mathrm{Edeg}_i
\,x^{(\alpha)}\geq\mathrm{Edeg}_i\,x^{(\beta)}$ for each $i$, then
there exists $u\in \mathfrak{u}_q(\mathfrak{sl}_n)$ such that $u.\,
x^{(\alpha)}=x^{(\beta)}$. That is,
$\mathfrak{u}_q(\mathfrak{sl}_n).\,x^{(\beta)}\subsetneq\mathfrak{u}_q(\mathfrak{sl}_n).\,x^{(\alpha)}$.
\end{prop}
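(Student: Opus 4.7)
The plan is to prove existence of $u$ by induction on $d := \mathrm{Edeg}\,x^{(\alpha)} - \mathrm{Edeg}\,x^{(\beta)} \ge 1$, iterating a \emph{one-step reduction} that lowers the total energy by exactly one unit while preserving the coordinatewise dominance $m_i^\gamma \ge m_i^\beta$. Writing the Euclidean divisions $\alpha_i = m_i^\alpha\ell + h_i^\alpha$ and $\beta_i = m_i^\beta\ell + h_i^\beta$ with $0 \le h_i^\alpha, h_i^\beta < \ell$, the identity $|\alpha| = |\beta|$ combined with the trivial bound $\sum_i h_i^\beta \le n(\ell - 1)$ yields the crucial combinatorial estimate
\[
r := \sum_i h_i^\alpha = \sum_i h_i^\beta - d\ell \le n(\ell - 1) - d\ell \le (n-1)(\ell-1) - 1,
\]
using $d \ge 1$ and $\ell \ge 3$; this slack is exactly what drives the reduction.

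For the one-step reduction, pick any index $k$ with $m_k^\alpha > m_k^\beta$, so $m_k^\alpha \ge 1$. I will construct a representative $\alpha'$ with $m_i^{\alpha'} = m_i^\alpha$ for every $i$, $h_k^{\alpha'} = 0$, and $h_{k'}^{\alpha'} \le \ell - 2$ for some admissible neighbor $k' \in \{k-1, k+1\}$. Feasibility is a pigeonhole count: capping $h_{k'}^{\alpha'}$ at $\ell - 2$ and the remaining $n-2$ variables $\{h_j^{\alpha'}\}_{j \ne k, k'}$ at $\ell - 1$, the maximum reachable sum is $(\ell - 2) + (n - 2)(\ell - 1) = (n-1)(\ell - 1) - 1 \ge r$, which succeeds whether $k$ is interior (either choice of neighbor) or an endpoint (with its only neighbor). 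Proposition 3.5 supplies $w \in \mathfrak{u}_q(\mathfrak{sl}_n)$ with $w.\,x^{(\alpha)} = x^{(\alpha')}$. Applying $g := e_{k-1}$ when $k' = k - 1$ (respectively $g := f_k$ when $k' = k + 1$) to $x^{(\alpha')}$ then produces a nonzero scalar multiple $c\, x^{(\gamma)}$ with $\gamma = \alpha' + \varepsilon_{k'} - \varepsilon_k$: the conditions $h_{k'}^{\alpha'} \le \ell - 2$, $h_k^{\alpha'} = 0$, $m_k^{\alpha'} \ge 1$ force $c \ne 0$ and $m_k^\gamma = m_k^\alpha - 1$, $m_{k'}^\gamma = m_{k'}^\alpha$, $m_j^\gamma = m_j^\alpha$ for $j \notin \{k, k'\}$.

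The induction then closes: $\gamma$ still satisfies $m_i^\gamma \ge m_i^\beta$ for all $i$ and $\mathrm{Edeg}\,x^{(\gamma)} = \mathrm{Edeg}\,x^{(\alpha)} - 1$. If $d = 1$, equality of total energies combined with coordinatewise dominance forces $m_i^\gamma = m_i^\beta$ for every $i$, and Proposition 3.5 supplies $v$ with $v.\,x^{(\gamma)} = x^{(\beta)}$; if $d \ge 2$, the inductive hypothesis applied to $(\gamma, \beta)$ furnishes such $v$. Then $u := c^{-1} v g w \in \mathfrak{u}_q(\mathfrak{sl}_n)$ satisfies $u.\,x^{(\alpha)} = x^{(\beta)}$. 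The strict containment $\mathfrak{u}_q(\mathfrak{sl}_n).\,x^{(\beta)} \subsetneq \mathfrak{u}_q(\mathfrak{sl}_n).\,x^{(\alpha)}$ is then immediate: inclusion from $x^{(\beta)} = u.\,x^{(\alpha)}$, strictness from Proposition 3.3, which confines every element of $\mathfrak{u}_q(\mathfrak{sl}_n).\,x^{(\beta)}$ to energy degree at most $\mathrm{Edeg}\,x^{(\beta)} < \mathrm{Edeg}\,x^{(\alpha)}$. The main obstacle is precisely the one-step reduction --- guaranteeing a representative $\alpha'$ whose zero at position $k$ coexists with a neighbor of slack at most $\ell - 2$, which hinges on the pigeonhole bound $r \le (n-1)(\ell-1) - 1$ extracted from $d \ge 1$ and $\ell \ge 3$.
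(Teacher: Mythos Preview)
Your argument is correct and follows essentially the same strategy as the paper's proof: both reduce by induction on $d=\mathrm{Edeg}\,x^{(\alpha)}-\mathrm{Edeg}\,x^{(\beta)}$, both use the bound $\sum_i h_i^\alpha \le (n-1)(\ell-1)-1$ to construct an equivalent representative $\alpha'$ with $h_k^{\alpha'}=0$ and a neighboring residue $\le \ell-2$, and both then apply a single $f_k$ or $e_{k-1}$ to drop $\mathrm{Edeg}_k$ by one before invoking Proposition~3.5. Your derivation of the residue bound (directly from $|\alpha|=|\beta|$) is a bit cleaner than the paper's contradiction argument, and your endpoint/interior case split packages the paper's $j<n$ versus $j=n$ distinction more uniformly, but the substance is identical.
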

\begin{proof} (I) Assume $\mathrm{Edeg}\,
x^{(\alpha)}=\mathrm{Edeg}\,x^{(\beta)}+1$. Then there exists $j\,
(1\leq j\leq n)$ such that
$\mathrm{Edeg}_j\,x^{(\alpha)}=\mathrm{Edeg}_j\,x^{(\beta)}+1$ and
$\mathrm{Edeg}_i\,x^{(\alpha)}=\mathrm{Edeg}_i\,x^{(\beta)}$ for
$i\ne j$.

Write $\alpha=\sum_{i=1}^{n}(m_i\ell+r_i)\varepsilon_i$, where
$m_i=\mathrm{Edeg}_i\,x^{(\alpha)}$ and $0\leq r_i\leq \ell{-}1$,
then $0\leq \sum_{i=1}^{n}r_i\leq n(\ell{-}1)$. Note that
$|\,\alpha\,|=|\,\beta\,|=s=\ell\cdot\mathrm{Edeg}\,x^{(\alpha)}+\sum_{i=1}^{n}r_i$.
By the assumption above, we must have
$\sum_{i=1}^{n}r_i<(n{-}1)(\ell{-}1)$. Otherwise,
$\sum_{i=1}^{n}r_i\geq(n{-}1)(\ell{-}1)$. This implies that
$\mathrm{Edeg}\,x^{(\alpha)}$ is the least among the
$\mathrm{Edeg}\,x^{(\theta)}$'s, for any
$x^{(\theta)}\in\mathcal{A}_q^{(s)}(n, \bold m)$. It contradicts the
given condition $\mathrm{Edeg}\,
x^{(\alpha)}>\mathrm{Edeg}\,x^{(\beta)}$.

\smallskip
(1) When $j<n$: as $\sum_{i=1}^{n}r_i<(n{-}1)(\ell{-}1)$, there
exists $(h_1,\cdots,h_n)\in\mathbb{Z}^n_+$ with $h_j=0$,
$h_{j+1}<\ell{-}1$, $0\leq h_i\leq \ell{-}1$ for $i\ne j, j{+}1$
such that $\sum_{i=1}^nh_i=\sum_{i=1}^{n}r_i$.

Set $\gamma=\sum_{i=1}^n(m_i\ell+h_i)\varepsilon_i$, then
$|\,\gamma\,|=s$, i.e., $x^{(\gamma)}\in \mathcal{A}_q^{(s)}(n,
\bold m)$. Obviously,
$\mathrm{Edeg}_i\,x^{(\alpha)}=\mathrm{Edeg}_i\,x^{(\gamma)}$ for
each $i$.

Again, we have $f_j.\,
x^{(\gamma)}=[\,m_{j+1}\ell{+}h_{j+1}{+}1\,]\,x^{(\gamma-\varepsilon_j+\varepsilon_{j+1})}\,(\neq0)\in
\mathcal{A}_q^{(s)}(n, \bold m)$,
$\mathrm{Edeg}_j(f_j.\,x^{(\gamma)})=m_j{-}1=\mathrm{Edeg}_j\,x^{(\beta)}$
and
$\mathrm{Edeg}_i(f_j.\,x^{(\gamma)})=\mathrm{Edeg}_i\,x^{(\beta)}$,
for $i\ne j$.

Hence, for the pairs $(x^{(\alpha)}, x^{(\gamma)})$ and
$(f_j.\,x^{(\gamma)}, x^{(\beta)})$, by Proposition 3.5, there exist
$u_1, \,u_2\in \mathfrak{u}_q(\mathfrak{sl}_n)$ such that $u_1.\,
x^{(\alpha)}=x^{(\gamma)}$ and
$u_2.\,(f_j.\,x^{(\gamma)})=x^{(\beta)}$. Set $u=u_2f_ju_1$, then
$u.\, x^{(\alpha)}=x^{(\beta)}$.

\smallskip
(2) When $j=n$: as $\sum_{i=1}^{n}r_i<(n{-}1)(\ell{-}1)$, there
exists $(h_1',\cdots,h_n')\in\mathbb{Z}^n_+$ with
$h'_{n-1}<\ell{-}1$, $h_n'=0$, $0\leq h'_i\leq \ell{-}1$ for $i\ne
n{-}1, n$ such that $\sum_{i=1}^nh_i'=\sum_{i=1}^{n}r_i$.

Set $\gamma'=\sum_{i=1}^n(m_i\ell+h_i')\varepsilon_i$, then
$|\,\gamma'\,|=s$, i.e., $x^{(\gamma')}\in \mathcal{A}_q^{(s)}(n,
\bold m)$. Obviously,
$\mathrm{Edeg}_i\,x^{(\alpha)}=\mathrm{Edeg}_i\,x^{(\gamma')}$ for
each $i$.

Again,
$e_{n-1}.\,x^{(\gamma')}=[\,m_{n-1}\ell{+}h'_{n-1}{+}1\,]\,x^{(\gamma'+\varepsilon_{n-1}-\varepsilon_n)}\,(\neq0)\in
\mathcal{A}_q^{(s)}(n, \bold m)$, and
$\mathrm{Edeg}_i(e_{n-1}.\,x^{(\gamma')})=\mathrm{Edeg}_i\,x^{(\beta)}$
for $i<n$, $
\mathrm{Edeg}_n(e_{n-1}.\,x^{(\gamma')})=m_n{-}1=\mathrm{Edeg}_n\,x^{(\beta)}$.

Now for the pairs $(x^{\alpha}, x^{(\gamma')})$ and $(e_{n-1}.\,
x^{(\gamma')}, x^{(\beta)})$, using Proposition 3.5, there exists
$v_1,\, v_2\in \mathfrak{u}_q(\mathfrak{sl}_n)$ such that $v_1.\,
x^{(\alpha)}=x^{(\gamma')}$ and $v_2.\,(e_{n-1}.\,
x^{(\gamma')})=x^{(\beta)}$. Set $u=v_2e_{n-1}v_1$, then $u.\,
x^{(\alpha)}=x^{(\beta)}$.

\smallskip
(II) Use an induction on
$\mathrm{Edeg}\,x^{(\alpha)}-\mathrm{Edeg}\,x^{(\beta)}$. As
$\mathrm{Edeg}\, x^{(\alpha)}>\mathrm{Edeg}\,x^{(\beta)}$, according
to the proof of (I), it is clear that there are $x^{(\gamma_1)}\in
\mathcal{A}_q^{(s)}(n, \bold m)$ with
$\mathrm{Edeg}\,x^{(\alpha)}=\mathrm{Edeg}\,x^{(\gamma'_1)}+1$ and
$\mathrm{Edeg}_i\,x^{(\alpha)}\ge\mathrm{Edeg}_i\,x^{(\gamma_1')}$
for each $i$, and $u_1\in \mathfrak{u}_q(\mathfrak{sl}_n)$ such that
$u_1.\,x^{(\alpha)}=x^{(\gamma'_1)}$. And for the pair
$(x^{(\gamma'_1)}, x^{(\beta)})$, by the inductive hypothesis, there
is $u_2\in\mathfrak{u}_q(\mathfrak{sl}_n)$ such that
$u_2.\,x^{(\gamma'_1)}=x^{(\beta)}$.

This completes the proof.
\end{proof}

\noindent {\it 3.3. Equivalence and ordering on $n$-tuples.} Note
that the set of $n$-tuples of nonnegative integers indexes a basis
of $\mathcal A_q(n)$ via the mapping $\chi: \mathbb
Z_+^n\longrightarrow \mathcal A_q(n)$ such that
$\chi(\alpha)=x^{(\alpha)}$. Set $\mathbb
Z_+^n(s):=\{\alpha\in\mathbb Z_+^n\mid |\,\alpha\,|=s\}$, $\mathbb
Z_+^n(s,\bold m):=\{\alpha\in\mathbb Z_+^n(s)\mid \alpha\le \bold
m\}$. These index bases of $\mathcal A_q^{(s)}(n)$ and $\mathcal
A_q^{(s)}(n,\bold m)$, respectively.

Set $\mathcal E_i(\alpha):=\mathrm{Edeg}_i\,x^{(\alpha)}$ and
$\mathcal E(\alpha):=(\mathcal E_1(\alpha),\cdots,\mathcal
E_n(\alpha))$. Define an equivalence $\thicksim$ on $\mathbb
Z_+^n(s,\bold m)$ or $\mathbb Z_+^n(s)$ as follows:
$\alpha\thicksim\beta$ $\Longleftrightarrow$ $\mathcal
E(\alpha)=\mathcal E(\beta)$, for any $\alpha, \beta\in\mathbb
Z_+^n(s,\bold m)$ or $\mathbb Z_+^n(s)$. So, Proposition 3.4 shows
that $\alpha\not\thicksim\beta\in\mathbb Z_+^n(s,\bold m)$ or
$\mathbb Z_+^n(s)$, then
$x^{(\alpha)}\not\in\mathfrak{u}_q(\mathfrak{sl}_n).\,x^{(\beta)}$
and
$x^{(\beta)}\notin\mathfrak{u}_q(\mathfrak{sl}_n).\,x^{(\alpha)}$.
While, Proposition 3.5 indicates that if
$\alpha\thicksim\beta\in\mathbb Z_+^n(s,\bold m)$ or $\mathbb
Z_+^n(s)$, then
$\mathfrak{u}_q(\mathfrak{sl}_n).\,x^{(\alpha)}=\mathfrak{u}_q(\mathfrak{sl}_n).\,x^{(\beta)}$.

Introduce an ordering $\succeq$ on $\mathbb Z_+^n$ as follows:
$\alpha\succeq\beta$ $\Longleftrightarrow$ $\mathcal E(\alpha)\ge
\mathcal E(\beta)$ $\Longleftrightarrow$ $\mathcal E_i(\alpha)\ge
\mathcal E_i(\beta)$ for each $i$. So, Proposition 3.6 means that if
$\alpha\succeq\beta\in\mathbb Z_+^n(s,\bold m)$ or $\mathbb
Z_+^n(s)$, then
$\mathfrak{u}_q(\mathfrak{sl}_n).\,x^{(\beta)}\subsetneq\mathfrak{u}_q(\mathfrak{sl}_n).\,x^{(\alpha)}$.
Actually, Proposition 3.6 captures an essential feature between the
ordering relations $\succeq$ on the set of $n$-tuples of
energy-degrees $\{\mathcal E(\alpha)\}$ and the including relations
of submodules of $\mathcal{A}_q^{(s)}(n, \bold m)$ or
$\mathcal{A}_q^{(s)}(n)$. This will be useful to analyse their
indecomposability.


\smallskip
\noindent {\it 3.4. Socle of $\mathcal{A}_q^{(s)}(n, \bold m)$.}
Given $0\le s\le N$ $(N=|\,\bold m\,|$), denote by $E(s)_0$ (resp.
$E(s)$) the lowest (resp. highest) energy degree of elements of
$\mathcal{A}_q^{(s)}(n, \bold m)$.

The following observation will be essential to describing the whole
picture of the submodules structure of $\mathcal{A}_q^{(s)}(n, \bold
m)$ in a more explicit manner.
\begin{lemm} Suppose $n\geq3$ and $\mathbf{char}(q)=\ell\ge 3$.
Given $s$ with $0\le s\le N$, where $N=|\,\bold
m\,|=n(m\ell{-}1)\,$.

\smallskip
$(1)$ When $0\le s\le \ell{-}1:$ \ $E(s)_0=0=E(s)$.

\smallskip
$(2)$ When $(\ell{-}1){+}1\le s\le n(\ell{-}1):$ \ $E(s)_0=0$, and
$\,1\,\;{\le}\,\; E(s)\,\;{\le}\,\; E\bigl(n(\ell{-}1)\bigr)$, where
$n=n'\ell{+}r \ (0\le r<\ell)$,
$E\bigl(n(\ell{-}1)\bigr)=n{-}n'{-}1{+}\delta_{n,n'\ell}$. More
precisely,

\smallskip
$E(s)=j{-}j'{-}\sum_{i=1}^{\ell{-}1}\delta_{i,r_j-h}$, for
$s=j(\ell{-}1)+h$ with $0\le h\le\ell{-}1$, $1\le j\le n$, where
$j=j'\ell{+}r_j \ (0\le r_j<\ell)$. Namely,

\smallskip
$j{-}j'{-}1\le E(s)\le j{-}j'$, for $j(\ell{-}1){+}1\le s\le
(j{+}1)(\ell{-}1)$ with $1\le j\le n{-}1$.

\smallskip
$(3)$ When $n(\ell{-}1){+}1\le s\le N{-}\ell:$ \ $E(s)_0=k$, and
$\,k{+}1\le E(s)\le n(m{-}1)$, for $s=k\ell{+}h{+}(n{-}1)(\ell{-}1)$
with $0\le h\le\ell{-}1$ and $1\le k\le n(m{-}1){-}1$. More
precisely,

\smallskip
$k{+}E\bigl((n{-}1)(\ell{-}1)\bigr)\le E(s)\le
k{+}E\bigl(n(\ell{-}1)\bigr)$, for $k\le
n(m{-}1){-}E\bigl(n(\ell{-}1)\bigr);$ and $E(s)=n(m{-}1)$, for
$k>n(m{-}1){-}E\bigl(n(\ell{-}1)\bigr)$, \\
where
$E\bigl((n{-}1)(\ell{-}1)\bigr)=n{-}n'{-}1-\sum_{i=2}^{\ell{-}1}\delta_{i,n-n'\ell}\ge
1$ under the assumption $n\ge 3$.

\smallskip
$(4)$ When $N{-}(\ell{-}1)\le s\le N:$ \ $E(s)_0=n(m{-}1)=E(s)$.
\end{lemm}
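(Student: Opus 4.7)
The plan is to recast the problem as an integer optimization. Writing each $\alpha_i=a_i\ell+b_i$ with $0\le b_i\le\ell{-}1$ and $0\le a_i\le m{-}1$ (the upper bound coming from $\alpha_i\le m\ell{-}1$), one has $\mathrm{Edeg}\,x^{(\alpha)}=\sum_i a_i$, and the constraint $|\,\alpha\,|=s$ becomes $\ell\sum_i a_i+\sum_i b_i=s$ with $0\le\sum b_i\le n(\ell{-}1)$ and $0\le\sum a_i\le n(m{-}1)$. Minimizing $\sum a_i$ amounts to maximizing $\sum b_i$, giving
\[
E(s)_0=\max\bigl(0,\,\lceil(s-n(\ell{-}1))/\ell\rceil\bigr),
\]
while maximizing $\sum a_i$ amounts to minimizing $\sum b_i$, giving
\[
E(s)=\min\bigl(\lfloor s/\ell\rfloor,\,n(m{-}1)\bigr).
\]
Each of the four cases then reduces to substituting the relevant expression for $s$ into these closed forms and checking achievability of the optimizer.

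For Case~(1), $s<\ell$ immediately forces $\sum a_i=0$, so $E(s)_0=E(s)=0$. For Case~(2), the hypothesis $s\le n(\ell{-}1)$ makes the first formula vanish, so $E(s)_0=0$. For the upper bound I substitute $s=j(\ell{-}1)+h$ with $j=j'\ell+r_j$ into $\lfloor s/\ell\rfloor$, obtaining
\[
\lfloor s/\ell\rfloor=(j-j')+\lfloor(h-r_j)/\ell\rfloor.
\]
Since $-(\ell{-}1)\le h-r_j\le\ell{-}1$, the correction equals $-1$ precisely when $1\le r_j-h\le\ell{-}1$ and $0$ otherwise---exactly what $\sum_{i=1}^{\ell-1}\delta_{i,r_j-h}$ detects---and the two-sided bound $j{-}j'{-}1\le E(s)\le j{-}j'$ follows.

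In Case~(3), substituting $s=k\ell+h+(n{-}1)(\ell{-}1)$ yields $s-n(\ell{-}1)=(k{-}1)\ell+(h{+}1)$ with $1\le h{+}1\le\ell$, whose ceiling-quotient by $\ell$ equals $k$; the hypothesis $k\le n(m{-}1){-}1$ makes this achievable, so $E(s)_0=k$. For the upper bound, $\lfloor s/\ell\rfloor=k+\lfloor(h+(n{-}1)(\ell{-}1))/\ell\rfloor$, the second summand being precisely $E((n{-}1)(\ell{-}1)+h)$ from Case~(2), which ranges between $E((n{-}1)(\ell{-}1))$ and $E(n(\ell{-}1))$ as $h$ varies over $\{0,\ldots,\ell{-}1\}$. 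This value is attained when $k\le n(m{-}1){-}E(n(\ell{-}1))$; otherwise the cap $n(m{-}1)$ is reached, producing the second sub-regime. Case~(4) is immediate: writing $\alpha_i=m\ell{-}1{-}d_i$ with $d_i\ge 0$ and $\sum d_i=N{-}s\le\ell{-}1$ forces each $d_i\le\ell{-}1$, hence $\lfloor\alpha_i/\ell\rfloor=m{-}1$ for all $i$, giving $E(s)_0=E(s)=n(m{-}1)$.

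The main obstacle is bookkeeping rather than genuine difficulty. One must carefully reconcile the closed form $\lfloor s/\ell\rfloor$ with the indicator-sum expression in Case~(2) by splitting on whether $h\ge r_j$ or $h<r_j$, track the correct sub-regime of $k$ in Case~(3), and verify the auxiliary inequality $E((n{-}1)(\ell{-}1))\ge 1$ under the hypothesis $n\ge 3$, $\ell\ge 3$, which follows by plugging $j=n{-}1$, $h=0$ into the Case~(2) formula and using the decomposition of $n{-}1$ modulo $\ell$.
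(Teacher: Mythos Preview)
Your proposal is correct and follows essentially the same approach as the paper: both recognize that writing $\alpha_i=a_i\ell+b_i$ turns the problem into the integer optimization of $\sum a_i$ subject to $\ell\sum a_i+\sum b_i=s$ with $0\le\sum b_i\le n(\ell{-}1)$ and $0\le\sum a_i\le n(m{-}1)$, then treat the four ranges of $s$ case by case. The only real difference is presentational: you front-load the closed forms $E(s)_0=\max\bigl(0,\lceil(s-n(\ell{-}1))/\ell\rceil\bigr)$ and $E(s)=\min\bigl(\lfloor s/\ell\rfloor,\,n(m{-}1)\bigr)$ and then specialize, whereas the paper works more constructively, exhibiting explicit $\alpha$'s (for example $\alpha=(k'{+}1,\ldots,k'{+}1,k',\ldots,k')\ell+(h,\ell{-}1,\ldots,\ell{-}1)$ in Case~(3)) to witness the extremal energy degrees. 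Your route is a bit slicker; the paper's is a bit more self-contained about achievability. One small point worth making explicit in your write-up is the independence observation that justifies achievability throughout: since the $a_i$'s and $b_i$'s can be chosen separately (any target $\sum a_i\in[0,n(m{-}1)]$ and $\sum b_i\in[0,n(\ell{-}1)]$ is realizable by $n$-tuples in the respective ranges), the closed-form optima are always attained.
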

\begin{proof}
Given any $\alpha\in \mathbb Z_+^n(s,\bold m)$, denote by
$\gamma(\alpha):=\alpha-\ell\cdot\mathcal
E(\alpha)=(r_1,\cdots,r_n)$ the rest $n$-tuple of $\alpha$ with
respect to its energy-degree $n$-tuple. Clearly,
$\gamma(\alpha)\le\tau$.

Now from the definitions of $E(s)_0$ and $E(s)$, there are at least
$\alpha, \beta\in \mathbb Z_+^n(s,\bold m)$ such that
$E(s)_0=|\,\mathcal E(\alpha)\,|$ and $E(s)=|\,\mathcal
E(\beta)\,|$, as well as $s=|\,\alpha\,|=\ell\cdot
E(s)_0+|\,\gamma(\alpha)\,|$ with
$|\,\gamma(\alpha)\,|=\sum_{i=1}^nr_i$ as largest as possible,
 and
$s=|\,\beta\,|=\ell\cdot E(s)+|\,\gamma(\beta)\,|$ with
$|\,\gamma(\beta)\,|$ as smallest as possible.

\smallskip
Based on the above observation, the conclusion (1) is clear. As for
(4), we note that for any $x^{(\alpha)}\in\mathcal{A}_q^{(s)}(n,
\bold m)$, $\alpha$ is of the form
$\bigl((m{-}1)\ell{+}a_1,\cdots,(m{-}1)\ell{+}a_n\bigr)$ with
$\gamma(\alpha)=(a_1,\cdots,a_n)\le \tau$ such that
$|\,\gamma(\alpha)\,|=(n{-}1)(\ell{-}1)+h$ with $0\le h\le\ell{-}1$,
and $\mathcal E(\alpha)=\bigl(m{-}1,\cdots,m{-}1)$. So,
$E(s)_0=E(s)=n(m{-}1)$.

\smallskip
$(2)$ When $\ell\le s\le n(\ell{-}1)$: it is clear that $E(s)_0=0$,
as even for the extreme case $s=n(\ell{-}1)$, taking $\alpha=\tau$,
we get that $s=|\,\tau\,|$, $\gamma(\tau)=\tau$ and $\mathcal
E(\tau)=\bold 0$, i.e., $E(s)_0=0$.

In order to estimate $E(s)$, now we can assume that
$j(\ell{-}1){+}1\le s\le (j{+}1)(\ell{-}1)$ for $1\le j\le n{-}1$.
Let us consider the general case $s=j(\ell{-}1)+h$ with $0\le
h<\ell$ and $1\le j\le n$. Write $j=j'\ell{+}r_j$ with $0\le
r_j<\ell$. Then rewrite $s=j(\ell{-}1)+h=
\bigl(j'(\ell{-}1)+r_j\bigr)\ell-(r_j-h)$. Clearly, when $h\ge r_j$,
$E(s)=j'(\ell{-}1)+r_j=j-j'$; and when $h<r_j$,
$E(s)=j'(\ell{-}1)+r_j-1=j-j'-1$. Particularly, when $s=j(\ell{-}1)$
with $h=0$, we get $E(s)=j{-}j'{-}1{+}\delta_{j,j'\ell}$. So we
obtain $j{-}j'{-}1\le E(s)\le j{-}j'$, for $j(\ell{-}1){+}1\le s\le
(j{+}1)(\ell{-}1)$ with $1\le j\le n{-}1$.

\smallskip
(3) When $n(\ell{-}1){+}1\le s\le N{-}\ell$: Firstly, we rewrite
$N-\ell=n(m\ell{-}1)-\ell=\bigl(n(m{-}1){-}1\bigr)\ell{+}n(\ell{-}1)$.
So now for the $s$ given above, we can put it into a certain
strictly smaller interval:
$k\ell{+}(n{-}1)(\ell{-}1)=(k{-}1)\ell{+}1{+}n(\ell{-}1)\le s\le
k\ell{+}n(\ell{-}1)$, for some $k$ with $1\le k\le n(m{-}1){-}1$.
Namely, $s=k\ell{+}h{+}(n{-}1)(\ell{-}1)$ with $0\le h\le\ell{-}1$.

Secondly, write $k=k'n{+}r$ $(0\le r<n)$. Note
$n(m{-}1){-}1=(m{-}2)n{+}(n{-}1)$. Taking
$\alpha=(\underbrace{k'{+}1,\cdots,k'{+}1}_{r},k',\cdots,k')\ell+(h,\ell{-}1,\cdots,\ell{-}1)
$, we obtain $|\,\alpha\,|=s$, i.e., $\alpha\in \mathbb
Z_+^n(s,\bold m)$, $\mathcal
E(\alpha)=(k'{+}1,\cdots,k'{+}1,k',\cdots,k')$,
$\gamma(\alpha)=(h,\ell{-}1,\cdots,\ell{-}1)$ with
$|\,\gamma(\alpha)\,|=(n{-}1)(\ell{-}1){+}h$ large enough. So,
$E(s)_0=|\,\mathcal E(\alpha)\,|=k$.

Finally, as for the estimate of $E(s)$, for
$k\ell{+}(n{-}1)(\ell{-}1)\le s\le k\ell{+}n(\ell{-}1)$, in view of
(2), from $n=n'\ell{+}r$, we get that $(n{-}1)'=n'{-}1$ if $r=0$,
and $(n{-}1)'=n'$ if $r>0$. Therefore,
$E((n{-}1)(\ell{-}1))=(n{-}1){-}(n{-}1)'{-}1{+}\delta_{n-1,(n-1)'\ell}=n{-}n'{-}1$
if $r=0$, $1$; and $E((n{-}1)(\ell{-}1))=n{-}n'{-}2$ if $r>1$. So,
for the above $s$, we get
$$
k+\Bigl(n{-}n'{-}1{-}\sum_{i=2}^{\ell{-}1}\delta_{i,n-n'\ell}\Bigr)\le
E(s)\le k+\Bigl(n{-}n'{-}1{+}\delta_{n,n'\ell}\Bigr),
$$
only if $k+(n{-}n'{-}1{+}\delta_{n,n'\ell})\le n(m{-}1)$. Otherwise,
$E(s)=n(m{-}1)$.

This completes the proof.
\end{proof}

\begin{theorem}
Assume that $n\geq3$ and $\mathbf{char}(q)=\ell\ge 3$. Then for the
$\mathfrak{u}_q(\mathfrak{sl}_n)$-modules $\mathcal{A}_q^{(s)}(n,
\bold m)$ with $0\le s\le N$, one has

$(1)$ For any nonzero $y\in\mathcal{A}_q^{(s)}(n, \bold m)$ with
energy degree $\mathrm{Edeg}\,(y)$, assume that the submodule
$\mathfrak V_y=\mathfrak{u}_q(\mathfrak{sl}_n).\,y$ is simple, then
$\mathrm{Edeg}\,(y)=E(s)_0$.

$(2)$  $\mathrm{Soc}\,\mathcal{A}_q^{(s)}(n, \bold m)=\span_k\{\,
x^{(\alpha)}\in \mathcal{A}_q^{(s)}(n, \bold m)\,\big|\, |\,\mathcal
E(\alpha)|=E(s)_0\,\}$.

$(3)$ $\mathcal{A}_q^{(s)}(n, \bold m)=\sum_{\alpha\in\mathbb
Z_+^n(s,\bold m): \,|\mathcal E(\alpha)|=E(s)}\mathfrak V_\alpha$,
where $\mathfrak
V_\alpha=\mathfrak{u}_q(\mathfrak{sl}_n).\,x^{(\alpha)}$.

$(4)$ When $0\le s\le \ell{-}1$, or $N{-}(\ell{-}1)\le s\le N:$
$\mathcal{A}_q^{(s)}(n, \bold m)=\mathfrak V_{\eta}$ is simple,
where $\eta=(s,0,\cdots,0)$ for $0\le s<\ell$, or
$\eta=(m\ell{-}1,\cdots,m\ell{-}1,(m{-}1)\ell{+}h)$ with
$s=|\,\eta\,|=n(m{-}1)\ell{+}(n{-}1)(\ell{-}1){+}h$, $(1\le
h\le\ell{-}1)$, and $x^{(\eta)}$ is the respective highest weight
vector.

$(5)$ When $\ell\le s\le N{-}\ell:$ $\mathcal{A}_q^{(s)}(n, \bold
m)$ is indecomposable. Moreover,

$(\text{\rm i})$ for $(\ell{-}1){+}1\le s\le n(\ell{-}1):$
$\mathrm{Soc}\,\mathcal{A}_q^{(s)}(n, \bold m)=\mathfrak V_{\eta}$
is simple, where $\eta=(\ell{-}1,\cdots,\ell{-}1,h,0,\cdots,0)$ with
$s=|\,\eta\,|=j(\ell{-}1){+}h$, $(1\le h\le\ell{-}1, \,1\le j<n)$,
and $x^{(\eta)}$ is the highest weight vector;

$(\text{\rm ii})$ for $n(\ell{-}1){+}1\le s\le N{-}\ell:$
$\mathrm{Soc}\,\mathcal{A}_q^{(s)}(n, \bold
m)=\bigoplus_{\eta(\b{\kappa})\in \wp}\mathfrak
V_{\eta(\b{\kappa})}$ is non-simple, where
$\eta(\b{\kappa})\in\wp=\{(\kappa_1\ell{+}(\ell{-}1),\cdots,\kappa_{n-1}\ell{+}(\ell{-}1),\kappa_n\ell{+}h)\mid
 \sum\kappa_i=\kappa$, $0\le \kappa_i\le m{-}1\}$ with
 $s=|\,\eta(\b{\kappa})\,|=\kappa\ell{+}h{+}(n{-}1)(\ell{-}1)$, $(1\le \kappa\le n(m{-}1){-}1$, $0\leq h\leq
 \ell{-}1)$, and $x^{(\eta(\b{\kappa}))}$'s are the respective highest weight
 vectors.
\end{theorem}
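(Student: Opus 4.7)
I plan to prove the five parts in the order (4), (2), (1), (3), (5), taking the energy-degree filtration as the central organizing tool. Two structural observations are used throughout: by Proposition 3.3 each $F_k:=\{z\in\mathcal A_q^{(s)}(n,\bold m):\mathrm{Edeg}(z)\le k\}$ is an $\mathfrak u_q(\mathfrak{sl}_n)$-submodule, and each weight space of $\mathcal A_q^{(s)}(n,\bold m)$ is one-dimensional since the $K_i$-eigenvalues (i.e.\ the differences $\alpha_i-\alpha_{i+1}$) together with $|\alpha|=s$ uniquely recover $\alpha$. Consequently every highest-weight vector is a scalar multiple of a single basis element $x^{(\beta)}$, with $\beta$ forced by $e_i.x^{(\beta)}=0$ into the staircase form where $\beta_i\equiv\ell-1\pmod{\ell}$ whenever $\beta_{i+1}>0$.

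Part (4) follows directly from Lemma 3.7(1),(4): in the extreme ranges $E(s)_0=E(s)$, so every basis element shares the same $\mathcal E$-tuple, and Proposition 3.5 collapses $\mathcal A_q^{(s)}(n,\bold m)$ to a single cyclic module $\mathfrak V_\eta$; simplicity then follows by identifying $\mathfrak V_\eta$ with the simple restricted piece $\mathcal A_q^{(s')}(n,\bold 1)$ of Theorem 2.5(2) via the $q$-integer translations in Lemma 2.2. For part (2), the inclusion $\supseteq$ uses the observation that when $|\mathcal E(\alpha)|=E(s)_0$, Propositions 3.3 and 3.5 force $\mathfrak V_\alpha=\mathrm{span}\{x^{(\gamma)}:\gamma\sim\alpha\}$, and this submodule is $\mathfrak u_q$-isomorphic to a simple $\mathcal A_q^{(s')}(n,\bold 1)$. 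The converse $\subseteq$ (which simultaneously gives part (1)) uses the highest-weight structure: a simple submodule $W$ carries a highest-weight vector $c\cdot x^{(\beta)}$, and if $|\mathcal E(\beta)|>E(s)_0$, Proposition 3.6 produces some $\beta'$ with $\mathcal E(\beta')\le\mathcal E(\beta)$ entrywise and strictly smaller total, so $x^{(\beta')}\in\mathfrak V_\beta$ generates a proper nonzero submodule, contradicting simplicity. Part (3) follows from Proposition 3.6 combined with Lemma 3.7: each $x^{(\gamma)}$ is dominated by some $x^{(\alpha)}$ with $|\mathcal E(\alpha)|=E(s)$.

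For part (5), the socle descriptions (i), (ii) follow from (2) by enumerating the $\sim$-classes at energy level $E(s)_0$: in (i) only the class $\mathcal E=\bold 0$ survives, matching the simple restricted piece $\mathcal A_q^{(s)}(n,\bold 1)$ from Theorem 2.5(2); in (ii) the classes are parametrized exactly by $\b\kappa\in\wp$ as stated. The indecomposability in (ii), where the socle is non-simple, is the main obstacle and motivates the \emph{intertwinedly-lifting} method. My plan is to assume $\mathcal A_q^{(s)}(n,\bold m)=M_1\oplus M_2$ and derive a contradiction by constructing, for any two distinct simple socle summands $\mathfrak V_{\eta(\b\kappa)}$ and $\mathfrak V_{\eta(\b{\kappa}')}$, a common ``ancestor'' $x^{(\alpha)}$ of higher energy degree whose $\mathcal E$-tuple dominates both $\mathcal E(\eta(\b\kappa))$ and $\mathcal E(\eta(\b{\kappa}'))$ entrywise; by Proposition 3.6 this places both socle elements inside $\mathfrak V_\alpha$, and decomposing $x^{(\alpha)}=y_1+y_2$ with $y_i\in M_i$ reveals that each $y_i$ must simultaneously feed into both socle summands, violating the direct-sum decomposition. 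The hardest aspect is guaranteeing the existence of such ancestors inside $\mathbb Z_+^n(s,\bold m)$ for every pair $(\b\kappa,\b{\kappa}')$ and then iterating the intertwined lift across all pairs so that any pretended direct summand $M_1$ is forced to exhaust the entire socle; this requires careful use of Lemma 3.7 in every subrange of $s$, and essentially uses the hypotheses $n\ge 3$ and $\ell\ge 3$ to supply enough admissible tuples and independent lowering chains to interlink all socle summands.
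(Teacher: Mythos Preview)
Your overall plan tracks the paper closely for parts (2), (3), and (4), and your energy-degree filtration $F_k$ is exactly the paper's main tool. However, there is a genuine error that undermines your arguments for (1) and (5)(ii).

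\medskip
\textbf{The weight-space claim is false.} At a root of unity the $K_i$-eigenvalue $q^{\alpha_i-\alpha_{i+1}}$ determines $\alpha_i-\alpha_{i+1}$ only modulo the order of $q$, not absolutely. For instance, with $n=3$, $\ell=3$, $m\ge 2$, $s=6$, the basis elements $x^{(2,2,2)}$, $x^{(3,3,0)}$, $x^{(0,3,3)}$, $x^{(3,0,3)}$, $x^{(4,1,1)}$, $x^{(1,4,1)}$, $x^{(1,1,4)}$ all lie in the weight space $K_1=K_2=1$. Worse, your derived claim that every highest-weight vector is a scalar multiple of a single $x^{(\beta)}$ also fails: with $n=3$, $\ell=3$, $m=3$, $s=10$, any combination $a\,x^{(5,5,0)}+b\,x^{(2,8,0)}$ is a highest-weight vector. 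This breaks your argument for (1), and it breaks (5)(ii) where you write $x^{(\alpha)}=y_1+y_2$ and want each $y_i$ to be a scalar multiple of $x^{(\alpha)}$ so that $\mathfrak u_q.\,y_i=\mathfrak V_\alpha$ contains both socle pieces; without one-dimensionality there is no reason $y_1$ generates $\mathfrak V_\alpha$.

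\medskip
\textbf{How the paper avoids this.} For (1) the paper does not pass to a highest-weight vector at all. It takes any $\beta$ of maximal energy in the support of $y$, then uses Proposition~3.6 to produce $u$ with $u.\,x^{(\beta)}\ne 0$ of strictly lower energy; since every monomial $u$ in the $e_i$'s and $f_i$'s acts on each $x^{(\alpha)}$ as a fixed shift $\alpha\mapsto\alpha+v$ (up to scalar), different support terms of $y$ land on distinct basis elements under $u$, so $u.\,y\ne 0$, and one obtains a proper nonzero submodule $\mathfrak V_{u.y}\subsetneq\mathfrak V_y$. For (5)(ii) the paper's ``intertwinedly-lifting'' argument is more explicit than yours: rather than a single common ancestor, for each pair $\b\kappa,\,\b\kappa'=\b\kappa-\varepsilon_i+\varepsilon_j$ adjacent in the totally ordered set $\mathcal K(\kappa)$ it constructs two \emph{equivalent} tuples $\theta_i\sim\vartheta_i$ at energy level $\kappa+1$ (so $\mathfrak V_{\theta_i}=\mathfrak V_{\vartheta_i}$ by Proposition~3.5) and exhibits specific root vectors $f_{\alpha_{ij}}$, $e_{\alpha_{ij}}$ sending $x^{(\theta_i)}$ to $x^{(\eta(\b\kappa))}$ and $x^{(\vartheta_i)}$ to $x^{(\eta(\b\kappa'))}$. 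Because $\mathfrak V_{\theta_i}$ is cyclic and its image in $F_{\kappa+1}/F_\kappa$ is a single simple summand, $\mathfrak V_{\theta_i}$ has a unique maximal submodule and is therefore indecomposable; this is what forces two linked socle summands into the same direct summand, and connectivity of $\mathcal K(\kappa)$ under the moves $\varepsilon_i-\varepsilon_j$ finishes the argument. Your version omits both the construction of the pair $\theta_i\sim\vartheta_i$ and the reason the lift is indecomposable, and the substitute you propose rests on the incorrect weight-space claim.
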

\begin{proof} (1) If $\mathrm{Edeg}\,(y)>E(s)_0$, then by Definition
3.2, in the expression of $y=\sum_{\alpha}k_\alpha x^{(\alpha)}$,
there exists some $\beta\in\mathbb Z_+^n(s,\bold m)$, $k_\beta\ne 0$
such that $|\,\mathcal E(\beta)\,|=\mathrm{Edeg}\,(y)$. By
Proposition 3.6, we can find $u\in\mathfrak{u}_q(\mathfrak{sl}_n)$
such that $u.\,x^{(\beta)}\ne0$ (then $u.\, y\neq 0$) but
$\mathrm{Edeg}\,(u.\,y)=\mathrm{Edeg}\,(u.\,x^{(\beta)})<\mathrm{Edeg}\,(y)$,
so we get a proper submodule $(0\ne)\, \mathfrak V_{u.\,y}\subsetneq
\mathfrak V_y$. It is a contradiction. So the assertion is true.

\smallskip
(2) follows from the conclusion (1), together with Propositions
3.4--3.6. Since for those $\alpha, \beta\in \mathbb Z_+^n(s, \bold
m)$ with $|\,\mathcal E(\alpha)\,|=|\,\mathcal E(\beta)\,|=E(s)_0$,
if $\alpha\sim\beta$, then $\mathfrak V_{x^{(\alpha)}}=\mathfrak
V_{x^{(\beta)}}$, by Proposition 3.5; and if $\alpha\nsim\beta$,
then by Propositions 3.4 \& 3.6, $\mathfrak
V_{x^{(\alpha)}}\cap\mathfrak V_{x^{(\beta)}}=0$.

\smallskip
(3) For any $\alpha\in\mathbb Z_+^n(s, \bold m)$ with
$|\,\alpha\,|=s$, according to the pre-ordering $\succeq$ defined in
subsection 3.3, we assert that there exists a $\varpi\in \mathbb
Z_+^n(s, \bold m)$ with $|\,\mathcal E(\varpi)\,|=E(s)$, such that
$\varpi\succeq\alpha$. Actually, this fact follows from the proof of
Lemma 3.7. Since $s=\ell\cdot|\,\mathcal
E(\alpha)\,|+|\,\gamma(\alpha)\,|=\ell E(s)+\hbar$ ($0\le \hbar\le
n(\ell{-}1)\,$), if $|\,\mathcal E(\alpha)\,|<E(s)$, writing
$|\,\gamma(\alpha)\,|=\kappa\ell+r$ $(0\le r<\ell)$, then
$\hbar=h\ell+r$ and $\kappa\ge \kappa-h= E(s)-|\,\mathcal E(\alpha)\,|$.
Construct $\mathcal E(\varpi)=(\mathcal
E_1(\alpha)+\jmath_1,\cdots,\mathcal E_n(\alpha)+\jmath_n)$ and
$\gamma(\varpi)=(\hbar_1,\cdots,\hbar_n)\le \tau$, such that each
$\mathcal E_i(\alpha)+\jmath_i\le m{-}1$, and
$\sum\jmath_i=\kappa-h$, $\sum\hbar_i=\hbar$. Taking
$\varpi=\ell\mathcal E(\varpi)+\gamma(\varpi)$, we get  $\varpi\le\bold
m$, $|\,\varpi\,|=s$, i.e., $\varpi\in\mathbb Z_+^n(s, \bold m)$ and
$|\,\mathcal E(\varpi)\,|=E(s)$, such that $\varpi\succeq \alpha$.

Again, from Proposition 3.6, together with its proof, there is $u\in
\mathfrak{u}_q(\mathfrak{sl}_n)$ such that
$u.\,x^{(\varpi)}=x^{(\alpha)}$. Hence, we arrive at the result as
stated.

\smallskip
(4) In these two extreme cases, by Lemma 3.7, we have $E(s)_0=E(s)$.
Note that the generating sets of (3) in both cases only contain one
equivalent class with respect to the equivalent relation $\sim$
defined in subsection 3.3. Thus, the above conclusions (2) \& (3)
give us the desired result below:
$$
\mathcal{A}_q^{(s)}(n, \bold m)=\mathrm{Soc}\,\mathcal{A}_q^{(s)}(n,
\bold m)=\mathfrak V_{\eta}
$$
is simple, here $\eta=(s,0,\cdots,0)$ for $0\le s<\ell$, or
$\eta=\bigl(m\ell{-}1,\cdots,m\ell{-}1,(m{-}1)\ell{+}h\bigr)$
 with $s=|\,\eta\,|=n(m{-}1)\ell{+}(n{-}1)(\ell{-}1){+}h$ $(0\le h\,{<}\,\ell)$, $x^{(\eta)}$ is the respective
 highest weight vector (by Theorem 2.5 (2), or
Proposition 3.5).

\smallskip
(5) By Lemma 3.7, (2) \& (3), we have $E(s)_0<E(s)$. Consequently,
(2) \& (3) give rise to the fact that
$\mathrm{Soc}\,\mathcal{A}_q^{(s)}(n, \bold
m)\subsetneq\mathcal{A}_q^{(s)}(n, \bold m)$.

\smallskip
(i) When $\ell\leq s\leq n(\ell{-}1)$: Due to Lemma 3.7, $E(s)_0=0$.
Then those $n$-tuples $\alpha\in \mathbb Z_+^n(s,\bold m)$ with
$|\,\mathcal E(\alpha)\,|=E(s)_0=0$ (namely, $\alpha\le \tau$) are
equivalent to each other with respect to $\sim$, and
$\eta=(\ell{-}1,\cdots,\ell{-}1,h,0,\cdots,0)$ is one of their
representatives, here $|\,\eta\,|=j(\ell{-}1){+}h=s$ ($1\leq
h\le\ell{-}1,\ 1\le j<n)$, i.e., $\eta\in\mathbb Z_+^n(s,\bold m)$.

Hence, $\mathrm{Soc}\,\mathcal{A}_q^{(s)}(n, \bold m)=\mathfrak
V_{\eta}$ is simple, where $x^{(\eta)}$ is the highest weight
vector, by Theorem 2.5 (2). Consequently, $\mathcal{A}_q^{(s)}(n,
\bold m)$ is indecomposable.

\smallskip
(ii) When $n(\ell{-}1){+}1\le s\le n(m\ell{-}1)\ell{-}\ell$: Thanks
to Lemma 3.7, we can set $s=\kappa\ell{+}h{+}(n{-}1)(\ell{-}1)$ with
$1\le \kappa\le n(m{-}1){-}1$ and $0\leq h\leq \ell{-}1$, then
$E(s)_0=\kappa$, and $\kappa{+}1\le E(s)\le n(m{-}1)$ under the
assumption $n>2$ (see Lemma 3.7).

Consider the set of equivalent classes of $n$-tuples $\eta\in\mathbb
Z_+^n(s,\bold m)$ with $s=|\,\eta\,|$ and $|\,\mathcal
E(\eta)\,|=E(s)_0=\kappa$. Denote it by $\wp$. Clearly, those
$\eta\in\wp$ can be constructed as follows: For the given $\kappa$,
set $\b{\kappa}=(\kappa_1,\cdots,\kappa_n)$ ($0\le \kappa_i\le
m{-}1$) with $\sum \kappa_i=\kappa$,
$\gamma=(\ell{-}1,\cdots,\ell{-}1,h)$ with
$|\,\gamma\,|=(n{-}1)(\ell{-}1){+}h$. Now take
$\eta:=\eta(\b{\kappa})=\ell\cdot\b{\kappa}+\gamma$, then $\mathcal
E(\eta(\b{\kappa}))=\b{\kappa}$, $\gamma(\eta(\b{\kappa}))=\gamma$,
as well as $|\,\eta\,|=\kappa\ell{+}h{+}(n{-}1)(\ell{-}1)=s$, i.e.,
$\eta\in\mathbb Z_+^n(s,\bold m)$. So,
$\wp=\{\,\eta(\b{\kappa})\,{=}\,\bigl(\kappa_1\ell{+}(\ell{-}1),\cdots,\kappa_{n-1}\ell{+}(\ell{-}1),\kappa_n\ell{+}h\bigr)\mid
\sum\kappa_i=\kappa$, $0\le \kappa_i\le m{-}1\}$.

According to Proposition 3.5 and the above conclusion (1), we see
that $x^{(\eta(\b{\kappa}))}$ is the highest weight vector of the
simple module $\mathfrak V_{\eta(\b{\kappa})}$. As the $n$-tuples in
$\wp$ are not equivalent with each other with respect to
$\sim$, from the proof of the above conclusion (2), we obtain that
$\mathrm{Soc}\,\mathcal{A}_q^{(s)}(n, \bold
m)=\bigoplus_{\eta(\b{\kappa})\in \wp}\mathfrak
V_{\eta(\b{\kappa})}$ is non-simple.

Now we claim that $\mathcal{A}_q^{(s)}(n, \bold m)$ is
indecomposable.

\smallskip
(I) Denote $\mathcal K(\kappa):=\{\,\bold
0\le\b{\kappa}=(\kappa_1,\cdots,\kappa_n)\le(m{-}1,\cdots,m{-}1)\mid
\sum\kappa_i=\kappa\,\}$. Now let us lexicographically order the
$n$-tuples in $\mathcal K(\kappa)$ as follows.
\begin{equation*}
\begin{split}
\b{\kappa}&\succ\b{\kappa}{-}\varepsilon_{n-1}{+}\varepsilon_n\\
&\succ\cdots\\
&\succ\b{\kappa}{-}\varepsilon_j{+}\varepsilon_{j+1}
\succ\b{\kappa}{-}\varepsilon_j{+}\varepsilon_{j+2}\succ
\cdots\succ\b{\kappa}{-}\varepsilon_j{+}\varepsilon_n\\
&\succ\cdots\\
&\succ\b{\kappa}{-}\varepsilon_i{+}\varepsilon_{i+1}\succ
\b{\kappa}{-}\varepsilon_i{+}\varepsilon_{i+2}
\succ\cdots\succ\b{\kappa}{-}\varepsilon_i{+}\varepsilon_n\quad(\,\text{\it $i$-th line appears if $\kappa_i>0$}\,)\\
&\succ\cdots\\
&\succ\b{\kappa}{-}\varepsilon_1{+}\varepsilon_2
\succ\b{\kappa}{-}\varepsilon_1{+}\varepsilon_3\succ\cdots\succ\b{\kappa}{-}\varepsilon_1{+}\varepsilon_n\succ
\cdots.
\end{split}
\end{equation*}
So $(\mathcal K(\kappa), \succ)$ is a totaly ordered set. Actually,
the lexicographic order $\succ$ on each line exactly coincides with
the pre-order $\succcurlyeq$ given by the type-$A$ weight system
(relative to its prime root system
$\{\varepsilon_i{-}\varepsilon_{i+1}\mid 1\le i<n\}$\,), i.e.,
$\b{\kappa}{+}\varepsilon_i{-}\varepsilon_{i+1}\succ\b{\kappa}\,,\Longrightarrow
\b{\kappa}{+}\varepsilon_i{-}\varepsilon_{i+1}\succcurlyeq\b{\kappa}\,$.
The latter pre-order will be used in dealing with the
$\mathfrak{u}_q(\mathfrak{sl}_n)$-action below. Now we suppose that
$(\mathcal K(\kappa{+}i), \succ)$ is totaly ordered for each $0\le
i\le E(s)-\kappa$.

(II) For any two successive $n$-tuples
$\bigl(\b{\kappa},\,\b{\kappa}'\bigr)$ in $\mathcal K(\kappa)$,
$\b{\kappa}\succ\b{\kappa}'$, either (i): $\b{\kappa}, \b{\kappa}'$
lies in the same line of some $\b{\kappa}''\in\mathcal K(\kappa)$,
as shown in the Figure above, then there exist $i<j\,(<n)$, such
that $\kappa_i''>0$,
$\b{\kappa}=\b{\kappa}''{-}\varepsilon_i{+}\varepsilon_j$ and
$\b{\kappa}'=\b{\kappa}''{-}\varepsilon_i{+}\varepsilon_{j+1}$, that
is,
$\b{\kappa}=\b{\kappa}'{+}\varepsilon_j{-}\varepsilon_{j+1}\succcurlyeq\b{\kappa}'$;
or (ii): $\b{\kappa}$ lies in the end of the $j$-th line of some
$\b{\kappa}''\in\mathcal K(\kappa)$, i.e., $\kappa_j''>0$,
$\b{\kappa}=\b{\kappa}''-\varepsilon_j{+}\varepsilon_n$, and
$\b{\kappa}'$ lies in the ahead of the $i$-th line with
$\kappa_t''=0$ for $i<t<j$ and $\kappa''_i>0$, i.e.,
$\b{\kappa}'=\b{\kappa}''-\varepsilon_i{+}\varepsilon_{i+1}$. Even
for the latter, $\b{\kappa}\not\succcurlyeq\b{\kappa}'$, but we have
$\b{\kappa}''=\b{\kappa}{+}\varepsilon_j{-}\varepsilon_n\succcurlyeq\b{\kappa}$
and
$\b{\kappa}''=\b{\kappa}'{+}\varepsilon_i{-}\varepsilon_{i+1}\succcurlyeq\b{\kappa}'$.

Now both cases reduce to treat the general case:
$\bigl(\b{\kappa}{+}\varepsilon_i{-}\varepsilon_j,\,
\b{\kappa}\bigr)$, with the pre-order
$\b{\kappa}{+}\varepsilon_i{-}\varepsilon_j\succcurlyeq\b{\kappa}$,
 where
$\b{\kappa}{+}\varepsilon_i{-}\varepsilon_j=(\kappa_1,\cdots,\kappa_i{+}1,\cdots,\kappa_j{-}1,\cdots,\kappa_n)$,
and $j \, ({>}\,i)$ is the first index such that $\kappa_j\ne0$. So,
there exists a $\b{\kappa}{+}\varepsilon_i\in\mathcal
K(\kappa{+}1)$, such that
$\b{\kappa}{+}\varepsilon_i\succeq\b{\kappa}{+}\varepsilon_i{-}\varepsilon_j$
and $\b{\kappa}{+}\varepsilon_i\succeq\b{\kappa}$ (Note the
pre-order $\succeq$ here defined as before in subsection 3.3).

To $\b{\kappa}{+}\varepsilon_i$, we can associate two equivalent
$n$-tuples: $\theta_i\sim\vartheta_i\in\mathbb Z_+^n(s, \bold m)$,
where
\begin{equation*}
\begin{split}
\theta_i&
=\bigl(\kappa_1\ell{+}(\ell{-}1),\cdots,(\kappa_i{+}1)\ell,\cdots,\kappa_j\ell{+}(\ell{-}2),\cdots,\kappa_n\ell{+}h\bigr),\\
\vartheta_i&=\bigl(\kappa_1\ell{+}(\ell{-}1),\cdots,(\kappa_i{+}1)\ell{+}(\ell{-}2),\cdots,
\kappa_j\ell,\cdots,\kappa_n\ell{+}h\bigr),
\end{split}
\end{equation*}
with $\mathcal E(\theta_i)=\mathcal
E(\vartheta_i)=\b{\kappa}{+}\varepsilon_i$. According to the
formulae (1) \& (2) in (\cite{HU}, 4.5) and Proposition 4.6 of
\cite{HU}, there are quantum root vectors $f_{\alpha_{ij}},
\,e_{\alpha_{ij}}\in\mathfrak{u}_q(\mathfrak{sl}_n)$ associated to
positive root $\alpha_{ij}=\varepsilon_i{-}\varepsilon_j$, such that
$ f_{\alpha_{ij}}.\,x^{(\theta_i)}=c_1x^{(\eta(\b{\kappa}))}$, and
$e_{\alpha_{ij}}.\,x^{(\vartheta_i)}=c_2x^{(\eta(\b{\kappa}{+}\varepsilon_i{-}\varepsilon_j))}$,
$(c_1,\,c_2\in k^*)$. However,
$\mathfrak{u}_q(\mathfrak{sl}_n).\,x^{(\theta_i)}=
\mathfrak{u}_q(\mathfrak{sl}_n).\,x^{(\vartheta_i)}$, that is,
$\mathfrak V_{\eta(\b{\kappa})}\bigoplus\mathfrak
V_{\eta(\b{\kappa}{+}\varepsilon_i{-}\varepsilon_j)}\subsetneq
\mathfrak V_{\theta_i}=\mathfrak V_{\vartheta_i}$.

In summary, for any two successive $n$-tuples $(\b{\kappa},
\b{\kappa}')$ in $\mathcal K(\kappa)$ with
$\b{\kappa}\succ\b{\kappa}'$, either $\mathfrak
V_{\eta(\b{\kappa})}\bigoplus\mathfrak V_{\eta(\b{\kappa}')}$ for
$\b{\kappa}\succcurlyeq\b{\kappa}'$, or $\mathfrak
V_{\eta(\b{\kappa}'')}\bigoplus\mathfrak
V_{\eta(\b{\kappa}')}\bigoplus\mathfrak V_{\eta(\b{\kappa})}$ for
$\b{\kappa}''\succcurlyeq\b{\kappa}'$ and
$\b{\kappa}''\succcurlyeq\b{\kappa}$, can be embedded into a larger
highest weight submodule  generated by highest weight vector
$x^{(\eta(\b{\kappa}'{+}\varepsilon_j))}$, or the
 sum of two larger highest weight submodules by highest
weight vectors $x^{(\eta(\b{\kappa}{+}\varepsilon_j))}$ and
$x^{(\eta(\b{\kappa}'{+}\varepsilon_i))}$, all lying in a higher
energy degree $\kappa{+}1$. Because $(\mathcal K(\kappa), \succ)$ is
totally ordered, taking over all the two successive $n$-tuples pairs
$(\b{\kappa}_i, \b{\kappa}_{i+1})$, for $i=1,2,\cdots,\#\mathcal
K(\kappa)$, we prove that $\mathrm{Soc}\,\mathcal{A}_q^{(s)}(n,
\bold m)=\bigoplus_{\eta(\b{\kappa})\in \wp}\mathfrak
V_{\eta(\b{\kappa})}$ can be pairwise intertwinedly embedded into
the sum of larger indecomposable highest weight submodules with
generators lying in a higher energy degree $\kappa{+}1$.

(III) Finally, note that each $(\mathcal K(\kappa{+}\imath),\succ)$
is totally ordered, for every $\imath=0,1,\cdots,E(s){-}\kappa$.
Repeating the proof for $\mathcal K(\kappa)$ in (II), we can lift
pairwise intertwinedly the sum of highest weight submodules at each
energy level into the sum of larger highest weight submodules with
highest weight vectors lying in a higher one level, up to the top
energy level $E(s)$, so that $\mathcal{A}_q^{(s)}(n, \bold m)$ is
indecomposable.

We complete the proof.
\end{proof}

\begin{remark} We develop a new ``intertwinedly-lifting"
method to prove the
indecomposability of $\mathcal{A}^{(s)}_q(n, \bold m)$ in the case when its socle submodule is non-simple.
Note that the indecomposability of $\mathcal{A}_q^{(s)}(n, \bold
m)$ when its socle is non-simple depends on
our assumption $n>2$. Its argument is subtle and more interesting. An
intrinsic reason for resulting in the indecomposability in this case
is revealed by the existing difference between $E(s)_0$ and $E(s)$ as
depicted in our result, see Lemma 3.7 (3), occurred only under the above
assumption. Although our module model $\mathcal{A}_q^{(s)}(n, \bold
m)$ is still valid to the analysis of the submodule structures in
the rank $1$ case, namely, for $\mathfrak{u}_q(\mathfrak{sl}_2)$,
there exists an essential difference between our case here
$\mathfrak{u}_q(\mathfrak{sl}_n)$ with $n>2$ and
$\mathfrak{u}_q(\mathfrak{sl}_2)$. While, the indecomposable modules
for the latter has been completely solved in different perspectives
by many authors, like Chari-Premet \cite{CP}, Suter \cite{Su},
Xiao \cite{XJ}, etc. Recently, for the even order of root of
unity case, Semikhatov \cite{S} distinctly analyzed the submodules
structure of the divided-power quantum plane for the Lusztig small quantum
group $\bar{\mathfrak{u}}_q(\mathfrak{sl}_2)$ using a different way.
\end{remark}

\noindent {\it 3.5. Loewy filtration of $\mathcal{A}_q^{(s)}(n, \bold
m)$ and Loewy layers.} As shown in Theorem 3.8 (5), for the given
$s$ with $\ell\le s\le N{-}\ell$, $\mathcal{A}_q^{(s)}(n, \bold m)$
is indecomposable. We will adopt a method of the filtration analysis
to explore the submodule structures for the indecomposable
module $\mathcal{A}_q^{(s)}(n, \bold m)$.

Set
 $\mathcal V_0=\mathrm{Soc}\,\mathcal{A}_q^{(s)}(n,
\bold m)$, and for $i>0$,
$$
\mathcal
V_i=\span_k\left.\Bigl\{\,x^{(\alpha)}\in\mathcal{A}_q^{(s)}(n,
\bold m)\,\right|\,E(s)_0\leq \mathrm{Edeg}\,x^{(\alpha)}\leq
E(s)_0{+}i\, \Bigr\}.
$$
Obviously, $\mathcal V_{i-1}\subseteq \mathcal V_i$, for any $i$.

Denote $\mathcal K_i^{(s)}:=\mathcal
K\bigl(E(s)_0{+}i\bigr)\,{=}\,\{\,\b{\kappa}=(\kappa_1,\cdots,\kappa_n)\mid
|\,\b{\kappa}\,|=E(s)_0{+}i, \ \kappa_i\le m{-}1\}$, for $0\le i\le
E(s){-}E(s)_0$.

Set $\eta_i=(\ell{-}1, \cdots, \ell{-}1, \overset{t_i}{h_i}, 0,
\cdots, 0)$ and $s_i=|\,\eta_i\,|=(t_i{-}1)(\ell{-}1)+h_i$, for
$1\le t_i\le n$ and $0\leq h_i\leq \ell{-}1$. Write
$\eta(\b{\kappa},i):=\ell\,{\cdot}\,\b{\kappa}+\eta_i$, such that
$|\,\eta(\b{\kappa},i)\,|=s$. Set
$\wp_i^{(s)}:=\{\,\eta(\b{\kappa},i)\in\mathbb{Z}^n_+(s,\bold m)\mid
s=\bigl(E(s)_0{+}i\bigr)\ell+s_i\,\}$. Particularly, for
$n(\ell{-}1){+}1\le s\le N{-}\ell$, $\wp_0^{(s)}=\wp$, as defined in
Theorem 3.8. Note that for any $\ell\le s\le N{-}\ell$, one has
$t_i<n$, for $i>0$.

\begin{theorem} Suppose $n\geq3$ and $\mathbf{char}(q)=\ell\ge
3$. For the indecomposable $\mathfrak{u}_q(\mathfrak{sl}_n)$-modules
$\mathcal{A}_q^{(s)}(n, \bold m)$ with $\ell\le s\le N{-}\ell$, one
has

$(1)$ $\mathcal V_i$'s are
$\mathfrak{u}_q(\mathfrak{sl}_n)$-submodules of
$\mathcal{A}_q^{(s)}(n, \bold m)$, and the filtration
$$
0\subset \mathcal V_0\subset \mathcal V_1\subset\cdots\subset
\mathcal V_{E(s)-E(s)_0}=\mathcal{A}_q^{(s)}(n, \bold
m)\leqno(\divideontimes)
$$
is a $\mathrm{Loewy \ filtration}$ of $\mathcal{A}_q^{(s)}(n, \bold
m)$.

$(2)$ $x^{(\eta(\b{\kappa},i))}\in\mathcal{A}_q^{(s)}(n, \bold m)$
are primitive vectors of  $\mathcal V_i$ $($relative to $\mathcal
V_{i-1}$$)$, for all $\b{\kappa}\in \mathcal K_i^{(s)}$, and
$\mathfrak{u}_q(\mathfrak{sl}_n).\,(x^{(\eta(\b{\kappa},i))}+
\mathcal V_{i-1})\cong
\mathfrak{u}_q(\mathfrak{sl}_n).\,x^{(\eta_i)}=\mathfrak
V_{\eta_i}$.  Its $i$-th Loewy layer
\[
\begin{split}
\mathcal V_{i}/{\mathcal V_{i-1}}&=\span_k\{\,x^{(\alpha)}+\mathcal
V_{i-1}\mid\mathrm{Edeg}\,x^{(\alpha)}=E(s)_0+i\,\}\\
&=
\bigoplus_{\eta(\b{\kappa},i)\in\wp_i^{(s)}}\mathfrak{u}_q(\mathfrak{sl}_n).\,(x^{(\eta(\b{\kappa},i))}+
\mathcal V_{i-1})\\
&\cong\bigl(\#\mathcal K_i^{(s)}\bigr)\,\mathfrak
V_{\eta_i} 
\end{split}
\]
is the direct sum of $\#\mathcal K_i^{(s)}$ isomorphic 
copies of simple module $\mathfrak V_{\eta_i}=\mathcal{A}_q^{(s_i)}(n, \bold 1)$.
\end{theorem}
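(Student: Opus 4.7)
The plan is to recycle the socle analysis of Theorem 3.8 and apply it layer by layer to the successive quotients $\mathcal{A}_q^{(s)}(n,\bold m)/\mathcal{V}_{i-1}$. The filtration by energy degree is natural because Proposition 3.3 already guarantees that the action of $\mathfrak{u}_q(\mathfrak{sl}_n)$ cannot raise energy; so each $\mathcal{V}_i$ is automatically a submodule. The first step is to verify this, and simultaneously to observe that on the quotient $\mathcal{A}_q^{(s)}(n,\bold m)/\mathcal{V}_{i-1}$, the residue classes of monomials $x^{(\alpha)}$ with $\mathrm{Edeg}\,x^{(\alpha)}=E(s)_0+i$ play exactly the role that the socle basis played in Theorem 3.8, because everything of strictly lower energy has been killed.

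Next I would identify the primitive vectors. For $\b{\kappa}\in\mathcal{K}_i^{(s)}$, the monomial $x^{(\eta(\b{\kappa},i))}$ has the property that applying any $e_j$ either produces $0$ or drops the $i$-th energy degree $\mathrm{Edeg}_i$ strictly (by the proof of Proposition 3.3, the only way $\mathrm{Edeg}$ can be preserved under $e_j$ is to cross a threshold $\alpha_j\equiv \ell-1\pmod{\ell}$, which the tail $\eta_i=(\ell-1,\dots,\ell-1,h_i,0,\dots,0)$ is designed to prevent). Hence $e_j.\,x^{(\eta(\b{\kappa},i))}\in\mathcal{V}_{i-1}$, proving primitivity modulo $\mathcal{V}_{i-1}$. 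A parallel computation, or an appeal to Proposition 3.5, shows that $\mathfrak{u}_q(\mathfrak{sl}_n).\,x^{(\eta(\b{\kappa},i))}+\mathcal{V}_{i-1}$ is isomorphic to the simple highest weight module $\mathfrak{V}_{\eta_i}\cong\mathcal{A}_q^{(s_i)}(n,\bold 1)$ generated by $x^{(\eta_i)}$ (Theorem 2.5(2)), since the tuple $\eta(\b{\kappa},i)$ has energy-degree profile $\b{\kappa}$ and residual profile $\eta_i$, and $\b{\kappa}$ behaves like a ``frozen'' parameter under the $\mathfrak{u}_q(\mathfrak{sl}_n)$-action when viewed modulo $\mathcal{V}_{i-1}$.

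Then I would assemble the $i$-th layer. Proposition 3.5 identifies $x^{(\alpha)}$ and $x^{(\beta)}$ (modulo $\mathcal{V}_{i-1}$) whenever $\mathcal{E}(\alpha)=\mathcal{E}(\beta)$, so each equivalence class $\b{\kappa}\in\mathcal{K}_i^{(s)}$ contributes exactly one summand $\mathfrak{V}_{\eta_i}$. Proposition 3.4 guarantees independence: for $\b{\kappa}\neq\b{\kappa}'$ in $\mathcal{K}_i^{(s)}$ no $\mathfrak{u}_q(\mathfrak{sl}_n)$-element can transport $x^{(\eta(\b{\kappa},i))}$ to $x^{(\eta(\b{\kappa}',i))}$, and since both generate simple quotient-submodules, their sum is direct. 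Combining gives
\[
\mathcal{V}_i/\mathcal{V}_{i-1}\;=\;\bigoplus_{\b{\kappa}\in\mathcal{K}_i^{(s)}}\mathfrak{u}_q(\mathfrak{sl}_n).\,\bigl(x^{(\eta(\b{\kappa},i))}+\mathcal{V}_{i-1}\bigr)\;\cong\;(\#\mathcal{K}_i^{(s)})\,\mathfrak{V}_{\eta_i}.
\]

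The main obstacle I anticipate is the \emph{Loewy} assertion in (1), i.e., that $\mathcal{V}_i/\mathcal{V}_{i-1}$ is not merely semisimple but coincides with $\mathrm{Soc}\bigl(\mathcal{A}_q^{(s)}(n,\bold m)/\mathcal{V}_{i-1}\bigr)$. The inclusion $\subseteq$ is the layer computation above. For the reverse inclusion one must show that any residue class of an element of energy degree strictly greater than $E(s)_0+i$ is \emph{not} primitive modulo $\mathcal{V}_{i-1}$, which is precisely the content of Theorem 3.8(1) applied to the quotient: one picks a monomial term of maximal energy in a putative primitive element and uses Proposition 3.6 to build a $u\in\mathfrak{u}_q(\mathfrak{sl}_n)$ that strictly lowers its energy degree while remaining outside $\mathcal{V}_{i-1}$, contradicting primitivity. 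Once this replay of the socle argument relative to $\mathcal{V}_{i-1}$ is in place, the filtration $(\divideontimes)$ is the socle series, and the theorem follows.
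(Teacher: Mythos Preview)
Your proposal is correct and closely parallels the paper's argument for the submodule property of $\mathcal V_i$, the primitivity of $x^{(\eta(\b{\kappa},i))}$, the identification of each cyclic summand with $\mathfrak V_{\eta_i}$ via Proposition 3.5 and Theorem 2.5(2), and the direct-sum decomposition of the layer via Proposition 3.4. The one point of divergence is how the word ``Loewy'' is justified. The paper does not argue at this stage that $(\divideontimes)$ is the socle series; instead it invokes the intertwinedly-lifting argument from the proof of Theorem 3.8(5) to conclude that each two-step subquotient $\mathcal V_i/\mathcal V_{i-2}$ is indecomposable, so that the filtration cannot be shortened and is therefore of Loewy length. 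Your route---replaying the argument of Theorem 3.8(1) in the quotient $\mathcal A_q^{(s)}(n,\bold m)/\mathcal V_{i-1}$ to pin down its socle as exactly $\mathcal V_i/\mathcal V_{i-1}$---is precisely what the paper carries out later, in the proof of Theorem 3.12(1), when establishing rigidity. Either approach suffices; yours has the advantage of directly identifying $(\divideontimes)$ with the socle filtration (hence a Loewy filtration by definition), while the paper's keeps the Loewy claim separate from rigidity and leans on the structural result of Theorem 3.8(5) already in hand.
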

\begin{proof} By definition of $E(s)_0$,
 $\mathrm{Edeg}\,(u.\, x^{(\alpha)})\geq E(s)_0$, only if $u.\, x^{(\alpha)}\ne 0$, for any $0\neq u\in
\mathfrak{u}_q(\mathfrak{sl}_n)$, $x^{(\alpha)}\in \mathcal V_i$.
Meanwhile, Proposition 3.3 gives rise to $\mathrm{Edeg}\,(u.\,
x^{(\alpha)})\leq \mathrm{Edeg}\, x^{(\alpha)}\leq E(s)_0+i$. Thus,
Definition 3.2 implies that $\mathcal V_i$ is a
$\mathfrak{u}_q(\mathfrak{sl}_n)$-submodule of
$\mathcal{A}_q^{(s)}(n, \bold m)$. So we get a filtration
$(\divideontimes)$ of submodules of $\mathcal{A}_q^{(s)}(n, \bold
m)$.

On the other hand, if $\mathrm{Edeg}\,x^{(\alpha)}=E(s)_0\,{+}\,i$,
then $x^{(\alpha)}\notin \mathcal V_{i-1}$, by definition, $\mathcal
 V_i/\mathcal V_{i-1}$ is spanned by $\left\{\,x^{(\alpha)}+\mathcal
V_{i-1}\,\big|\,\mathrm{Edeg}\,
x^{(\alpha)}=E(s)_0\,{+}\,i\,\right\}$.

Assert that $x^{(\eta(\b{\kappa},i))}$ is a primitive vector of
$\mathcal V_i$ relative to $\mathcal V_{i-1}$ $(i\geq 1)$. In fact,
\[
e_j.\,x^{(\eta(\b{\kappa},i))}=\left\{\begin{aligned}
&[\,\kappa_j\ell{+}\ell\,]\,x^{(\eta(\b{\kappa},i)+\varepsilon_j-\varepsilon_{j+1})}=0,~~&
j<t_i, \\
&[\,\kappa_j\ell{+}\delta_{j,t_i}h_i{+}1\,]\,x^{(\eta(\b{\kappa},i)+\varepsilon_j-\varepsilon_{j+1})},~~&j\ge
t_i.
\end{aligned}
\right.
\]

(i) When $t_i=n$: since $e_j.\,x^{(\eta(\b{\kappa},i))}=0$ for $1\le
j<n$, $x^{(\eta(\b{\kappa},i))}$ is a maximal weight vector.

(ii) When $t_i<n$: either
$e_j.\,x^{(\eta(\b{\kappa},i))}=0\in\mathcal V_{i-1}$ for $j<t_i$,
or
$e_j.\,x^{(\eta(\b{\kappa},i))}=c\,x^{(\eta(\b{\kappa},i)+\varepsilon_j-\varepsilon_{j+1})}\in\mathcal
V_{i-1}$ for $j\ge t_i$ and $c\in k^*$. So, $x^{(\eta(\b{\kappa},i))}$ is a
primitive vector of $\mathcal V_i$ relative to $\mathcal V_{i-1}$
$(i\geq 1)$.

Set $\overline{\mathcal
V}_{\eta(\b{\kappa},i)}:=\mathfrak{u}_q(\mathfrak{sl}_n).\,(x^{(\eta(\b{\kappa},i))}+\mathcal
V_{i-1})$. By Proposition 3.5 \& Theorem 2.5 (2), we get that
\[
\begin{split}
\overline{\mathcal V}_{\eta(\b{\kappa},i)}&\cong
\mathfrak{u}_q(\mathfrak{sl}_n).\,x^{(\eta(\b{\kappa},i))}/\bigl(\mathfrak{u}_q(\mathfrak{sl}_n).\,x^{(\eta(\b{\kappa},i))}\cap
\mathcal V_{i-1})\\
&\cong \mathfrak{u}_q(\mathfrak{sl}_n).\,x^{(\eta_i)}=\mathfrak
V_{\eta_i}=\mathcal{A}_q^{(s_i)}(n, \bold 1).
\end{split}
\]
So, $\overline{\mathcal V}_{\eta(\b{\kappa},i)}$ is a simple
submodule of ${\mathcal V_i}/{\mathcal V_{i-1}}$.

For any $\b{\kappa}, \ \b{\kappa}'\in\mathcal K_i^{(s)}$ with
$\b{\kappa}\ne\b{\kappa}'$, i.e.,
$\eta(\b{\kappa},i)\nsim\eta(\b{\kappa}',i)$,  by Proposition 3.4,
$\overline{\mathcal V}_{\eta(\b{\kappa},i)}$, $\overline{\mathcal
V}_{\eta(\b{\kappa}',i)}$ are simple submodules of $\mathcal
V_i/\mathcal V_{i-1}$ with $\overline{\mathcal
V}_{\eta(\b{\kappa},i)}\cap\overline{\mathcal
V}_{\eta(\b{\kappa}',i)}=0$, but $\overline{\mathcal
V}_{\eta(\b{\kappa},i)}\cong\overline{\mathcal
V}_{\eta(\b{\kappa}',i)}\cong \mathfrak
V_{\eta_i}=\mathcal{A}_q^{(s_i)}(n, \bold 1)$. As $\wp_i^{(s)}$
parameterizes the generator set of $\mathcal V_i/\mathcal V_{i-1}$,
$\mathcal V_i/\mathcal V_{i-1}=\bigoplus_{\b{\kappa}\in\mathcal
K_i^{(s)}}\overline{\mathcal
V}_{\eta(\b{\kappa},i)}\cong\bigl(\#\mathcal
K_i^{(s)}\bigr)\,\mathfrak V_{\eta_i}$.

As shown in the proof of Theorem 3.8 (5), $\mathcal V_i/\mathcal
V_{i-2}$ is indecomposable for any $i$ \
$(\,1\,{<}\,i\,{\leq}\,E(s)\,{-}\,E(s)_0\,)$. Hence, the filtration
$(\divideontimes)$ is not contractible and  has the shortest length
such that $\mathcal V_i/\mathcal V_{i-1}$ are semisimple, then it is
a Loewy filtration (for definition, see \cite{JEH}).
\end{proof}

As a consequence of Theorem 3.10, we obtain an interesting combinatorial identity
below.
\begin{coro} $(\text{\rm i})$ $\#\mathcal K_i^{(s)}
=\sum_{j=0}^{\lfloor
\frac{E(s)_0{+}i}{m}\rfloor}(-1)^j\binom{n}{j}\binom{n{+}(E(s)_0{+}i){-}jm{-}1}{n{-}1}$.

$(\text{\rm ii})$ $\sum_{i=0}^{\lfloor
\frac{s}{m\ell}\rfloor}(-1)^i\binom{n}{i}\binom{n{+}s{-}im\ell{-}1}{n{-}1}=\sum_{i=0}^{E(s){-}E(s)_0}\Bigl(\sum_{j=0}^{\lfloor
\frac{s_i}{\ell}\rfloor}(-1)^j\binom{n}{j}\binom{n{+}s_i{-}j\ell{-}1}{n{-}1}\Bigr)\times
\\
\times\Bigl(\sum_{j=0}^{\lfloor
\frac{E(s)_0{+}i}{m}\rfloor}(-1)^j\binom{n}{j}\binom{n{+}(E(s)_0{+}i){-}jm{-}1}{n{-}1}\Bigr)$,
where $s=(E(s)_0{+}i)\ell+s_i$.
\end{coro}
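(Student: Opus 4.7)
The plan is to obtain both statements from dimension count: part (i) by a generating-function argument identical in spirit to Proposition 3.1 (2), and part (ii) by equating two expressions for $\dim\mathcal{A}_q^{(s)}(n,\bold m)$ — namely, the one from Proposition 3.1 (2), and the one provided by the Loewy layer decomposition of Theorem 3.10.

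First, for (i), recall that by definition
$$
\mathcal K_i^{(s)}=\bigl\{\,\b{\kappa}=(\kappa_1,\cdots,\kappa_n)\in\mathbb Z_+^n\;\big|\;\textstyle\sum_j\kappa_j=E(s)_0+i,\ 0\le\kappa_j\le m{-}1\,\bigr\}.
$$
Thus $\#\mathcal K_i^{(s)}$ equals the coefficient of $t^{E(s)_0+i}$ in $P_{n,m}(t)=(1+t+\cdots+t^{m-1})^n$. I would then write $P_{n,m}(t)=(1-t^m)^n(1-t)^{-n}$, expand the two factors by the binomial theorem and the negative binomial series, and read off the coefficient. This is precisely the argument used at the end of the proof of Proposition 3.1 (2), so no new difficulty appears.

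Second, for (ii), by Theorem 3.10 (2) applied to $\mathcal{A}_q^{(s)}(n,\bold m)$ with $\ell\le s\le N{-}\ell$, the Loewy layers satisfy
$$
\dim(\mathcal V_i/\mathcal V_{i-1})=\#\mathcal K_i^{(s)}\cdot\dim\mathfrak V_{\eta_i}
=\#\mathcal K_i^{(s)}\cdot\dim\mathcal{A}_q^{(s_i)}(n,\bold 1),
$$
so that
$$
\dim\mathcal{A}_q^{(s)}(n,\bold m)=\sum_{i=0}^{E(s)-E(s)_0}\#\mathcal K_i^{(s)}\cdot\dim\mathcal{A}_q^{(s_i)}(n,\bold 1).
$$
Now I would substitute the formula from part (i) for $\#\mathcal K_i^{(s)}$, and Corollary 2.6 for $\dim\mathcal{A}_q^{(s_i)}(n,\bold 1)$, and equate the result with the expression for $\dim\mathcal{A}_q^{(s)}(n,\bold m)$ given by Proposition 3.1 (2). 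The asserted identity drops out, with $s=(E(s)_0{+}i)\ell+s_i$ encoding exactly the way each basis monomial decomposes into its energy-degree part and its residual $n$-tuple modulo $\ell$.

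The genuinely substantive content has already been carried out in Theorem 3.8 and Theorem 3.10, where the Loewy structure was identified; the present statement is a bookkeeping consequence. The only place requiring a touch of care is to verify that the boundary cases $0\le s\le\ell{-}1$ and $N{-}(\ell{-}1)\le s\le N$ — where Theorem 3.10 does not apply because $\mathcal{A}_q^{(s)}(n,\bold m)$ is already simple by Theorem 3.8 (4) — collapse both sides to the same one-term sum ($i=0$, with $s_0=s$ and $E(s)_0=E(s)$, or symmetrically at the top degree); this is immediate, and no other obstruction arises.
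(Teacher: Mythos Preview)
Your proposal is correct and follows essentially the same route as the paper: for (i) the paper also identifies $\#\mathcal K_i^{(s)}$ as the coefficient of $t^{E(s)_0+i}$ in $P_{n,m}(t)$ and refers back to the generating-function argument of Corollary 2.6, and for (ii) the paper likewise equates $\dim\mathcal{A}_q^{(s)}(n,\bold m)$ from Proposition 3.1 with the sum $\sum_i\#\mathcal K_i^{(s)}\cdot\dim\mathcal{A}_q^{(s_i)}(n,\bold 1)$ obtained from the Loewy-layer decomposition of Theorem 3.10. Your explicit check of the boundary cases $0\le s\le\ell{-}1$ and $N{-}(\ell{-}1)\le s\le N$ is a useful addition that the paper leaves implicit.
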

\begin{proof} (i) From the definition of $\mathcal K_i^{(s)}$, $\#\mathcal K_i^{(s)}$
is equal to the coefficient of $t^{E(s)_0+i}$ of polynomial $P_{n,
m}(t)=(1+t+t^2+\cdots+t^{m-1})^n$. So, it is true, similar to
Corollary 2.6.

(ii) follows from (i), Proposition 3.1 \& Corollary 2.6, as well as
\[
\mathcal{A}_q^{(s)}(n, \bold
m)\cong\bigoplus_{i=0}^{E(s){-}E(s)_0}\mathcal V_i/\mathcal
V_{i-1}\cong\bigoplus_{i=0}^{E(s){-}E(s)_0}\bigl(\#\mathcal
K_i^{(s)}\bigr)\mathcal{A}_q^{(s_i)}(n, \bold 1),\leqno(\circledast)
\]
as vector spaces.
\end{proof}


Now we give an example to show the structural variations of
$\mathcal{A}_q^{(s)}(n, \bold m)$ by increasing the degree $s$. For
$n=3, m=2$ and $\ell=3$, in the following picture, each point
represents one simple submodule of a Loewy layer, and each arrow
represents the linked relationships existed among the simple
subquotients. For example, $a\rightarrow b$ means that there exists
$u\in \mathfrak{u}_q(\mathfrak {sl}_3)$ such that $u.\, a=b$.
\\
\\
\\
\\
\\
\\
\\
\\
\\
\\
\\
\\
\\
\\
\\
\\
\\
\\
\scalebox{.6}[.5]{\includegraphics[0, -6][10, 0]{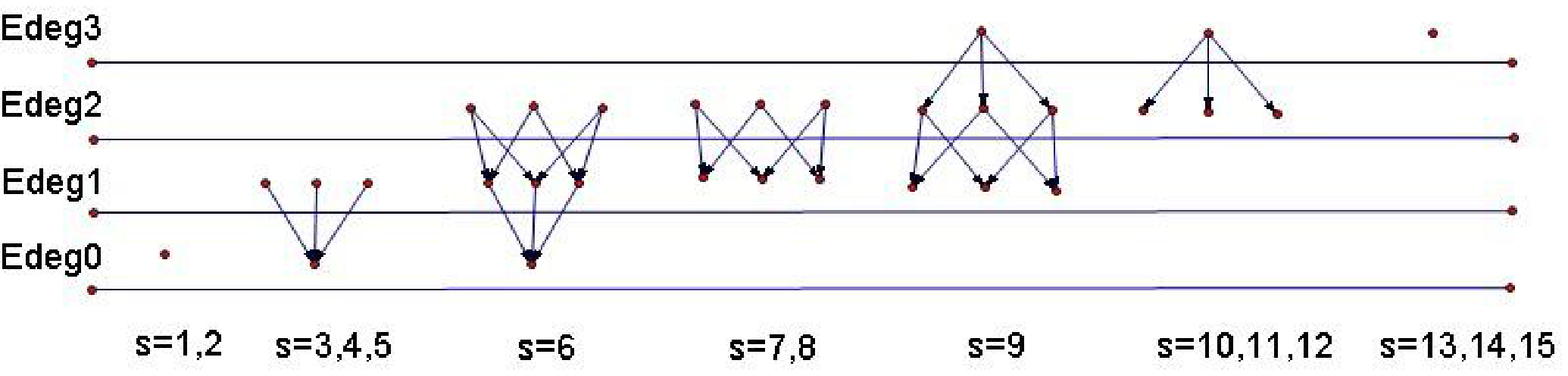}}

\noindent {\it 3.6. Rigidity of $\mathcal{A}_q^{(s)}(n, \bold m)$.}
As we known, both the radical filtration and the socle filtration of
a module $M$ are the Loewy filtrations, and $\text{\rm
Rad}^{r-k}M\subseteq \text{\rm Soc}^kM$, where $r=\ell\ell\,M$ is
the Loewy length of $M$. In this subsection, we will prove the
coincidence of both filtrations for $\mathcal{A}_q^{(s)}(n, \bold
m)$, that is the following result.
\begin{theorem}
Suppose $n\geq3$ and $\mathbf{char}(q)=\ell\ge 3$. Then
$\mathcal{A}_q^{(s)}(n, \bold m)$ is a $\footnote{The definitions of
rigid module, socle filtration, radical filtration can be found in
(\cite{JEH}, 8.14), or some relevant elegant investigations on rigidity of a module and Loewy filtration in \cite{I1, I2}.\ } \mathrm{rigid}$
$\mathfrak{u}_q(\mathfrak{sl}_n)$-module, and $\ell\ell\,\mathcal{A}^{(s)}_q(n, \bold m)=E(s){-}E(s)_0{+}1$.
\end{theorem}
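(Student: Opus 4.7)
The plan is to identify the Loewy filtration $(\divideontimes)$ of Theorem 3.10 simultaneously with the socle filtration and with the radical filtration of $M:=\mathcal{A}_q^{(s)}(n,\bold m)$; rigidity then follows by definition. The length assertion $\ell\ell\,M = E(s){-}E(s)_0{+}1$ is immediate, since all Loewy filtrations of a finite-length module share a common length.

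First I would show $\mathcal V_i = \mathrm{Soc}^{i+1}(M)$. The inclusion $\mathcal V_i/\mathcal V_{i-1} \subseteq \mathrm{Soc}(M/\mathcal V_{i-1})$ is Theorem 3.10 (2), since each layer is semisimple. For the reverse, suppose $y \in M$ satisfies $y+\mathcal V_{i-1} \in \mathrm{Soc}(M/\mathcal V_{i-1})$ but $y \notin \mathcal V_i$. Partition the expansion $y = \sum k_\alpha x^{(\alpha)}$ into $\sim$-equivalence classes on $\mathbb Z_+^n(s,\bold m)$; by Proposition 3.4 these classes behave independently under the $\mathfrak{u}_q(\mathfrak{sl}_n)$-action, and some class contains an $\alpha$ with $\mathrm{Edeg}\,x^{(\alpha)} \ge E(s)_0{+}i{+}1$. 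Proposition 3.6 then furnishes $u \in \mathfrak{u}_q(\mathfrak{sl}_n)$ producing $0 \ne u.y \in \mathcal V_i \setminus \mathcal V_{i-1}$; the submodule $\mathfrak{u}_q(\mathfrak{sl}_n).(u.y+\mathcal V_{i-1})$ is a nonzero proper submodule of $\mathfrak{u}_q(\mathfrak{sl}_n).(y+\mathcal V_{i-1})$, contradicting simplicity.

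Next I would prove $\mathrm{Rad}^k(M) = \mathcal V_{r-1-k}$ ($r=E(s){-}E(s)_0{+}1$) by induction on $r$. The base case $r=1$ is trivial; the base case $r=2$ uses the lemma that any indecomposable module $N$ of Loewy length $2$ satisfies $\mathrm{Rad}(N)=\mathrm{Soc}(N)$, because otherwise a semisimple complement of $\mathrm{Rad}(N)$ inside $\mathrm{Soc}(N)$ would lift through the semisimple top to produce a direct summand of $N$. For the inductive step $r\ge 3$, the submodule $\mathcal V_{r-2}$ has Loewy length $r-1$ with $\mathcal V_{-1}\subset \mathcal V_0\subset\cdots\subset \mathcal V_{r-2}$ being its socle filtration (simple submodules of $\mathcal V_{r-2}$ lie in $\mathrm{Soc}(M)=\mathcal V_0$, and the argument iterates), while the indecomposability of $\mathcal V_i/\mathcal V_{i-2}$ from Theorem 3.10 is inherited. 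By induction $\mathcal V_{r-2}$ is rigid with $\mathrm{Rad}(\mathcal V_{r-2})=\mathcal V_{r-3}$. The general inclusion $\mathrm{Rad}(\mathcal V_{r-2})=J\mathcal V_{r-2}\subseteq JM=\mathrm{Rad}(M)$ (with $J=J(\mathfrak{u}_q(\mathfrak{sl}_n))$) yields $\mathcal V_{r-3}\subseteq \mathrm{Rad}(M)$. Applying the length-$2$ lemma to the indecomposable quotient $M/\mathcal V_{r-3}$ of Loewy length $2$ gives $\mathrm{Rad}(M/\mathcal V_{r-3})=\mathcal V_{r-2}/\mathcal V_{r-3}$, and the quotient formula $\mathrm{Rad}(M/K)=\mathrm{Rad}(M)/K$ (valid since $\mathcal V_{r-3}\subseteq \mathrm{Rad}(M)$) yields $\mathrm{Rad}(M)=\mathcal V_{r-2}$. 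Iterating via $\mathrm{Rad}^k(M)=\mathrm{Rad}^{k-1}(\mathcal V_{r-2})$ and invoking the inductive rigidity of $\mathcal V_{r-2}$ closes the induction.

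The main obstacle is orchestrating this induction carefully: one must verify that the induced filtration on each $\mathcal V_j$ remains its socle filtration (so the structural hypotheses persist in the submodule), and one must bridge the local length-$2$ computations to the global radical chain. The first point is handled by the energy-degree analysis of Propositions 3.3--3.6 (a simple submodule of $\mathcal V_j$ is a simple submodule of $M$, hence lies in the socle $\mathcal V_0$). The second relies on two essential ingredients: the intrinsic indecomposability of $\mathcal V_i/\mathcal V_{i-2}$ coming from the ``intertwinedly-lifting'' proof of Theorem 3.8(5), which holds irrespective of the ambient module; and the general fact $JN\subseteq JM$ for $N\subseteq M$, which lets local rigidity of $\mathcal V_{r-2}$ transfer into the inclusion $\mathcal V_{r-3}\subseteq \mathrm{Rad}(M)$ needed to apply the quotient formula. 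This interlocking between the indecomposability of the length-$2$ quotients and the Jacobson-radical containment is the delicate core of the argument.
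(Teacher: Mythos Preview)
Your socle-filtration step mirrors the paper's essentially verbatim (both argue that a simple subquotient of $M/\mathcal V_{i-1}$ must have energy degree exactly $E(s)_0+i$ by producing a proper submodule otherwise). The radical-filtration step, however, is genuinely different. The paper argues directly: starting from the general containment $\mathrm{Rad}^i(M)\subseteq\mathrm{Soc}^{r-i}(M)=\mathcal V_{r-1-i}$, it shows equality for $i=1$ by contradiction --- if some $y\in\mathcal V_{r-2}$ escapes $\mathrm{Rad}(M)$, pick a maximal submodule $\mathcal V$ avoiding $y$; then $\mathfrak u_q(\mathfrak{sl}_n).\,y+\mathcal V=M$, but $\mathfrak u_q(\mathfrak{sl}_n).\,y\subseteq\mathcal V_{r-2}$ by the energy-degree bound, forcing all top-energy-degree monomials into $\mathcal V$ and hence $\mathcal V=M$ by Theorem~3.8(3) --- and then iterates. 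Your approach instead abstracts away the energy-degree machinery entirely: you are really proving the general lemma that \emph{any} finite-length module whose socle filtration has every two-step subquotient $N_i/N_{i-2}$ indecomposable is rigid, by induction on Loewy length using the length-$2$ case and the containment $J\mathcal V_{r-2}\subseteq JM$. This is cleaner and more portable, isolating precisely which structural input from Theorem~3.8(5) is doing the work; the cost, which you correctly identify, is that the induction must be run over this broader class of modules (since $\mathcal V_{r-2}$ is not itself an $\mathcal A_q^{(s')}(n,\bold m')$), and one must check the hypotheses persist under passage to $\mathcal V_{r-2}$. The paper's approach stays inside the specific module throughout and re-uses the energy-degree toolkit, which is more hands-on but less transferable.
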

\begin{proof} By the definition of rigid module, it suffices to
prove that the filtration $(\divideontimes)$ in Theorem 3.10 is both
socle and radical.

(1) Note $\mathrm{Soc}^0\mathcal{A}_q^{(s)}(n, \bold m)=0$,
$\mathrm{Soc}^1\mathcal{A}_q^{(s)}(n, \bold m)=\mathcal V_0$, by
Theorem 3.10. Assume that we have proved
$\mathrm{Soc}^i\mathcal{A}_q^{(s)}(n, \bold m)=\mathcal V_{i-1}$,
for $i\ge 1$. We are going to show $\mathcal V_i/\mathcal
V_{i-1}=\mathrm{Soc}\,(\mathcal{A}^{(s)}_q(n, \bold m)/\mathcal
V_{i-1})$, i.e., $\mathrm{Soc}^{i+1}\mathcal{A}_q^{(s)}(n, \bold
m)=\mathcal V_i$.

As $\mathcal V_i/\mathcal
V_{i-1}=\bigoplus_{\eta(\b{\kappa},i)\in\wp_i^{(s)}}\overline{\mathcal
V}_{\eta(\b{\kappa},i)}\bigl(\subset \mathcal{A}^{(s)}_q(n, \bold
m)/\mathcal V_{i-1}\bigr)$ is semisimple, $\mathcal V_i/\mathcal
V_{i-1}\subseteq \mathrm{Soc}\,\bigl(\mathcal{A}^{(s)}_q(n, \bold
m)/\mathcal V_{i-1}\bigr)$. Note that $\mathcal V_i$ is spanned by
$\{\,x^{(\alpha)}\in \mathcal{A}^{(s)}_q(n, \bold
m)\mid\mathrm{Edeg}\,x^{(\alpha)}\,{=}$ $E(s)_0\,{+}\,i\,\}$, for
each $i\ge 1$. Similarly to Theorem 3.8 (1), we assert that for any
nonzero $y{+}\mathcal V_{i-1}\in\mathcal{A}_q^{(s)}(n, \bold
m)/\mathcal V_{i-1}$ with energy degree $\mathrm{Edeg}\,(y)\ge
E(s)_0{+}i$, assume that the submodule $\overline{\mathfrak
V}_y=\mathfrak{u}_q(\mathfrak{sl}_n).\,(y{+}\mathcal V_{i-1})$ is
simple, then $\mathrm{Edeg}\,(y)=E(s)_0\,{+}\,i$, that is,
$y\in\mathcal V_i$. This gives the desired result.

In fact, if $\mathrm{Edeg}\,(y)>E(s)_0\,{+}\,i$, that is,
$\mathrm{Edeg}\,(y)=E(s)_0\,{+}\,j$ with $j>i$, then by Definition
3.2, in the expression of $y=\sum_{\alpha}k_\alpha x^{(\alpha)}$,
there exists some $\beta\in\mathbb Z_+^n(s,\bold m)$, $k_\beta\ne 0$
such that $|\,\mathcal E(\beta)\,|=\mathrm{Edeg}\,(y)$. Write
$\b{\kappa}=\mathcal E(\beta)$. Then there exists
$\eta(\b{\kappa},j)=\ell\,{\cdot}\,\b{\kappa}{+}\eta_j\in \mathbb
Z_+^n(s,\bold m)$ (where
$\eta_j=(\ell{-}1,\cdots,\ell{-}1,\underset{t_j}{h_j},0,\cdots,0)$
with $|\,\b{\kappa}\,|=E(s)_0{+}j\ge j$, (so $\exists\,
\kappa_{i_0}\ne 0)$, such that $\eta(\b{\kappa},j)\sim\beta$, by the
remark in subsection 3.3. Since $j>i\ge 1$, by the note previous to
Theorem 3.10, $t_j<n{-}1$. So, there is $\bar \eta_j=(\bar
h_1,\cdots,\bar h_n)$ with $\bar h_{i_0}=0$, $\bar
h_{i_0+1}<\ell{-}1$ and $\bar h_k\le \ell{-}1$ and
$|\,\bar\eta_j\,|=s_j=|\,\eta_j\,|$, such that $\bar \eta_j\sim
\eta_j$, and $\bar
\eta(\b{\kappa},j)=\ell\,{\cdot}\,\b{\kappa}{+}\bar\eta_j\sim\eta(\b{\kappa},j)\sim\beta$.
By Proposition 3.5, we can find
$u\in\mathfrak{u}_q(\mathfrak{sl}_n)$ such that
$u.\,x^{(\beta)}=x^{(\bar \eta(\b{\kappa},j))}$. Clearly, for
$x^{(\bar \eta(\b{\kappa},j))}$, there exists an $f_{i_0}\in
\mathfrak{u}_q(\mathfrak{sl}_n)$, such that $f_{i_0}.\,x^{(\bar
\eta(\b{\kappa},j))}\ne 0$ (then $(f_{i_0}u).\, y\neq 0$) but
$\mathrm{Edeg}\,(f_{i_0}.\,x^{(\bar
\eta(\b{\kappa},j))})=\mathrm{Edeg}\,x^{(\bar
\eta(\b{\kappa},j))}{-}1$, so
$\mathrm{Edeg}\,(f_{i_0}u.\,y)=\mathrm{Edeg}\,(f_{i_0}u.\,x^{(\beta)})=\mathrm{Edeg}\,(u.\,x^{(\beta)}){-}1
=\mathrm{Edeg}\,(x^{(\beta)}){-}1<\mathrm{Edeg}\,(y)$. Thereby, we
get a proper submodule $(0\ne)\, \overline{\mathfrak
V}_{f_{i_0}u.\,y}\subsetneq\overline{\mathfrak V}_{u.\,y}=
\overline{\mathfrak V}_y$, by Proposition 3.6. It is a
contradiction. So the above assertion is true.


(2) By Theorem 3.10, $(\divideontimes)$ is a Loewy filtration of
$\mathcal{A}^{(s)}_q(n, \bold m)$, so its Loewy length
$r=\ell\ell\,\mathcal{A}^{(s)}_q(n, \bold m)=E(s){-}E(s)_0{+}1$.
Then for $0\le i\le E(s){-}E(s)_0$, we have
$$
\mathrm{Rad}^i(\mathcal{A}^{(s)}_q(n, \bold
m))\subseteq\mathrm{Soc}^{r-i}(\mathcal{A}^{(s)}_q(n, \bold m))
=\mathcal V_{E(s)-E(s)_0-i}.
$$

For $i=1$: if there exists a $(0\ne\,)\,y\in \mathcal
V_{E(s)-E(s)_0-1}$, and
$y\notin\mathrm{Rad}^1(\mathcal{A}^{(s)}_q(n, \bold m))$, then by
definition, there is a maximal proper submodule $\mathcal V\subset
\mathcal{A}^{(s)}_q(n, \bold m)$ such that $y\notin\mathcal V$.
Since $\mathcal V$ is maximal,
$\mathfrak{u}_q(\mathfrak{sl}_n).\,y+\mathcal
V=\mathcal{A}^{(s)}_q(n, \bold
m)=\sum_{|\,\alpha\,|=E(s)}\mathfrak{u}_q(\mathfrak{sl}_n).\,x^{(\alpha)}$,
by Theorem 3.8 (3). However, $\mathrm{Edeg}\,u.\,y\leq
\mathrm{Edeg}\,y=E(s){-}1$, so we derive that $\{\,x^{(\alpha)}\in
\mathcal{A}^{(s)}_q(n, \bold
m)\,\left|\,\mathrm{Edeg}\,x^{(\alpha)}=E(s)\right.\,\}\subseteq
\mathcal V$. Therefore, $\mathcal V=\mathcal{A}^{(s)}_q(n, \bold
m)$, it is contrary to the above assumption. This means
$\mathrm{Rad}^1(\mathcal{A}^{(s)}_q(n, \bold m))=\mathcal
V_{E(s)-E(s)_0-1}$.

Assume we have proved that $\mathrm{Rad}^i(\mathcal{A}^{(s)}_q(n,
\bold m))=\mathcal V_{E(s)-E(s)_0-i}$,  for $i\ge 1$. Note that
$\mathrm{Rad}^{i+1}(\mathcal{A}^{(s)}_q(n, \bold
m))\subseteq\mathcal V_{E(s)-E(s)_0-i-1}\subset \mathcal
V_{E(s)-E(s)_0-i}=\mathrm{Rad}^i(\mathcal{A}^{(s)}_q(n, \bold m))$.
By definition, $\mathrm{Rad}^{i+1}(\mathcal{A}^{(s)}_q(n, \bold m))$
is the intersection of all maximal submodule of
$\mathrm{Rad}^i(\mathcal{A}^{(s)}_q(n, \bold m))$. According to
Theorem 3.10 (2), we have that $\mathcal V_{E(s){-}E(s)_0{-}i}$ is
spanned by $\{\, x^{(\alpha)}\in\mathcal{A}_q^{(s)}(n, \bold
m)\,\left|\, \mathrm{Edeg}\,x^{(\alpha)}=E(s){-}i\right.\}$. Using
the similar argument for $i=1$, we can derive
$\mathrm{Rad}^{i+1}(\mathcal{A}^{(s)}_q(n, \bold m))=\mathcal
V_{E(s)-E(s)_0-i-1}$.

Consequently, the filtration $(\divideontimes)$ is a radical
filtration.
\end{proof}

Denote by $\mathcal{A}_q^{(s)}(n)$ the $s$-th homogenous space of
$\mathcal{A}_q(n)$.

\begin{coro} Suppose that $n\geq3$ and $\mathbf{char}(q)=\ell\ge
3$. Then $\mathfrak{u}_q(\mathfrak{sl}_n)$-submodules
$\mathcal{A}_q^{(s)}(n)$ of $\mathcal{A}_q(n)$ are indecomposable
and rigid.
\end{coro}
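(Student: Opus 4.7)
The plan is to reduce the claim for $\mathcal{A}_q^{(s)}(n)$ to the already-established statements for the truncated modules $\mathcal{A}_q^{(s)}(n,\bold m)$ by choosing $\bold m$ sufficiently large that the two modules coincide.

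First, I would record the following key observation: if $m\in\mathbb{N}$ is chosen so that $m\ell-1\ge s$, then every $\alpha\in\mathbb Z_+^n$ with $|\,\alpha\,|=s$ automatically satisfies $\alpha_i\le s\le m\ell-1$ for each $i$, so that $\alpha\le\bold m$. Consequently $\{x^{(\alpha)}\mid|\alpha|=s\}\subseteq\mathcal{A}_q^{(s)}(n,\bold m)$, and since the reverse inclusion is obvious, we obtain the equality of $\mathfrak{u}_q(\mathfrak{sl}_n)$-submodules
\[
\mathcal{A}_q^{(s)}(n)\;=\;\mathcal{A}_q^{(s)}(n,\bold m)
\]
for every $m$ with $m\ell>s$. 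This equality is as $k$-vector spaces, but since both are spanned by the same monomials $x^{(\alpha)}$ and the $\mathfrak{u}_q(\mathfrak{sl}_n)$-action of Proposition 2.4 is described in terms of these monomials, it is automatically an equality of $\mathfrak{u}_q(\mathfrak{sl}_n)$-modules.

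Next I would split the cases according to the size of $s$. If $0\le s\le \ell-1$, then taking any $m\ge 1$ we have $\mathcal{A}_q^{(s)}(n)=\mathcal{A}_q^{(s)}(n,\bold m)$, which by Theorem 3.8 (4) is simple, hence indecomposable and trivially rigid (the trivial filtration $0\subset\mathcal{A}_q^{(s)}(n)$ serves simultaneously as socle and radical filtration). For $s\ge\ell$, choose $m$ large enough that both $m\ell-1\ge s$ and $s\le N-\ell=n(m\ell-1)-\ell$ hold simultaneously (both conditions are satisfied as soon as $m$ is taken larger than some explicit bound depending on $s$ and $n$). Then we fall into the range $\ell\le s\le N-\ell$ treated in Theorem 3.8 (5) and Theorem 3.12, so that $\mathcal{A}_q^{(s)}(n,\bold m)$ is indecomposable and rigid. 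Transferring these properties across the identification $\mathcal{A}_q^{(s)}(n)=\mathcal{A}_q^{(s)}(n,\bold m)$ yields the corollary.

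There is essentially no technical obstacle; the only point requiring care is to verify that the Loewy, socle, and radical filtrations built inside $\mathcal{A}_q^{(s)}(n,\bold m)$ in Theorems 3.10 and 3.12 do not depend on the choice of $\bold m$ once $m\ell>s$: this is immediate from their construction via energy-degree stratification, since $\mathrm{Edeg}$ is intrinsic to the monomial $x^{(\alpha)}$ and the submodules $\mathcal V_i$ are defined purely in terms of Edeg. Thus the Loewy length $E(s){-}E(s)_0{+}1$ and the semisimple layers transfer verbatim to $\mathcal{A}_q^{(s)}(n)$, completing the proof.
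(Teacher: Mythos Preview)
Your proposal is correct and follows essentially the same approach as the paper: reduce to the truncated case by choosing $m$ with $\mathcal{A}_q^{(s)}(n)=\mathcal{A}_q^{(s)}(n,\bold m)$, then invoke Theorems 3.8 and 3.12. The paper is terser, picking the unique $m$ with $(m{-}1)\ell\le s\le m\ell{-}1$ rather than any sufficiently large $m$, and omitting your case split and your remark on independence of the filtration from $\bold m$, but the substance is identical.
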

\begin{proof} Since for any $s\in\mathbb{N}$, there is $m\in
\mathbb{N}$ such that $(m{-}1)\ell\leq s\leq m\ell{-}1$, then
$\mathcal{A}_q^{(s)}(n, \bold m)= \mathcal{A}_q^{(s)}(n)$. By
Theorems 3.8 and 3.12, $\mathcal{A}_q^{(s)}(n)$ is indecomposable
and rigid $\mathfrak{u}_q(\mathfrak{sl}_n)$-module.
\end{proof}

\section{Quantum Grassmann algebra and quantum de Rham cohomology}

\noindent{\it 4.1. $q$-differential over $\mathcal{A}_q(n)$.}
Denote by  $\wedge_q(n)=k\{dx_1, \ldots, dx_n\}/((dx_i)^2,
dx_jdx_i$ $+\,q^{-1}dx_idx_j, i<j)$, the quantum exterior algebra over $k$. Let
$\wedge_q(n)_{(s)}$ be the $s$-th homogeneous subspace of
$\wedge_q(n)$, as we know
$$
\wedge_q(n)_{(s)}=\span_k\{\,dx_{i_1}\wedge
dx_{i_2}\wedge\cdots\wedge dx_{i_s}\mid\, 1\le i_1<i_2<\cdots <i_s\le n\,\}.
$$
Identifying $\wedge_q(n)_{(1)}$ with the $\mathfrak{u}_q(\mathfrak{sl}_n)$-module
$V(\lambda_1)$ with highest weight vector $dx_1$, then $\wedge_q(n)\cong k[A^{0|n}]$,
as $\mathfrak{u}_q(\mathfrak{sl}_n)$-modules.

\begin{defi} Define a linear mapping
$d: \mathcal{A}_q(n)\longrightarrow \mathcal{A}_q(n)\otimes_k \wedge_q(n)_{(1)}$ as
$$
dx^{(\alpha)}=\sum_{i=1}^n\partial_i(x^{(\alpha)})\otimes dx_i=\sum_{i=1}^nq^{-\varepsilon_i*\alpha}x^{(\alpha-\varepsilon_i)}\otimes
dx_i, \quad \forall\, x^{(\alpha)}\in\mathcal{A}_q(n).
$$
Then $d$ is called the $q$-differential on $\mathcal{A}_q(n)$.
\end{defi}

\begin{prop} The $q$-differential $d$ is a $\mathfrak{u}_q(\mathfrak
{sl}_n)$-module homomorphism, that is, $d(u.\, x)=u.\, dx$, for $u\in \mathfrak{u}_q(\mathfrak
{sl}_n)$, $x\in\mathcal{A}_q(n)$,
provided that $\wedge_q(n)_{(1)}\cong V(\lambda_1)$ as $\mathfrak{u}_q(\mathfrak{sl}_n)$-module
with highest weight vector $dx_1$.
\end{prop}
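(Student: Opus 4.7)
\medskip
\textbf{Proof plan for Proposition 4.2.}

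The plan is to verify the identity $d(u.x) = u.(dx)$ on the algebra generators $u \in \{K_i^{\pm 1}, e_i, f_i\}$ of $\mathfrak{u}_q(\mathfrak{sl}_n)$ and on the basis $\{x^{(\alpha)}\}$ of $\mathcal{A}_q(n)$, using that the action on the tensor product $\mathcal{A}_q(n) \otimes \wedge_q(n)_{(1)}$ is defined by the coproduct from the footnote to Proposition~2.4, together with the prescribed action $e_i.dx_j = \delta_{i+1,j}\,dx_i$, $f_i.dx_j = \delta_{ij}\,dx_{i+1}$, $K_i^{\pm 1}.dx_j = q^{\pm(\delta_{ij} - \delta_{i+1,j})} dx_j$ coming from $\wedge_q(n)_{(1)} \cong V(\lambda_1)$.

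First, for $u = K_i^{\pm 1}$, since $\Delta(K_i^{\pm 1}) = K_i^{\pm 1} \otimes K_i^{\pm 1}$, both sides are weight vectors, and the total $K_i$-eigenvalue of any summand $x^{(\alpha - \varepsilon_j)} \otimes dx_j$ in $dx^{(\alpha)}$ is $q^{(\alpha_i - \delta_{ji}) - (\alpha_{i+1} - \delta_{j,i+1})}\cdot q^{\delta_{ji} - \delta_{j,i+1}} = q^{\alpha_i - \alpha_{i+1}}$, matching the eigenvalue on $x^{(\alpha)}$. So this case is immediate.

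Next, for $u = e_i$, I expand both sides using $\Delta(e_i) = e_i \otimes K_i + 1 \otimes e_i$, the action formulae (2.1)--(2.4) of Proposition~2.4, and the identity $\varepsilon_j * \alpha = \alpha_1 + \cdots + \alpha_{j-1}$. Term by term in the index $j$ of $dx_j$:
\begin{itemize}
\item For $j \notin \{i, i+1\}$: only the summand $e_i.x^{(\alpha-\varepsilon_j)} \otimes K_i.dx_j$ contributes on the right; the $q$-factors $q^{-\varepsilon_j*\alpha}$ agree with those on the left because $\varepsilon_j * (\varepsilon_i - \varepsilon_{i+1}) = 0$ when $j \le i$ or $j > i+1$.
\item For $j = i+1$: the factor $K_i.dx_{i+1} = q^{-1} dx_{i+1}$ matches the shift $\varepsilon_{i+1} * (\varepsilon_i - \varepsilon_{i+1}) = 1$ appearing in $d(e_i.x^{(\alpha)})$.
\item For $j = i$: two contributions appear on the right, one from $e_i.x^{(\alpha-\varepsilon_i)} \otimes K_i.dx_i$ and one from $x^{(\alpha-\varepsilon_{i+1})} \otimes e_i.dx_{i+1}$. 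Combining them and comparing to the left-hand contribution requires the $q$-Pascal-type identity
\[
q\,[\alpha_i] + q^{-\alpha_i} = [\alpha_i + 1],
\]
which together with $\varepsilon_{i+1}*\alpha - \varepsilon_i * \alpha = \alpha_i$ produces the desired coefficient $q^{-\varepsilon_i * \alpha}[\alpha_i + 1]$.
\end{itemize}
The case $u = f_i$ is symmetric, using $\Delta(f_i) = f_i \otimes 1 + K_i^{-1} \otimes f_i$ and $f_i.dx_i = dx_{i+1}$, and reduces similarly to the $q$-identity $q^{-1}[\alpha_{i+1}] + q^{\alpha_{i+1}} = [\alpha_{i+1}+1]$.

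The main obstacle is purely bookkeeping: the $q$-weights $q^{-\varepsilon_j * \alpha}$ built into $d$, the $K_i$-twist in the coproduct of $e_i$ (or $K_i^{-1}$ for $f_i$), and the $q$-numbers from the $e_i$-, $f_i$-actions on $\mathcal{A}_q(n)$ must be reconciled, and the critical nontrivial step is precisely the collapse of the two $dx_i$-contributions (resp.\ $dx_{i+1}$-contributions) to one via the $q$-Pascal identity above. Once these two generator cases are verified, the proposition follows.
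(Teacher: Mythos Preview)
Your proposal is correct and follows essentially the same approach as the paper: verify the identity on the generators $K_i^{\pm1}$, $e_i$, $f_i$ against basis elements $x^{(\beta)}$, split the $e_i$-computation by the index $j$ of $dx_j$, and reduce the only nontrivial case to the $q$-Pascal identity $q[\beta_i]+q^{-\beta_i}=[\beta_i{+}1]$ (equivalently, the paper's relation $q^{-\varepsilon_i*\beta}[\beta_i{+}1]=q^{-\varepsilon_i*\beta}[\beta_i]\,q+q^{-\varepsilon_{i+1}*\beta}$). The only cosmetic difference is ordering: the paper does $e_i$ first and $K_i$ last, whereas you dispose of $K_i$ immediately by a weight argument.
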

\begin{proof} It suffices to consider the actions of generators of
$\mathfrak{u}_q(\mathfrak{sl}_n)$ on the basis elements $x^{(\beta)}$ of
$\mathcal{A}_q(n)$.



\smallskip
(1) For $e_i$ $(i=1, \cdots, n{-}1)$: On the one hand, noting that
\[\begin{split}
q^{-\varepsilon_j*(\beta+\varepsilon_i-\varepsilon_{i+1})}&=q^{-\varepsilon_j*\beta}, \qquad \text{\it for } j\le i, \text{\it or } j>i{+}1,\\
q^{-\varepsilon_{i+1}*(\beta+\varepsilon_i-\varepsilon_{i+1})}&=q^{-\varepsilon_{i+1}*\beta}q^{-1},\qquad \text{\it for } j=i{+}1,
\end{split}
\]
we have
\[
\begin{split}
d(e_i.\,
x^{(\beta)})&=d([\beta_i{+}1]\,x^{(\beta+\varepsilon_i-\varepsilon_{i+1})})\\
&=[\beta_i{+}1]\,\sum_{j=1}^nq^{-\varepsilon_j*(\beta+\varepsilon_i-\varepsilon_{i+1})}
x^{(\beta+\varepsilon_i-\varepsilon_{i+1}-\varepsilon_j)}\otimes
dx_j\\
&=[\beta_i{+}1]\,\Bigl(\sum_{j<i,{\it or } j>i+1}q^{-\varepsilon_j*\beta}x^{(\beta-\varepsilon_j+\varepsilon_i-\varepsilon_{i+1})}\otimes
dx_j\\
&\quad+\,q^{-\varepsilon_i*\beta}x^{(\beta-\varepsilon_{i+1})}\otimes
dx_i+q^{-\varepsilon_{i+1}*\beta-1}x^{(\beta+\varepsilon_i-2\varepsilon_{i+1})}\otimes
dx_{i+1}\Bigr).
\end{split}
\]

On the other hand, as $\Delta(e_i)=e_i\otimes K_i+1\otimes e_i$, we have
\[
e_i.\,(x^{(\beta-\varepsilon_j)}\otimes dx_j)=\left\{
\begin{aligned}
&[\beta_i{+}1]\,x^{(\beta-\varepsilon_j+\varepsilon_i-\varepsilon_{i+1})}\otimes
dx_j, & j<i, {\it or } \ j>i{+}1 \\
&[\beta_i]\,q\,x^{(\beta-\varepsilon_{i+1})}\otimes dx_i,  &j=i\\&x^{(\beta-\varepsilon_{i+1})}\otimes dx_i&j=i{+}1\\
&\quad+\,q^{-1}[\beta_i{+}1]\,x^{(\beta+\varepsilon_i-2\varepsilon_{i+1})}\otimes
dx_{i+1}.
\end{aligned}
\right.
\]
Observing that $q^{-\varepsilon_i*\beta}[\beta_i{+}1]=q^{-\varepsilon_i*\beta}[\beta_i]\,q+q^{-\varepsilon_{i+1}*\beta}$,
we finally obtain
\[
\begin{split}
 e_i.\,d(x^{(\beta)})&
 =e_i.\,\Bigl(\sum_{j=1}^nq^{-\varepsilon_j*\beta}x^{(\beta-\varepsilon_j)}\otimes dx_j\Bigr)
 =\sum_{j=1}^nq^{-\varepsilon_j*\beta}e_i.\,(
 x^{(\beta-\varepsilon_j)}\otimes dx_j)\\
 &=d(e_i.\,x^{(\beta)}).
\end{split}
\]

\smallskip
(2) Similarly, we can check that $d(f_i.\,x^{(\beta)})=f_i.\,(dx^{(\beta)})$, for $1\le i<n$.

\smallskip
(3) For $K_i$ $(i=1, \cdots, n{-}1)$:
\[\begin{split}
K_i.\,dx^{(\beta)}&=\sum_{j=1}^nq^{-\varepsilon_j*\beta}
K_i.\,x^{(\beta-\varepsilon_j)}\otimes K_i.\, dx_j\\
&=\sum_{j=1}^nq^{-\varepsilon_j*\beta}q^{\beta_i-\delta_{ij}-\beta_{i+1}+\delta_{i+1,j}}x^{(\beta-\varepsilon_j)}\otimes
q^{\delta_{ij}-\delta_{i+1,j}}dx_j\\
&=d(K_i.\,x^{(\beta)}).
\end{split}\]

\smallskip
This completes the proof.
\end{proof}

\noindent
{\it 4.2. Quantum Grassmann algebra and quantum de Rham Complex.}
It is a well-known fact that there exists a braiding $\aleph: \wedge_q(n)_{(1)}\otimes\mathcal{A}_q(n)\longrightarrow \mathcal{A}_q(n)\otimes\wedge_q(n)_{(1)}$, which is a $\mathfrak{u}_q(\mathfrak{sl}_n)$-module homomorphism.
This $\aleph$ also induces braidings $\aleph_s: \wedge_q(n)_{(s)}\otimes\mathcal{A}_q(n)\longrightarrow \mathcal{A}_q(n)\otimes\wedge_q(n)_{(s)}$. Now let us define the quantum Grassmann algebra as follows.
\begin{defi}
Let $\Omega_q(n):=\mathcal{A}_q(n)\otimes\wedge_q(n)$ with product
$$(x^{(\alpha)}\otimes \omega_s)\cdot (x^{(\beta)}\otimes \omega_r)=
x^{(\alpha)}\aleph_s(\omega_s\otimes x^{(\beta)})\,\omega_r, \quad \omega_s\in\wedge_q(n)_{(s)}, \  \omega_r\in\wedge_q(n)_{(r)}.$$
$\Omega_q(n)$ is said the {\it quantum Grassmann algebra} over $\mathcal A_q(n)$.
 $\Omega_q(n)=\bigoplus_{s=0}^n\Omega_q(n)^{(s)}$, where $\Omega_q(n)^{(s)}:=\mathcal{A}_q(n)\otimes\wedge_q(n)_{(s)}$.
\end{defi}
Define the linear mappings as follows.
\begin{gather*}
d^s: \ \Omega_q(n)^{(s)}
\longrightarrow\Omega_q(n)^{(s+1)}, \\
d^s(x^{(\alpha)}\otimes dx_{i_1}\wedge\cdots\wedge
dx_{i_s})=\sum^n_{j=1}q^{-\varepsilon_j*\alpha}x^{(\alpha-\varepsilon_j)}\otimes
dx_j\wedge dx_{i_1}\wedge\cdots\wedge dx_{i_s}.
\end{gather*}
Specially,  $d^0=d$ for $s=0$; $d^n=0$ for $s=n$.

By Proposition 4.1, $d$ is a homomorphism of
$\mathfrak{u}_q(\mathfrak{sl}_n)$-modules, then by definition, it follows readily
that $d^s$ $(s=1, \cdots, n)$ are homomorphisms of
$\mathfrak{u}_q(\mathfrak{sl}_n)$-modules.

\begin{prop} $(\Omega_q(n), d^\bullet)$ is a complex, i.e.,
$d^{s+1}d^{s}=0$, for $s=0, 1, \cdots, n$.
\end{prop}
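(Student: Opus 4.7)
The plan is to verify $d^{s+1}d^s = 0$ directly on the basis elements $x^{(\alpha)}\otimes dx_{i_1}\wedge\cdots\wedge dx_{i_s}$. Applying the two differentials in succession, one gets
\[
d^{s+1}d^s\bigl(x^{(\alpha)}\otimes dx_{i_1}\wedge\cdots\wedge dx_{i_s}\bigr) = \sum_{j,k=1}^n q^{-\varepsilon_j\ast\alpha}\, q^{-\varepsilon_k\ast(\alpha-\varepsilon_j)}\, x^{(\alpha-\varepsilon_j-\varepsilon_k)}\otimes dx_k\wedge dx_j\wedge \omega,
\]
where $\omega := dx_{i_1}\wedge\cdots\wedge dx_{i_s}$. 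I would then show that all diagonal terms $j=k$ vanish because $(dx_j)^2 = 0$ in $\wedge_q(n)$, and that the remaining off-diagonal terms cancel pairwise over the pairs $\{(j,k),(k,j)\}$ with $j<k$.

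The heart of the argument is a comparison of the two scalar coefficients in each such pair. Using bilinearity of $\ast$ together with the evaluations $\varepsilon_k\ast\varepsilon_j = 1$ for $k>j$ and $\varepsilon_j\ast\varepsilon_k = 0$ for $j<k$ (which follow directly from the definition $\alpha\ast\beta = \sum_{t<i}\alpha_i\beta_t$ in Section 2.2), one obtains $\varepsilon_k\ast(\alpha-\varepsilon_j) = \varepsilon_k\ast\alpha - 1$ and $\varepsilon_j\ast(\alpha-\varepsilon_k) = \varepsilon_j\ast\alpha$. Consequently the scalar in front of the $(j,k)$-term is exactly $q$ times the one in front of the $(k,j)$-term. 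The Manin quadratic relation $dx_k\wedge dx_j = -q^{-1}\,dx_j\wedge dx_k$ for $j<k$ then supplies the compensating factor $-q^{-1}$, so the two contributions cancel and the whole double sum vanishes.

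No essential obstacle is anticipated; the proof reduces to a direct computation once the exponent bookkeeping is handled carefully. The only subtlety worth isolating in the write-up is the identity $\varepsilon_k\ast\varepsilon_j - \varepsilon_j\ast\varepsilon_k = 1$ for $j<k$, which is exactly the datum that matches the $q^{-1}$ in the Manin relation of $\wedge_q(n)$. In this sense the complex condition $d^{s+1}d^s = 0$ is forced by the compatibility between the bicharacter $\theta$ governing the commutation in $\mathcal A_q(n)$ and the quadratic relations defining $\wedge_q(n)$.
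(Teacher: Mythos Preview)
Your proposal is correct and follows essentially the same route as the paper. The only cosmetic difference is that the paper first reduces to the case $s=0$ (observing that the general case follows by carrying along the inert factor $\omega=dx_{i_1}\wedge\cdots\wedge dx_{i_s}$) and then performs exactly the same pairwise cancellation you describe, using $\varepsilon_k\ast\varepsilon_j=1$ for $k>j$ to match the factor $q^{-1}$ in the Manin relation $dx_k\wedge dx_j=-q^{-1}dx_j\wedge dx_k$.
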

\begin{proof} Observing the relationships between $d^s$ and $d^0$ for
$s=1,\cdots,n$, it is enough to check the case $s=0$.

Consider the actions of $d^1d^0$ over the basis elements of
$\mathcal{A}_q(n)$.

For any $x^{(\beta)}\in\mathcal{A}_q(n)$,
\[
\begin{split}
d^1d^0(x^{(\beta)})&=
d^1\Bigl(\sum_{j=1}^nq^{-\varepsilon_j*\beta}x^{(\beta-\varepsilon_j)}\otimes
dx_j\Bigr)
\\&=\sum_{j=1}^n\sum_{i=1}^nq^{-\varepsilon_j*\beta}
q^{-\varepsilon_i*(\beta-\varepsilon_j)}x^{(\beta-\varepsilon_i-\varepsilon_j)}\otimes dx_i\wedge dx_j\\
&= \sum_{i<j}(q^{-\varepsilon_j*\beta-\varepsilon_i*(\beta-\varepsilon_j)}
-q^{-1-\varepsilon_i*\beta-\varepsilon_j*(\beta-\varepsilon_i)})\,x^{(\beta-\varepsilon_i-\varepsilon_j)}\otimes
dx_i\wedge dx_j\\
&=\sum_{i<j}q^{-\varepsilon_i*\beta-\varepsilon_j*\beta}(1-1)\,x^{(\beta-\varepsilon_i-\varepsilon_j)}\otimes
dx_i\wedge dx_j=0.
\end{split}
\]
Thus, $d^1d^0=0$. By definition, it is easy to see that
$$
d^{s+1}d^{s}=0,\qquad s=1, \cdots, n.
$$

This completes the proof.
\end{proof}

For the complex $(\Omega_q(n), d^s)$ given in Proposition 4.4, that is,
\begin{gather*}
0{\longrightarrow}\Omega_q(n)^{(0)} \stackrel{d^0}
{\longrightarrow}
\cdots\stackrel{d^{s-1}}
{\longrightarrow}\Omega_q(n)^{(s)}\stackrel{d^s}
{\longrightarrow}\Omega_q(n)^{(s+1)}\stackrel{d^{s+1}}
{\longrightarrow}\cdots
\stackrel{d^{n-1}}
{\longrightarrow}\Omega_q(n)^{(n)}\stackrel{d^{n}}
{\longrightarrow}0,
\end{gather*}
when $q=1$, this is the standard de Rham complex of polynomial algebra
with $n$ variables. Thus, we call it the {\it quantum de Rham complex}.

\smallskip
\noindent
{\it 4.3. Quantum de Rham subcomplex $(\Omega_q(n,\bold m), d^\bullet)$ and its cohomologies.}
Now define $\Omega_q(n,\bold m):=\bigoplus_{s=0}^n\Omega_q(n,\bold m)^{(s)}$, where
$\Omega_q(n,\bold m)^{(s)}=\mathcal{A}_q(n, \bold
m)\otimes\wedge_q(n)_{(s)}$. Note that $d^s(\Omega_q(n, \bold
m)^{(s)})\subseteq \Omega_q(n, \bold
m)^{(s+1)}$, for $s=0, 1, \cdots, n$.
So, we get a quantum de Rham subcomplex $(\Omega_q(n,\bold m), d^\bullet)$.

For $\gamma\in\mathbb{Z}^n_+$, denote briefly by $\Omega^{(s)}_\gamma$ the
weight space corresponding to the weight
$\gamma=\sum^n_{i=1}\gamma_i\varepsilon_i$ of $\Omega_q(n,
\bold m)^{(s)}$, then
$$
\Omega^{(s)}_\gamma=\span_k\left.\Bigl\{x^{(\gamma{-}\sum^s_{j=1}\varepsilon_{i_j})}{\otimes}
\,dx_{i_1}{\wedge\cdots\wedge}\,dx_{i_s}\in\Omega_q(n,
\bold m)^{(s)}\,\right|\,\bold 0\,{\le}\,\gamma{-}\sum^s_{j=1}\varepsilon_{i_j}\,{\le}\,\bold
m\Bigr\}.
$$

\begin{lemm}
Given $\gamma\in\mathbb{Z}^n_+$ with $k_\gamma$ coordinates equal to $m\ell$ and $h_\gamma$ coordinates
equal to $0$. Then $\Omega^{(s)}_\gamma\neq 0$ if and only if $k_\gamma\leq s$,
and $\dim\,\Omega^{(s)}_\gamma=\binom{n-k_\gamma-h_\gamma}{s-k_\gamma}$.
\end{lemm}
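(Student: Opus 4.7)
The plan is to unpack the explicit basis description of $\Omega^{(s)}_\gamma$ given in the definition and translate the two inequalities $\bold 0 \le \gamma - \sum_{j=1}^s \varepsilon_{i_j} \le \bold m$ into purely combinatorial constraints on the strictly increasing tuple $(i_1,\dots,i_s)$; then count.

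First I would partition the index set $\{1,\dots,n\}$ according to the type of coordinate of $\gamma$: the $k_\gamma$ \emph{saturated} indices where $\gamma_i = m\ell$, the $h_\gamma$ \emph{empty} indices where $\gamma_i = 0$, and the remaining $n-k_\gamma-h_\gamma$ \emph{middle} indices where $0<\gamma_i<m\ell$. Note this partition is well-defined precisely because each coordinate $\gamma_i$ lies in $\{0,1,\dots,m\ell\}$ (otherwise $\Omega^{(s)}_\gamma$ would be zero by vacuity, since $x^{(\gamma-\sum \varepsilon_{i_j})}$ cannot lie in $\mathcal A_q(n,\bold m)$).

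Next I would read off the two constraints one index at a time. The lower bound $\gamma-\sum_{j=1}^s \varepsilon_{i_j} \ge \bold 0$ forces $\gamma_{i_j}\ge 1$ for every $j$, so the chosen indices must avoid all $h_\gamma$ empty coordinates. The upper bound $\gamma-\sum_{j=1}^s \varepsilon_{i_j} \le \bold m$, evaluated coordinate-wise, is automatic at every $i_j$ (since $\gamma_{i_j}-1\le m\ell -1$), while at a coordinate $k\notin\{i_1,\dots,i_s\}$ it reads $\gamma_k \le m\ell - 1$; thus every saturated coordinate must appear among $i_1,\dots,i_s$. Combining both conditions, the admissible index sets are exactly those $s$-subsets of $\{1,\dots,n\}$ that contain all $k_\gamma$ saturated indices, are disjoint from all $h_\gamma$ empty indices, and fill the remaining $s-k_\gamma$ slots from the $n-k_\gamma-h_\gamma$ middle indices.

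Finally, this count gives $\binom{n-k_\gamma-h_\gamma}{s-k_\gamma}$ basis elements; nonvanishing is equivalent to $k_\gamma \le s$ (with $s-k_\gamma \le n-k_\gamma-h_\gamma$ absorbed into the binomial convention). The proof is essentially bookkeeping: the main (minor) subtlety is making sure that when the chosen index set happens to contain some $i_j$ with $\gamma_{i_j}=1$ the resulting basis vector is still nonzero in $\mathcal A_q(n,\bold m)\otimes \wedge_q(n)_{(s)}$, which is immediate since $x^{(\gamma-\sum\varepsilon_{i_j})}$ is a legitimate basis element of $\mathcal A_q(n,\bold m)$ as soon as the exponent lies in $\mathbb Z_+^n$ and is $\le \bold m$.
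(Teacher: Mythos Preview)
Your proof is correct and follows essentially the same approach as the paper: both arguments partition the coordinates of $\gamma$ into those equal to $m\ell$, those equal to $0$, and the rest, then observe that an admissible $s$-subset $\{i_1,\dots,i_s\}$ must contain all saturated indices and avoid all empty ones, yielding the binomial count. Your write-up is in fact more explicit than the paper's, which compresses the same reasoning into a couple of sentences.
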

\begin{proof} For the given $\gamma\in\mathbb{Z}^n_+$ with
$\gamma_{i_1}=\cdots=\gamma_{{i_k}_\gamma}=m\ell$ and $\gamma_{\imath_1}=\cdots=\gamma_{{\imath_h}_\gamma}=0$, if
$\Omega^{(s)}_\gamma\neq0$, there exists a pairwise distinct sequence $(j_1,\cdots,j_s)$, such that
$\{j_1,\cdots,j_s\}\cap\{\imath_1,\cdots,{\imath_h}_\gamma\}=\varnothing$ and $0\neq
x^{(\gamma-\sum^s_{r=1}\varepsilon_{j_r})}\otimes dx_{j_1}\wedge \cdots
\wedge dx_{j_s}\in \Omega^{(s)}_\gamma$, then the pairwise distinct sequence $(i_1,\cdots,{i_k}_\gamma)$ is a subsequence
of $(j_1,\cdots,j_s)$, so,
$k_\gamma\leq s$. And vice versa.
Hence, $\dim\,\Omega^{(s)}_\gamma=\binom{n-k_\gamma-h_\gamma}{s-k_\gamma}$.
\end{proof}

\begin{theorem}
For the quantum de Rham subcomplex
$(\Omega_q(n, \bold m), d^\bullet)$ below,
$$
0{\longrightarrow}\Omega_q(n, \bold m)^{(0)}
\cdots\stackrel{d^{s-1}} {\longrightarrow}\Omega_q(n, \bold m)^{(s)}\stackrel{d^s}
{\longrightarrow}\Omega_q(n, \bold m)^{(s+1)}\stackrel{d^{s+1}}
{\longrightarrow}\cdots
\Omega_q(n, \bold m)^{(n)}\stackrel{d^{n}} {\longrightarrow}0,
$$
one has
\[
\begin{split}
H^s(\Omega_q(n, \bold m))&=\ker d^s/\im d^{s-1}\\
&\cong\bigoplus_{1\le i_1<\cdots<i_s\le n}k\,[\,x^{(\sum^s_{j=1}(m\ell-1)\varepsilon_{i_j})}\otimes dx_{i_1}\wedge
\cdots\wedge dx_{i_s}],
\end{split}
\]
as $k$-vector spaces, and $\dim H^s(\Omega_q(n, \bold m))=\binom{n}{s}$, for $s=0, 1,
\cdots, n$.
\end{theorem}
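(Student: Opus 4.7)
The plan is to split the complex by the natural $\mathbb{Z}_+^n$-weight grading, observe that each weight subcomplex is either one-dimensional (concentrated in a single degree) or a shifted Koszul-type complex that is acyclic, and then sum up the contributions. First, each $d^s$ preserves the weight $\gamma=\alpha+\sum_j\varepsilon_{i_j}$ of $x^{(\alpha)}\otimes dx_{i_1}\wedge\cdots\wedge dx_{i_s}$, since $d^s$ merely replaces $x^{(\alpha)}$ by $x^{(\alpha-\varepsilon_j)}$ while wedging in $dx_j$. Hence $(\Omega_q(n,\bold m),d^\bullet)=\bigoplus_\gamma(\Omega_\gamma^\bullet,d^\bullet)$ and $H^s(\Omega_q(n,\bold m))=\bigoplus_\gamma H^s(\Omega_\gamma^\bullet)$.

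For fixed $\gamma\in\mathbb{Z}_+^n$, I would partition $[n]=A_\gamma\sqcup C_\gamma\sqcup B_\gamma$ according to whether $\gamma_i=m\ell$, $0<\gamma_i<m\ell$, or $\gamma_i=0$. By Lemma~4.5 (combined with its proof), a basis element $x^{(\gamma-\sum_{i\in S}\varepsilon_i)}\otimes dx_{\sigma(S)}$ of $\Omega_\gamma^{(s)}$ requires $A_\gamma\subseteq S\subseteq A_\gamma\cup C_\gamma$ with $|S|=s$, i.e.\ $S=A_\gamma\sqcup T$ for $T\subseteq C_\gamma$. The formula defining $d^s$ then shows it sends the basis element indexed by $T$ to a sum, over $j\in C_\gamma\setminus T$, of nonzero $q$-scalar multiples of basis elements indexed by $T\cup\{j\}$ (indices in $B_\gamma$ cannot appear since they would force a negative exponent). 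If $C_\gamma=\emptyset$, then $\Omega_\gamma^\bullet$ is one-dimensional, concentrated in degree $|A_\gamma|$, so $H^{|A_\gamma|}(\Omega_\gamma^\bullet)=k\cdot[x^{(\sum_{i\in A_\gamma}(m\ell-1)\varepsilon_i)}\otimes dx_{\sigma(A_\gamma)}]$ and all other degrees vanish. Such $\gamma$ correspond bijectively to subsets $A_\gamma\subseteq[n]$, producing exactly the cocycle representatives in the statement.

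If $C_\gamma\neq\emptyset$, I would show $\Omega_\gamma^\bullet$ is acyclic. The cleanest route is to identify $\Omega_\gamma^\bullet$, as a chain complex of $k$-vector spaces, with the braided tensor product $\bigotimes_{i=1}^n\Omega_q(1,m\ell{-}1)_{(\gamma_i)}^\bullet$ of the $n$ rank-one weight-$\gamma_i$ subcomplexes, and then invoke the Künneth formula over the field $k$. The rank-one weight-$\gamma_i$ subcomplex is elementary: it is $k\to 0$ for $\gamma_i=0$ (so $H^0=k$), $0\to k$ for $\gamma_i=m\ell$ (so $H^1=k$), and $k\xrightarrow{\sim}k$, $x^{(\gamma_i)}\mapsto x^{(\gamma_i-1)}\otimes dx$, for $0<\gamma_i<m\ell$ (hence acyclic). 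Because $C_\gamma\neq\emptyset$ forces at least one acyclic tensor factor, the whole product is acyclic. Collecting contributions from both cases yields $H^s(\Omega_q(n,\bold m))\cong\bigoplus_{1\le i_1<\cdots<i_s\le n}k\,[x^{(\sum_{j=1}^s(m\ell-1)\varepsilon_{i_j})}\otimes dx_{i_1}\wedge\cdots\wedge dx_{i_s}]$ and, counting size-$s$ subsets of $[n]$, $\dim H^s=\binom{n}{s}$.

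The main technical obstacle is the acyclicity claim for $C_\gamma\neq\emptyset$. Via the Künneth route, the content is to verify that the differential $d^s$---which encodes the bicharacter factors $q^{-\varepsilon_j*\alpha}$ and the quantum wedge relations---is genuinely the ``graded sum'' differential on the braided tensor products $\mathcal{A}_q(n)\cong\mathcal{A}_q(1)^{\otimes n}$ and $\wedge_q(n)\cong\wedge_q(1)^{\otimes n}$; this amounts to a $q$-Leibniz rule for $d$ with the correct braiding, and is a matter of careful $q$-bookkeeping once one reorders tensor factors. A more pedestrian alternative is to construct a direct contracting homotopy on $\Omega_\gamma^\bullet$: fix $j_0\in C_\gamma$ and send $x^{(\alpha)}\otimes dx_{i_1}\wedge\cdots\wedge dx_{i_s}$ to a $q$-scalar multiple of $x^{(\alpha+\varepsilon_{j_0})}\otimes dx_{i_1}\wedge\cdots\widehat{dx_{i_{t_0}}}\cdots\wedge dx_{i_s}$ if $j_0=i_{t_0}$, and to $0$ otherwise, tuning the $q$-scalars so that $dh+hd$ equals a nonzero scalar on $\Omega_\gamma^\bullet$ (the constraint $0<\gamma_{j_0}<m\ell$ guarantees $\alpha+\varepsilon_{j_0}\le\bold m$, so the lift is well defined).
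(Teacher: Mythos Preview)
Your proposal is correct and shares the paper's opening move---decomposing by the $\mathbb{Z}_+^n$-weight $\gamma$ and separating the case $C_\gamma=\emptyset$ (your notation; the paper phrases this as $k_\gamma=s$ and $h_\gamma=n-s$) from the case $C_\gamma\ne\emptyset$---but the two diverge in how the acyclicity of the nontrivial weight subcomplexes is established. The paper does not invoke K\"unneth or build a contracting homotopy; instead it argues by induction on $s$, bounding $\dim\ker d^{s+1}|_{\Omega_\gamma^{(s+1)}}$ from above by exhibiting an explicit full-rank diagonal block in the matrix of $d^{s+1}$ (indexed by words beginning with the smallest index $b$ with $\gamma_b\ne 0$), and from below by the inductive value of $\dim\im d^s|_{\Omega_\gamma^{(s)}}$, forcing equality. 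Your K\"unneth/homotopy route is cleaner and more conceptual: it explains \emph{why} the complex is acyclic (it is a twisted Koszul complex with an acyclic tensor factor) rather than verifying it degree by degree, and it avoids the somewhat delicate matrix bookkeeping. The paper's approach, by contrast, is entirely self-contained and elementary, requiring no verification that the $q$-twisted differential really is a braided tensor-product differential nor any tuning of homotopy scalars; it also makes the rank of each $d^s$ explicit, which is reused in proving Proposition~4.9.
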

\begin{proof}
Note the facts that $\Omega_q(n,\bold m)^{(s)}=\bigoplus_{\gamma\in\mathbb Z_+^n}\Omega^{(s)}_\gamma$ and
each differential $d^s$ preserves the weight-gradings.
It suffices to consider the restriction of the complex to weight $\gamma$, for any given $\gamma$.
$$
0\longrightarrow\Omega_\gamma^{(0)}\stackrel{\mathrm{d^0}}\longrightarrow\cdots\stackrel{\mathrm{d^{s-1}}} \longrightarrow\Omega^{(s)}_\gamma\stackrel{d^s}
\longrightarrow\Omega^{(s+1)}_\gamma\stackrel{d^{s+1}}
\longrightarrow\cdots\stackrel{d^{n-1}}
\longrightarrow\Omega^{(n)}_\gamma\stackrel{d^n}
\longrightarrow0.
$$

If $\gamma$ has $k_\gamma$ coordinates equal to $m\ell$ and
$h_\gamma$ coordinates equal to $0$, then by Lemma 4.5,
$\dim\,\Omega^{(s)}_\gamma=\binom{n-k_\gamma-h_\gamma}{s-k_\gamma}$.

(1) Consider the action of $d^0$ on $\mathcal{A}_q(n, \bold m)$.

For $\gamma=\bold 0$, it is clear that
$d^0x^{(\gamma)}=dx^{(\gamma)}=0$.

For $\gamma\ne\bold 0$, there exists $\gamma_j\neq0$ for some $j\ (1\leq
j\leq n)$, then
$$
d^0x^{(\gamma)}=dx^{(\gamma)}=\sum^n_{i=1}q^{-\varepsilon_i*\gamma}x^{\gamma-\varepsilon_i}\otimes
dx_i\neq0.
$$

So, $\ker d^0\cong k$,
$H^0(\Omega_q(n, \bold m))=\ker d^0/\im d^{-1}\cong k$,
$\dim H^0(\Omega_q(n, \bold m))=1$.

(2) Consider the behavior of $d^{s-1}, d^s$ at $\Omega_\gamma^{(s)}$.

For $\bold 0\neq\gamma\in\mathbb{Z}^n_{\geq0}$, by Lemma 4.5, $k_\gamma<s$ if and only if $\Omega^{(s-1)}_\gamma\neq0$;
$k_\gamma=s$ if and only if $\Omega^{(s-1)}_\gamma=0$ and
$\Omega^{(s)}_\gamma\neq0$; and $k_\gamma>s$ if and only if
$\Omega^{(s)}_\gamma=0\,(\,=\Omega^{(s-1)}_\gamma)$.

Obviously, when $k_\gamma>s$,
$\im d^{s-1}|_{\Omega^{(s-1)}_\gamma}=\ker\,d^s|_{\Omega^{(s)}_\gamma}=0$.
Namely, this case is no contribution to $H^s(\Omega_q(n,\bold m))$.
So, it suffices to consider the cases $k_\gamma\le s$.

We are now in a position to show the following assertions by induction on
$s\geq1$:

Case ($\textrm{i}$): When $\Omega^{(s{-}1)}_\gamma\ne 0$, i.e., $k_\gamma<s$, we must have
$$
\im d^{s{-}1}|_{\Omega^{(s{-}1)}_\gamma}=\ker d^s|_{\Omega^{(s)}_\gamma},\qquad \dim\im d^s|_{\Omega^{(s)}_\gamma}=\binom{n{-}k_\gamma{-}h_\gamma{-}1}{s{-}k_\gamma}.
$$
So, this case is also no contribution to $H^s(\Omega_q(n,\bold m))$.

Case ($\textrm{ii}$): When $\Omega^{(s-1)}_\gamma=0$ but $\Omega^{(s)}_\gamma\neq0$, i.e., $k_\gamma=s$, we must have
$$
{\Omega^{(s)}_\gamma}=\span_k \left\{
x^{(\sum^{s}_{j=1}(m\ell-1)\varepsilon_{i_j})}\otimes dx_{i_1}\wedge\cdots\wedge
dx_{i_{s}}\right\}=\ker d^s|_{\Omega^{(s)}_\gamma}.
$$

In summary, the above analysis leads to
\begin{equation*}
\begin{split}
H^s(\Omega_q(n, \bold m))&=\ker d^s/\im d^{s-1}=\bigoplus_{\gamma\in\mathbb Z_+^n}\ker d^s|_{\Omega^{(s)}_\gamma}/\im d^{s-1}|_{\Omega^{(s-1)}_\gamma}\\
&\cong\bigoplus_{1\le i_1<\cdots<i_s\le n}k\,[\,x^{(\sum^s_{j=1}(m\ell-1)\varepsilon_{i_j})}\otimes dx_{i_1}\wedge
\cdots\wedge dx_{i_s}],
\end{split}
\end{equation*}
and
$\dim H^s(\Omega_q(n, \bold m))=\binom{n}{s}$.

\medskip
\noindent
{\it Proofs of cases} (i) \& (ii):

\smallskip
For $s=1$: Assume that $\bold 0\ne \gamma\in\mathbb Z_+^n$, without loss of generality.

When $\Omega^{(0)}_\gamma\neq0$, i.e.,
$k_\gamma=0$: $\bold 0<\gamma\le \bold m$, $\dim\Omega^{(0)}_\gamma=1$. This means
$\Omega_\gamma^{(1)}\ne0$.
Now assume $0\ne\sum^n_{j=1}a_jx^{(\gamma-\varepsilon_j)}\otimes dx_j\in
\ker d^1$ with $a_j\in k$, i.e.,
$$
d^1\Bigl(\sum^n_{j=1}a_jx^{(\gamma-\varepsilon_j)}\otimes dx_j\Bigr)=\sum_{i<j}\bigl(a_jq^{-\varepsilon_i*\gamma}-
a_iq^{-\varepsilon_j*\gamma}\bigr)\,x^{(\gamma-\varepsilon_i-\varepsilon_j)}\otimes
dx_i\wedge dx_j=0,
$$
we obtain a system of equations with indeterminates $a_i$ $(i=1,
\cdots, n)$:
$$
a_jq^{-\varepsilon_i*\gamma}-a_iq^{-\varepsilon_j*\gamma}=0,\qquad
\forall \ 1\leq i<j\leq n,
$$
and its solution is
$a_j=a_iq^{-\varepsilon_j*\gamma+\varepsilon_i*\gamma}$, for
$1\le i<j\le n$, that is,
$$
a_j=a_1q^{-\varepsilon_j*\gamma}, \qquad \forall \ 1\le j\le n.
$$
So, $\dim\ker d^1|_{\Omega^{(1)}_\gamma}=1$, $\ker d^1|_{\Omega^{(1)}_\gamma}=\im d^0|_{\Omega^{(0)}_\gamma}$.
$\im d^0|_{\Omega^{(0)}_\gamma}=\ker d^1|_{\Omega^{(1)}_\gamma}$,
moreover, $\dim d^1(\Omega^{(1)}_\gamma)=\dim \Omega^{(1)}_\gamma-\dim\ker d^1|_{\Omega^{(1)}}$ $=\binom{n-k_\gamma-h_\gamma-1}{1}$.

When
$\Omega^{(0)}_\gamma=0$ but $\Omega^{(1)}_\gamma\ne0$, i.e., $k_\gamma=1$:  by Lemma 4.5, $\exists \, ! \ i$, such that
$\gamma_i=m\ell$ and $\dim \Omega^{(1)}_\gamma=1$. This implies $\gamma=m\ell\varepsilon_i$, and
$\Omega^{(1)}_\gamma=\span_k\{\,x^{((m\ell-1)\varepsilon_i)}\otimes
dx_i\,\}=\ker d^1|_{\Omega^{(1)}_\gamma}$.

\smallskip
Now for $s>1$, suppose for any $s'\le s$, the assertions are true. We
consider the case $s{+}1$:

Assume that
$$
d^{s+1}\Bigl(\sum_{i_1<\cdots<i_{s+1}}a_{i_1\cdots i_{s+1}}
x^{(\gamma-\sum^{s+1}_{j=1}\varepsilon_{i_j})}\otimes dx_{i_1}\wedge\cdots
\wedge dx_{i_{s+1}}\Bigr)=0,
$$
we can obtain a system of linear equations with indeterminates $a_{i_1{\cdots}i_{s+1}}$
$(1\le i_1<{\cdots}<i_{s+1}\le n)$,
$$
\sum^{s+2}_{j=1}a_{i_1\cdots \widehat{i_j}\cdots
i_{s+2}}(-1)^{j-1}q^{-\varepsilon_{i_j}*\gamma}=0,\qquad \forall \
1\le i_1<{\cdots}<i_{s+2}\le n.\leqno({\diamond})
$$

Set $P=\{\,i_1{\cdots}i_{s+1}\mid i_1<{\cdots}<i_{s+1},
x^{(\gamma{-}\sum_{j=1}^{s+1}\varepsilon_{i_j})}\otimes dx_{i_1}\wedge{\cdots}\wedge
\,dx_{i_{s+1}}\ne 0\}$, $Q=\{\,i_1{\cdots}i_{s+2}\mid
i_1<{\cdots}<i_{s+2}, x^{(\gamma{-}\sum_{j=1}^{s+2}\varepsilon_{i_j})}\otimes
dx_{i_1}\wedge{\cdots}\wedge dx_{i_{s+2}}\neq0\}$.
Denote $p=\#P$, $q=\#Q$.
Order lexicographically the words in $P$ and $Q$ respectively in column to get two column vectors $P$, $Q$.
Write $X=(a_{i_1\cdots i_{s+1}})_{i_1\cdots
i_{s+1}\in P}$. Thereby, we express the system $({\diamond})$ of $q$ linear equations with $p$ indeterminates $a_{i_1\cdots
i_{s+1}}$ as a matrix equation
$AX=0$, where the coefficients matrix $A$ is of size $q\times p$.

When $\Omega^{(s)}_\gamma\ne 0$,
i.e., $k_\gamma\le s$: there exists a unique longest word $\jmath_1\cdots\jmath_{k_{\gamma}}$ such that each $\gamma_{\jmath_r}=m\ell$. By definition, $\jmath_1\cdots\jmath_{k_{\gamma}}$ must be a subword of any word $i_1\cdots i_{s+1}$ in $P$,
and each $\gamma_{i_j}\ne 0$. Now set $b=min\{\,i\mid\gamma_i\neq0, \ 1\leq i\leq n\,\}$.
Owing to the lexicographic order adopted in $X$, it is easy to see that there is a diagonal submatrix $\text{\rm diag}\{q^{-\varepsilon_b*\gamma},\cdots,q^{-\varepsilon_b*\gamma}\}$ with order
$\binom{n-k_\gamma-h_\gamma-1}{s+1-k_\gamma}$ in the top right corner of $A$, which is provided by the front
$\binom{n-k_\gamma-h_\gamma-1}{s+1-k_\gamma}$ equations
corresponding to those words $i_1\cdots i_{s+2}$ with the beginning letter $i_1=b$. Thus, $\text{\rm rank}\,A\ge \binom{n-k_\gamma-h_\gamma-1}{s+1-k_\gamma}$, and
$\dim \ker d^{s+1}|_{{\Omega^{(s+1)}_\gamma}}=\dim\Omega^{(s+1)}_\gamma-\text{\rm rank}\,A \leq
\binom{n-k_\gamma-h_\gamma}{s+1-k_\gamma}-\binom{n-k_\gamma-h_\gamma-1}{s+1-k_\gamma}=\binom{n-k_\gamma-h_\gamma-1}{s-k_\gamma}$.

Note that $\im d^s\subseteq
\ker d^{s+1}$ and
$d^s\Omega^{(s)}_\gamma\subseteq \Omega^{(s+1)}_\gamma$. By the inductive
hypothesis,
$\dim\im d^{s}|_{{\Omega^{(s)}_\gamma}}=\binom{n-k_\gamma-h_\gamma-1}{s-k_\gamma}$,
so $\dim\ker d^{s+1}|_{{\Omega^{(s+1)}_\gamma}}\ge
\binom{n-k_\gamma-h_\gamma-1}{s-k_\gamma}$.

Therefore, we get
$\dim\ker d^{s+1}|_{{\Omega^{(s+1)}_\gamma}}=\binom{n-k_\gamma-h_\gamma-1}{s-k_\gamma}
$ $=\dim\im d^s|_{\Omega^{(s)}_\gamma}$, and
\begin{gather*}
\ker d^{s+1}|_{{\Omega^{(s+1)}_\gamma}}=\im d^s|_{\Omega^{(s)}_\gamma},\\
\im d^{s+1}|_{{\Omega^{(s+1)}_\gamma}}=\binom{n{-}k_\gamma{-}h_\gamma}{s{+}1{-}k_\gamma}{-}\binom{n{-}k_\gamma{-}h_\gamma{-}1}{s{-}k_\gamma}
=\binom{n{-}k_\gamma{-}h_\gamma{-}1}{s{+}1{-}k_\gamma}=\text{\rm rank}\,A.
\end{gather*}

When $\Omega^{(s)}_\gamma=0$ but
$\Omega^{(s+1)}_\gamma\ne 0$, i.e., $k_\gamma=s{+}1$:
there are $s{+}1$ $\gamma_i's$ equal to $m\ell$ and $\dim \Omega^{(s+1)}_\gamma=1$. In this
case, set $\gamma_{i_1}=\gamma_{i_2}=\cdots=\gamma_{i_{s+1}}=m\ell$.
Then $\gamma=\sum^{s+1}_{j=1}m\ell\varepsilon_{i_j}$ with $1\leq
i_1<\cdots<i_{s+1}\leq n$, and
$$
\Omega^{(s+1)}_\gamma=\span_k\bigl\{\,
x^{(\sum^{s+1}_{j=1}(m\ell-1)\varepsilon_{i_j})}\otimes
dx_{i_1}\wedge\cdots \wedge dx_{i_{s+1}}\,\bigr\}=\ker
d^{s+1}|_{\Omega^{(s+1)}_\gamma}.
$$

This completes the proof.
\end{proof}

\noindent
{\it 4.4. Cohomology modules.} We will concern the module structure on $H^s(\Omega_q(n,\bold m))$.
\begin{defi}
Let $V(\epsilon_1,\cdots,\epsilon_{n-1})$ be a one-dimensional
$\mathfrak{u}_q(\mathfrak{sl}_n)$-module. It is called a
sign-trivial module if for  $0\neq v\in V(\epsilon_1,\cdots,\epsilon_{n-1})$, $e_i.\, v=f_i.\, v=0$
and $K_i.\, v=\epsilon_i v$, where $\epsilon_i=\pm1$, for $i=1,\cdots,n{-}1$.
\end{defi}

\begin{theorem} For any $s$ $(0\leq s\leq n)$,
each cohomology group $H^s(\Omega_q(n,\bold m))$ is isomorphic to the direct sum of $\binom{n}{s}$ $($sign-$)$trivial
$\mathfrak{u}_q(\mathfrak{sl}_n)$-modules when $q$ is an $\ell$-th $($resp.  $2\ell$-th but $m$ is odd\,$)$ root of unity or
$m$ is even.
\end{theorem}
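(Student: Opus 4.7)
My plan is to combine the weight-space structure already exposed in the proof of Theorem 4.6 with a direct check of the $\mathfrak{u}_q(\mathfrak{sl}_n)$-action on the explicit cocycle representatives. By Theorem 4.6 the classes $[w_I] := [x^{(\sum_{j\in I}(m\ell-1)\varepsilon_j)}\otimes dx_{i_1}\wedge\cdots\wedge dx_{i_s}]$, indexed by $I=\{i_1<\cdots<i_s\}\subseteq\{1,\ldots,n\}$, span $H^s(\Omega_q(n,\bold m))$ as a vector space. It thus suffices to show that each line $k\cdot[w_I]$ is stable under $\mathfrak{u}_q(\mathfrak{sl}_n)$ and that $e_i,f_i$ act as zero while $K_i$ acts as $\pm 1$.

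First I will compute the $K_i$-action. Since $\Delta(K_i)=K_i\otimes K_i$, a direct calculation using Proposition 2.4 (2.3) and the formula $K_i.\,dx_j=q^{\delta_{ij}-\delta_{i+1,j}}dx_j$ gives
$$K_i.\,w_I=q^{m\ell\bigl([i\in I]-[i+1\in I]\bigr)}w_I.$$
Because $q^{2\ell}=1$ in both the $\ell$-th and $2\ell$-th primitive root cases, $q^{m\ell}=\pm 1$ always: it equals $+1$ when $q^\ell=1$ (for any $m$) or when $q^\ell=-1$ and $m$ is even, and it equals $-1$ when $q^\ell=-1$ and $m$ is odd. This gives the trivial/sign-trivial dichotomy stated in the theorem.

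Next, using $\Delta(e_i)=e_i\otimes K_i+1\otimes e_i$, I will perform a four-case analysis on whether $i,i+1\in I$. In three of the four cases ($i,i+1\in I$; only $i\in I$; neither in $I$), both summands of $e_i.\,w_I$ vanish outright: either $[\alpha_i+1]=[m\ell]=0$ kills the first summand, $\alpha_{i+1}=0$ forces $x^{(\alpha+\varepsilon_i-\varepsilon_{i+1})}=0$, or $e_i$ simply cannot act on the wedge factor, and when $i\in I$ the identity $dx_i\wedge dx_i=0$ closes the loop. Only the case $i\notin I$, $i+1\in I$ produces a potentially nonzero element. The proof for $f_i$ (using $\Delta(f_i)=f_i\otimes 1+K_i^{-1}\otimes f_i$) is entirely parallel, with the roles of $i$ and $i+1$ swapped.

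The main (and really the only substantive) obstacle is handling the remaining case $i\notin I$, $i+1\in I$ for $e_i$ and its mirror image for $f_i$. Here I will argue cohomologically rather than by brute computation: write $\gamma=\sum_{j\in I}m\ell\,\varepsilon_j$ for the weight of $w_I$; then $e_i.\,w_I$ lies in the weight space $\Omega^{(s)}_{\gamma'}$ with $\gamma'=\gamma+\varepsilon_i-\varepsilon_{i+1}$, and one checks that $\gamma'$ has exactly $k_{\gamma'}=s-1$ coordinates equal to $m\ell$. Because $d^s$ is a $\mathfrak{u}_q(\mathfrak{sl}_n)$-module homomorphism (Proposition 4.1 extended to $d^\bullet$) and $w_I\in\ker d^s$, we have $d^s(e_i.\,w_I)=e_i.\,d^s(w_I)=0$; but in the proof of Theorem 4.6, Case (i), it was shown that when $k_{\gamma'}<s$ one has $\ker d^s|_{\Omega^{(s)}_{\gamma'}}=\operatorname{\im} d^{s-1}|_{\Omega^{(s-1)}_{\gamma'}}$. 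Hence $e_i.\,w_I\in\operatorname{\im} d^{s-1}$, so $e_i.\,[w_I]=0$ in $H^s(\Omega_q(n,\bold m))$, and symmetrically for $f_i$. Collecting these facts, each $k\cdot[w_I]$ is a one-dimensional $(\text{sign-})$trivial submodule, and $H^s(\Omega_q(n,\bold m))\cong\bigoplus_I k\cdot[w_I]$ is the claimed direct sum of $\binom{n}{s}$ such modules.
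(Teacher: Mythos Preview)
Your proof is correct and follows essentially the same approach as the paper's own argument: compute the $K_i$-action directly to see the $(\pm 1)$-eigenvalues, and for $e_i$ (resp.\ $f_i$) reduce to the single nontrivial case $i\notin I,\ i{+}1\in I$, where the image lands in a weight space $\Omega^{(s)}_{\gamma'}$ with $k_{\gamma'}=s{-}1$ and hence, by Case (i) of the proof of Theorem 4.6, is already exact. Your four-case breakdown is slightly more explicit than the paper's phrasing, but the substance is identical.
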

\begin{proof}
When $s=0$, the statement is clear.

It suffices to consider the cases when $1\leq s\leq n$.
By Theorem 4.6, we have
$$
H^s(\Omega_q(n,\bold m))
\cong\span_k\bigl\{
x^{(\sum^s_{j=1}(m\ell-1)\varepsilon_{i_j})}\otimes dx_{i_1}\wedge
\cdots\wedge dx_{i_s}\,\big|\,1\leq i_1<\cdots<i_s\leq
n\bigr\}.
$$

Denote by $[x^{(\sum^s_{j=1}(m\ell-1)\varepsilon_{i_j})}\otimes
dx_{i_1}\wedge {\cdots}\,\wedge dx_{i_s}]$, the image of
$x^{(\sum^s_{j=1}(m\ell-1)\varepsilon_{i_j})}\otimes dx_{i_1}\wedge
{\cdots}\,\wedge dx_{i_s}\in\ker d^s|_{\Omega_\gamma^{(s)}}$ in $H^s(\Omega_q(n,\bold m))$, where
$\gamma=\sum_{j=1}^s(m\ell)\varepsilon_{i_j}$.

Consider the actions of the generators $e_i,\, f_i,\, K_i,\,
K^{-1}_i$ $(1\le i\le n{-}1)$ of
$\mathfrak{u}_q(\mathfrak{sl}_n)$ on
$[\,x^{(\sum^s_{j=1}(m\ell-1)\varepsilon_{i_j})}\otimes dx_{i_1}\wedge
\cdots\wedge dx_{i_s}]$.

\medskip
(1) For any $e_h$: if
$e_h.\,(x^{(\sum^s_{j=1}(m\ell-1)\varepsilon_{i_j})}\otimes
dx_{i_1}\wedge {\cdots}\wedge dx_{i_s})\neq 0$,
then $h{+}1\in
\{\,i_1, \dots, i_s\}$ and $h\in\{\,i_1{-}1, \cdots, i_s{-}1\}-\{\,i_1, \cdots, i_s\}$. Write
$\gamma'=\sum^s_{j=1}(m\ell)\varepsilon_{i_j}{+}\varepsilon_h$ ${-}\varepsilon_{h+1}$, then
$e_h.\,(x^{(\sum^s_{j=1}(m\ell-1)\varepsilon_{i_j})}\otimes
dx_{i_1}\wedge \cdots\wedge dx_{i_s})\in \Omega^{(s)}_{\gamma'}\cap\ker d^s=\ker d^s|_{\Omega^{(s)}_{\gamma'}}$.

Since
$k_{\gamma'}=s{-}1$, $h_{\gamma'}=h_\gamma{-}1=n{-}s{-}1$, by Lemma 4.5,
$\dim\Omega^{(s-1)}_{\gamma'}=1$. So now the problem reduces to Case (i) in the proof of Theorem 4.6.
We then obtain
$\ker d^s|_{\Omega^{(s)}_{\gamma'}}=\im d^{s-1}|_{\Omega^{(s-1)}_{\gamma'}}\ne 0$.
Thus
$e_h.\,(x^{(\sum^s_{j=1}(m\ell-1)\varepsilon_{i_j})}\otimes
dx_{i_1}\wedge \cdots\wedge dx_{i_s})\in\im d^{s-1}$, namely,
$e_h.\,[\,x^{(\sum^s_{j=1}(m\ell-1)\varepsilon_{i_j})}\otimes
dx_{i_1}\wedge \cdots\wedge dx_{i_s}]=0$.

\smallskip
Therefore,  $e_h$ acts trivially on $H^s(\Omega_q(n,\bold m))$.

\medskip
(2) Dually, we can check that $f_h$ trivially acts on $H^s(\Omega_q(n,\bold m))$.
\medskip

(3) For $K_i^{\pm1}$: we have
\begin{equation*}
\begin{split}
K_i^{\pm1}&.\,[\,x^{(\sum^s_{j=1}(m\ell-1)\varepsilon_{i_j})}\otimes
dx_{i_1}\wedge \cdots\wedge
dx_{i_s}]\\
\quad&=q^{\pm(\gamma_i-\gamma_{i+1})}[\,x^{(\sum^s_{j=1}(m\ell-1)\varepsilon_{i_j})}\otimes
dx_{i_1}\wedge \cdots\wedge dx_{i_s}].
\end{split}
\end{equation*}

Notice that $\gamma_i-\gamma_{i+1}=\pm m\ell$ or $0$, for
$\gamma=\sum^s_{j=1}(m\ell)\varepsilon_{i_j}$, where
$r_i\in\mathbb{Z}$.

\smallskip
(i) When $q$ is the $\ell$-th primitive root of unity or $m$ is even,
$q^{\pm(\gamma_i-\gamma_{i+1})}=1$, the submodule generated by
$[\,x^{(\sum^s_{j=1}(m\ell-1)\varepsilon_{i_j})}\otimes dx_{i_1}\wedge
\cdots\wedge dx_{i_s}]$ is a trivial module.

\smallskip
(ii) When $q$ is the $2\ell$-th primitive root of unity but $m$ odd, $q^{\pm(\gamma_i-\gamma_{i+1})}=\pm1$, then the submodule
generated by $[\,x^{(\sum^s_{j=1}(m\ell-1)\varepsilon_{i_j})}\otimes
dx_{i_1}\wedge \cdots\wedge dx_{i_s}]$ is a sign-trivial module.

\smallskip
Hence, $H^s(\Omega_q(n,\bold m))$ is
isomorphic to the direct sum of $\binom{n}{s}$ sign-trivial
$\mathfrak{u}_q(\mathfrak{sl}_n)$-modules.
\end{proof}

\noindent
{\it 4.5. Quantum de Rham cohomologies $H^s(\Omega_q(n))$.} In this final subsection, we turn to give
a description of the cohomologies for the quantum de Rham complex $(\Omega_q(n), d^\bullet)$.
Actually, Lemma 4.5 and the result of Case (i) in the proof of Theorem 4.6 are still available to the
$(\Omega_q(n), d^\bullet)$.
\begin{prop}
For the quantum de Rham complex $(\Omega_q(n), d^\bullet)$ over $\mathcal{A}_q(n)$$:$
$$
0 {\longrightarrow}\Omega_q(n)^{(0)} \stackrel{d^0}
{\longrightarrow}
\cdots\stackrel{d^{s-1}}
{\longrightarrow}\Omega_q(n)^{(s)}\stackrel{d^s}
{\longrightarrow}\Omega_q(n)^{(s+1)}\stackrel{d^{s+1}}
{\longrightarrow}\cdots
\stackrel{d^{n-1}}
{\longrightarrow}\Omega_q(n)^{(n)}\stackrel{d^{n}}
{\longrightarrow}0,
$$
one has $H^s(\Omega_q(n))=\delta_{0,s}k$, for any $s=0, 1, \cdots, n$.
\end{prop}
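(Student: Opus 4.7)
The plan is to adapt the weight-by-weight argument from the proof of Theorem 4.6 to the non-truncated setting, and then observe that the absence of any upper bound on $\alpha$ eliminates all the ``Case (ii)'' weights that produced non-trivial cohomology in the truncated version. Concretely, each $d^s$ preserves the $\mathbb{Z}^n_+$-weight grading $\Omega_q(n)^{(s)}=\bigoplus_{\gamma\in\mathbb{Z}^n_+}\Omega^{(s)}_\gamma$, so $H^s(\Omega_q(n))=\bigoplus_{\gamma\in\mathbb{Z}^n_+}H^s(\Omega^{(\bullet)}_\gamma)$, and it suffices to analyse each weight separately.

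For $\gamma\in\mathbb{Z}^n_+$, the weight space $\Omega^{(s)}_\gamma$ has basis $\{\,x^{(\gamma-\sum_{j=1}^{s}\varepsilon_{i_j})}\otimes dx_{i_1}\wedge\cdots\wedge dx_{i_s}\,\}$ indexed by $s$-subsets of $\{\,i\mid\gamma_i>0\,\}$, hence has dimension $\binom{n-h_\gamma}{s}$, where $h_\gamma$ denotes the number of zero coordinates of $\gamma$. The essential difference from the truncated case is that no coordinate has a ``boundary'' value $\gamma_i=m\ell$: effectively the invariant $k_\gamma$ of Lemma 4.5 is identically zero. Consequently the dichotomy used in Theorem 4.6 collapses, and the only weight for which $\Omega^{(s-1)}_\gamma=0$ while $\Omega^{(s)}_\gamma\neq 0$ is $\gamma=0$ with $s=0$.

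For the weight $\gamma=0$, one has $\Omega^{(0)}_0=k\cdot 1$ and $\Omega^{(s)}_0=0$ for $s\ge 1$, contributing a one-dimensional summand $k$ to $H^0$. For each nonzero $\gamma$, I will verify exactness of $(\Omega^{(\bullet)}_\gamma, d^\bullet)$ by the induction used in Case (i) of the proof of Theorem 4.6, now specialised to $k_\gamma=0$. The base case reduces to $\ker d^0|_{\Omega^{(0)}_\gamma}=0$ for $\gamma\neq 0$, which is immediate from
\[
d^0x^{(\gamma)}=\sum_{i:\,\gamma_i>0}q^{-\varepsilon_i*\gamma}\,x^{(\gamma-\varepsilon_i)}\otimes dx_i\neq 0,
\]
since the summands are linearly independent basis elements. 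The inductive step reproduces verbatim the matrix argument of Theorem 4.6 with $b=\min\{\,i\mid\gamma_i\neq 0\,\}$: the same top-right diagonal block of size $\binom{n-h_\gamma-1}{s+1}$ in the coefficient matrix $A$ forces its rank to equal $\binom{n-h_\gamma-1}{s+1}$, whence $\dim\ker d^{s+1}|_{\Omega^{(s+1)}_\gamma}=\binom{n-h_\gamma-1}{s}=\dim\im d^s|_{\Omega^{(s)}_\gamma}$ by the inductive hypothesis, forcing $\ker d^{s+1}|_{\Omega^{(s+1)}_\gamma}=\im d^s|_{\Omega^{(s)}_\gamma}$.

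Combining the contributions, $H^0(\Omega_q(n))\cong k$ and $H^s(\Omega_q(n))=0$ for $s\ge 1$, which is the asserted identity $H^s(\Omega_q(n))=\delta_{0,s}k$. The substantive work has already been carried out inside Theorem 4.6; the only remaining task is to confirm that the rank-counting there carries over uniformly when $\gamma$ is allowed to be unbounded and $k_\gamma=0$, which is straightforward once $b=\min\{\,i\mid\gamma_i\neq 0\,\}$ is available, i.e.\ whenever $\gamma\neq 0$. This is where I expect the only real bookkeeping, though not a genuine obstacle.
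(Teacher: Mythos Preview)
Your proposal is correct and follows essentially the same approach as the paper: decompose by weight, observe that in the non-truncated setting every $\gamma\ne 0$ falls into Case~(i) of Theorem~4.6 (since effectively $k_\gamma=0$), and conclude exactness away from $\gamma=0$. The only cosmetic difference is that the paper, rather than re-running the inductive matrix argument, notes that for each fixed $\gamma\ne 0$ one may choose $m$ with $m\ell>|\gamma|$, so that $\Omega_q(n)^{(s)}_\gamma=\Omega_q(n,\bold m)^{(s)}_\gamma$ and $k_\gamma=0$, and then simply cites the already-proved Case~(i) of Theorem~4.6; this saves repeating the bookkeeping you outline.
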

\begin{proof} Clearly, we have $H^0(\Omega_q(n))=k$.

For any given $\gamma\in\mathbb Z_+^n$, since each $d^s$ preserves the weight-gradings, we have
$$
0 {\longrightarrow}\Omega_q(n)^{(0)}_\gamma \stackrel{d^0}
{\longrightarrow}
\cdots\stackrel{d^{s-1}}
{\longrightarrow}\Omega_q(n)^{(s)}_\gamma\stackrel{d^s}
{\longrightarrow}\Omega_q(n)^{(s+1)}_\gamma\stackrel{d^{s+1}}
{\longrightarrow}\cdots
\stackrel{d^{n-1}}
{\longrightarrow}\Omega_q(n)^{(n)}_\gamma\stackrel{d^{n}}
{\longrightarrow}0.
$$
By definition, $H^s(\Omega_q(n))=\bigoplus_{\gamma\in\mathbb Z_+^n}\ker d^s|_{\Omega_q(n)^{(s)}_\gamma}/\im d^{s-1}|_{\Omega_q(n)^{(s-1)}_\gamma}$. So, for the given $\bold 0<\gamma\in \mathbb Z_+^n$, there exists
an $m\in\mathbb N$, such that $m\ell>|\,\gamma\,|$. This means that
$\Omega_q(n)_\gamma^{(s)}=\Omega_q(n,\bold m)^{(s)}_\gamma$, for any $s\ge 1$, and $k_\gamma=0$.
So, Lemma 4.5 is adapted to our case, namely,
$\Omega_q(n)^{(s-1)}_\gamma=\Omega_q(n,\bold m)^{(s-1)}_\gamma\neq 0$, for $1\leq s\leq n$. According to the proof of Theorem 4.6, the result of Case (i) works, that is,
$\ker\,d^s|_{\Omega_q(n)^{(s)}_\gamma}=\im\, d^{s-1}|_{\Omega_q(n)^{(s-1)}_\gamma}$, for any given $\gamma$.
This implies $H^s(\Omega_q(n))=0$ for $s\ge 1$.
\end{proof}

\bibliographystyle{amsalpha}


\end{document}